\newcommand{\mc}[1]{\mathcal{#1}}
\newcommand{\mf}[1]{\mathfrak{#1}}
\newcommand{\ol}[1]{\overline{#1}}
\newcommand{\ti}[1]{\textit{#1}}
\newcommand{\tx}[1]{\textrm{#1}}
\newcommand{\N}{\mathbb{N}}
\newcommand{\R}{\mathbb{R}}
\newcommand{\Z}{\mathbb{Z}}
\newcommand{\dd}{d}
\newcommand{\ddt}{\frac{d}{dt}}
\newcommand{\del}{\delta}
\newcommand{\deta}{\,d\eta}
\newcommand{\ds}{\,ds}
\newcommand{\dx}{\,dx}
\newcommand{\dxi}{\,d\xi}
\newcommand{\dy}{\,dy}
\newcommand{\eps}{\epsilon}
\newcommand{\bigo}{O}
\newcommand{\fatp}{\widetilde{P}}
\newcommand{\ft}{\mc{F}}
\newcommand{\hkdv}{H_{\tx{KdV}}}
\newcommand{\I}{\mf{I}}
\newcommand{\mhi}{m_{\tx{hi}}}
\newcommand{\mlo}{m_{\tx{lo}}}
\newcommand{\norm}[1]{\left\lVert#1\right\rVert}
\newcommand{\op}{\tx{op}}
\newcommand{\pihi}{\Pi_{\geq N}}
\newcommand{\pilo}{\Pi_{< N}}
\newcommand{\snorm}[1]{\lVert#1\rVert}
\newcommand{\thk}{H^{W}_{\kappa}}
\newcommand{\thvk}{H^{W}_{\varkappa}}
\newcommand{\tr}{\operatorname{tr}}
\newcommand{\wh}[1]{\widehat{#1}}
\theoremstyle{plain}\newtheorem{thm}{Theorem}[section]
\theoremstyle{plain}\newtheorem{cor}[thm]{Corollary}
\theoremstyle{plain}\newtheorem{prop}[thm]{Proposition}
\theoremstyle{plain}\newtheorem{lem}[thm]{Lemma}
\theoremstyle{definition}
\theoremstyle{definition}
\theoremstyle{remark}
\theoremstyle{definition}\newtheorem*{ack}{Acknowledgments}
\crefname{sec}{Section}{Sections}
\crefname{dfn}{Definition}{Definitions}
\crefname{hyp}{Hypothesis}{Hypotheses}
\crefname{lem}{Lemma}{Lemmas}
\crefname{prop}{Proposition}{Propositions}
\crefname{thm}{Theorem}{Theorems}
\crefname{cor}{Corollary}{Corollaries}
\titleformat{\section}[block]{\normalfont\normalsize\scshape\filcenter}{\thesection.}{0.5em}{}
\titleformat{\subsection}[runin]{\normalfont}{\thesubsection.}{0.5em}{\bfseries}
\numberwithin{equation}{section}
\begin{document}

\title{{K}d{V} on an incoming tide}

\author{Thierry Laurens}
\address{Thierry Laurens \\
	Department of Mathematics\\
	University of California, Los Angeles, CA 90095, USA}
\email{laurenst@math.ucla.edu}

\maketitle

\begin{abstract}
	Given smooth step-like initial data $V(0,x)$ on the real line, we show that the Korteweg--de Vries equation is globally well-posed for initial data $u(0,x) \in V(0,x) + H^{-1}(\mathbb{R})$.  The proof uses our general well-posedness result from~\cite{Laurens2021}.
	
	As a prerequisite, we show that KdV is globally well-posed for $H^3(\mathbb{R})$ perturbations of step-like initial data.  In the case $V \equiv 0$, we obtain a new proof of the Bona--Smith theorem~\cite{Bona1975} using the low-regularity methods that established the sharp well-posedness of KdV in $H^{-1}$~\cite{Killip2019}.
\end{abstract}


\section{Introduction}

The Korteweg--de Vries (KdV) equation
\begin{equation}
\ddt u = - u''' + 6uu'
\label{eq:kdv}
\end{equation}
(where primes $u' = \partial_xu$ denote spatial differentiation) was proposed in~\cite{Korteweg1895} to describe the phenomena of solitary traveling waves (solitons) in shallow channels.  Since its introduction over a century ago, the KdV equation has been thoroughly studied on the line $\R$ and the circle $\R/\Z$ and has been shown to exhibit a multitude of special features.

A fundamental direction of investigation for KdV has been well-posedness in the $L^2$-based Sobolev spaces $H^s(\R)$ and $H^s(\R/\Z)$.  However, the derivative in the nonlinearity of KdV prevents straightforward contraction mapping arguments from closing, so preliminary results produced continuous dependence in a weaker norm than the space of initial data.  One of the first results to overcome this loss of derivatives phenomenon was obtained by Bona and Smith~\cite{Bona1975} who established global well-posedness in $H^3(\R)$.  Numerous methods were developed in the following decades in the effort to lower the regularity $s$; see for example~\cite{Bona1976,Kato1975,Saut1976,Temam1969,Tsutsumi1971,Kenig1991,Bourgain1993,Christ2003,Kenig1996,Colliander2003,Guo2009,Kishimoto2009}.  Recently, Killip and Vi\c{s}an~\cite{Killip2019} introduced the method of commuting flows to demonstrate global well-posedness in $H^{-1}(\R)$ and $H^{-1}(\R/\Z)$, a result that is sharp in both topologies.  In the $\R/\Z$ case, this result was already known~\cite{Kappeler2006}.

Solutions in $H^s(\R/\Z)$ spaces are spatially periodic and solutions in $H^s(\R)$ spaces decay at infinity.  However, there are other classes of initial data which are of physical interest.  In particular, waveforms that are step-like---in the sense that $u(0,x)$ asymptotically approaches distinct constant values as $x\to \pm\infty$---arise naturally in the study of bore propagation and rarefaction waves.  Such asymptotic behavior has real physical consequences.  Indeed, we shall see below that the polynomial conservation laws are broken, and in the case of an incoming tide there is an infinite influx of energy into the system.

Our objective in this paper is to extend low-regularity methods for well-posedness to the regime of nonzero spatial asymptotics.  We define the smooth step function
\begin{equation*}
W(x) = c_1 \tanh(x) + c_2 \quad \tx{with }c_1,c_2\in\R\tx{ fixed},
\end{equation*}
which exponentially decays to its asymptotic values.  As $-u$ is proportional to the water wave height, $W$ models an incoming tide if $c_1 > 0$ and an outgoing tide if $c_1 < 0$.  In fact, we can always perform a boost to prescribe $c_2$ courtesy of the Galilean symmetries of KdV~\eqref{eq:kdv}, but we will not make use of this.

A classical result in the study of step-like asymptotics is:
\begin{thm}
	\label{thm:intro H3}
	Fix an integer $s\geq 3$.  The KdV equation~\eqref{eq:kdv} with initial data $u(0) \in W + H^s(\R)$ is globally well-posed in the following sense:	$u(t) = W + q(t)$ where $q(t)$ is the global solution to
	\begin{equation}
	\ddt q = -(q+W)''' + 6(q+W)(q+W)'
	\label{eq:tkdv}
	\end{equation}	
	with initial data $q(0) = u(0) - W$ in $H^s(\R)$.  Moreover, $q(t)$ is in $C_tH^s([-T,T]\times\R)$ for all $T>0$, $q(t)$ is unique in this class, and $q(t)$ depends continuously upon the initial data $q(0)$ in $H^s(\R)$.
\end{thm}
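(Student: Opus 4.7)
The plan is to treat \eqref{eq:tkdv} as a forced perturbation of KdV. Expanding the nonlinearity, the equation for $q = u - W$ reads
\begin{equation*}
\ddt q = -q''' + 6qq' + 6(Wq)' + F, \qquad F \coloneqq -W''' + 6WW',
\end{equation*}
where $W \in C^\infty_b(\R)$, the derivatives $W', W'', \ldots$ are Schwartz, and consequently $F \in \mathcal{S}(\R)$. The extra terms beyond KdV are therefore a first-order linear operator with smooth bounded coefficients plus a fixed smooth source of rapid decay.

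First I would establish local well-posedness in $H^s$ for each integer $s\geq 3$ via a Bona--Smith mollifier iteration \cite{Bona1975}. Regularize the datum to $q_n(0) = J_{1/n}q(0)$, solve the resulting smoothed problems by Picard iteration on $H^s$, and derive $n$-uniform short-time bounds from the standard energy identity at order $s$: pairing $\partial_x^s q$ with $\partial_x^s$ of the equation, the top-order dispersive term vanishes after integration by parts, the cubic contribution reduces to $-\tfrac{1}{2}\int q'(\partial_x^s q)^2\,dx$ plus commutators (controlled via Kato--Ponce by $\|q\|_{H^s}^3$), the new linear contribution from $6(Wq)'$ is bounded by $C(\|W\|_{C^{s+1}})\|q\|_{H^s}^2$, and the source contributes at most $\|F\|_{H^s}\|q\|_{H^s}$. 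Gronwall then yields an $H^s$ bound and an existence time depending only on $\|q(0)\|_{H^s}$ and on $W$.

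For global extension I would propagate an a priori $H^s$ bound on arbitrary intervals $[-T,T]$ via renormalized polynomial conservation laws. The key observation is that $u = q+W$ formally satisfies the KdV equation \eqref{eq:kdv}, so subtracting the $k$-th KdV density evaluated at $W$ produces a functional $\widetilde{E}_k[q]$ whose integrand decays (since $q$ and its derivatives do) and whose time derivative is a boundary flux at $\pm\infty$ that depends only on the asymptotic constants $c_1, c_2$ and thus grows at most linearly in $t$. Combining $\widetilde{E}_0, \ldots, \widetilde{E}_{s+1}$ with Gagliardo--Nirenberg inequalities controls $\|q(t)\|_{H^s}$ by a constant $C(T, \|q(0)\|_{H^s}, W)$ on $[-T,T]$, ruling out finite-time blow-up. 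Uniqueness then follows from an $L^2$ estimate on the difference of two solutions, in which the nonlinear, linear, and source contributions are all controlled by $\|q_1\|_{L^\infty} + \|q_2\|_{L^\infty} + \|W\|_{W^{1,\infty}}$.

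The main obstacle I anticipate is continuous dependence in $H^s$ itself, not merely in a weaker topology. For this I would apply the Bona--Smith regularization trick: given $q_n(0) \to q(0)$ in $H^s$, mollify each datum at an adaptively chosen scale $\eps_n \to 0$, exploit the higher-regularity a priori bounds (valid because $W$ is smooth) together with sharp approximation estimates for the mollifier, and show that the resulting smooth solutions converge in $C_tH^s$ on every compact time interval. The extra linear and forcing terms remain benign at this step since $W$ is smooth with bounded derivatives and $F$ is Schwartz; the classical Bona--Smith difference estimates carry through with constants additionally depending on $\|W\|_{C^{s+1}}$ and $\|F\|_{H^s}$.
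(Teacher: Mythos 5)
Your route (Bona--Smith mollification plus energy estimates) is genuinely different from the paper's, which instead approximates tidal KdV by the commuting tidal $H_\kappa$ flows precisely so that the same scheme can later be pushed down to $H^{-1}$; the paper explicitly concedes that the original Bona--Smith argument would prove this theorem. However, your global-existence step has a real gap. The functionals $\widetilde{E}_k[q]$ obtained by ``subtracting the $k$-th KdV density evaluated at $W$'' are not well-defined on $H^s(\R)$, and are not coercive even where they are defined. Already at $k=0$ the renormalized density is $\tfrac12(q+W)^2-\tfrac12W^2=\tfrac12 q^2+Wq$, and at $k=1$ one picks up $3qW^2$: these cross terms are linear in $q$ with \emph{undifferentiated} factors of $W$, so they decay only as fast as $q$ itself, i.e.\ they lie in $L^2$ but not in $L^1$ (take $q$ a smoothed version of $\langle x\rangle^{-0.6}$). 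So the integral diverges for generic $q\in H^s$, and even for Schwartz $q$ the term $\int Wq\,dx$ can be arbitrarily large and negative relative to $\|q\|_{L^2}^2$, destroying coercivity. Your claim that ``the integrand decays (since $q$ and its derivatives do)'' conflates $L^2$ decay with integrability. The fix is the one the paper implements (in its Propositions on \emph{a priori} bounds, read at $\kappa=\infty$): use the KdV polynomial conservation laws evaluated at $q$ alone, $E_0=\tfrac12\int q^2$, $E_1=\int(\tfrac12(q')^2+q^3)$, etc., accept that they are not conserved, and estimate $\tfrac{d}{dt}E_k$ directly under the flow \eqref{eq:tkdv}; every error term then carries either a factor of the Schwartz function $W'$ or at least two factors of $q$, so it is integrable and Gr\"onwall closes with $|\tfrac{d}{dt}E_k|\lesssim E_k+1$.

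Two smaller points. First, ``solve the resulting smoothed problems by Picard iteration on $H^s$'' does not work as stated: the derivative in the nonlinearity prevents the contraction from closing even for smooth data (this is the whole reason Bona--Smith introduced the BBM regularization, and the reason the paper works with the $H_\kappa$ flows, for which local well-posedness \emph{is} an ODE argument). You need to regularize the equation as well as the data, or run an ODE argument in a scale of spaces with a uniform existence time. Second, your uniqueness and continuous-dependence steps are sound in outline --- the $L^2$ difference estimate matches the paper's uniqueness lemma, and the Bona--Smith trick for $H^s$ continuity of the data-to-solution map does tolerate the extra terms $6(Wq)'$ and the Schwartz forcing --- but both rest on the global \emph{a priori} bound, so the gap above must be repaired first.
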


\Cref{thm:intro H3} is not new (as we will discuss below), but we will use its statement to formulate our main result.  Applying \cref{thm:intro H3} to the initial data $q(0) \equiv 0$, we conclude that given $W$ there is a unique global solution $V(t) = W + q(t)$ to KdV~\eqref{eq:kdv} with initial data $W$, and $t\mapsto V(t) - W$ is a continuous function into $H^s(\R)$ for all $s\geq 3$.  The main thrust of this work is to show that KdV is globally well-posed for $H^{-1}(\R)$ perturbations of $V(t)$:
\begin{thm}
	\label{thm:intro H-1}
	The KdV equation~\eqref{eq:kdv} with initial data $u(0) \in W + H^{-1}(\R)$ is globally well-posed in the following sense: $u(t) = V(t) + q(t)$ where $V(t)$ solves KdV with initial data $W$ and the equation
	\begin{equation}
	\ddt q = - q''' + 6qq' + 6(Vq)'
	\label{eq:kdv with potential}
	\end{equation}
	for $q(t)$ with initial data in $H^{-1}(\R)$ is globally well-posed.
\end{thm}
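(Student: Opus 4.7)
The proof consists of applying the general well-posedness framework of~\cite{Laurens2021} to the perturbation equation~\eqref{eq:kdv with potential}, with the smooth reference solution $V(t)$ supplied by~\Cref{thm:intro H3}.

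To construct $V(t)$, I would iterate~\Cref{thm:intro H3} over integers $s \geq 3$, each time taking $q(0) \equiv 0 \in H^s(\R)$. Uniqueness forces all of these solutions to coincide, yielding a single global solution $V(t) = W + q_W(t)$ of KdV with $q_W \in C_t H^s([-T,T] \times \R)$ for every $s \geq 3$ and every $T > 0$. Thus $V(t)$ is smooth in $x$ with Schwartz-type decay of $V(t) - W$, and $V(t)$ inherits from $W$ the exponential approach to two distinct constants at $\pm\infty$. These are precisely the properties a background must possess for the perturbative analysis to be carried out.

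Writing $u = V + q$ and using that $V$ solves KdV, the difference $q$ satisfies~\eqref{eq:kdv with potential} identically. The general well-posedness result of~\cite{Laurens2021} establishes global well-posedness in $H^{-1}(\R)$ for equations of this form, provided the background satisfies quantitative smoothness and spatial-decay hypotheses. The proof therefore reduces to verifying that the $V(t)$ built above meets those hypotheses, which follows from the regularity of $q_W$ together with the explicit form of $W$.

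The substantive obstacle lies inside the machinery of~\cite{Laurens2021}. The low-regularity method of~\cite{Killip2019} rests on the diagonal Green's function $g(x;\kappa)$ of $-\partial_x^2 + u$ and the renormalized trace $\alpha(\kappa;u)$, neither of which is a priori defined when $u$ fails to decay at infinity. Adapting this machinery to step-like backgrounds requires replacing $\alpha(\kappa;u)$ by \emph{relative} functionals such as $\alpha(\kappa; V+q) - \alpha(\kappa; V)$, and showing that these are well-defined on $V + H^{-1}(\R)$, that they control the $H^{-1}$ norm of $q$ uniformly on compact time intervals, and that their time derivatives---which are nonzero because $V$ is not stationary---can be estimated in terms of quantities that do not destroy the equicontinuity. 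Once these analytic estimates are in place, the remaining structure is standard: approximate $q(0) \in H^{-1}$ by Schwartz data, solve classically via~\Cref{thm:intro H3}, derive the uniform equicontinuity from the relative conservation law, and pass to a limit using the commuting-flows approximation of~\cite{Killip2019}.
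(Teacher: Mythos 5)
Your overall architecture matches the paper's: build $V(t)=W+q_W(t)$ by applying \cref{thm:intro H3} to $q(0)\equiv 0$, write $u=V+q$, and invoke the general well-posedness result of~\cite{Laurens2021} for equation~\eqref{eq:kdv with potential}. However, there is a genuine gap in the step where you claim that verifying the hypotheses of~\cite{Laurens2021} ``follows from the regularity of $q_W$ together with the explicit form of $W$.'' The hypotheses are not merely smoothness and decay conditions on $V$ itself. They are: (i) $V$ solves KdV and is bounded in $W^{2,\infty}$ on $|t|\leq T$; (ii) the solutions $V_\kappa(t)$ of the \emph{commuting $H_\kappa$ flows} with initial data $V(0)$ are bounded in $W^{4,\infty}$ uniformly in $|t|\leq T$ and large $\kappa$; and (iii) $V_\kappa\to V$ in $W^{2,\infty}$ uniformly in $t$. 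Conditions (ii) and (iii) concern a whole family of auxiliary evolutions of the step-like datum $W$, and no amount of regularity of $V$ alone yields them. Verifying them is precisely what requires the tidal $H_\kappa$ machinery of the paper: global well-posedness of the tidal $H_\kappa$ flow (\cref{thm:thk gwp}), the uniform \emph{a priori} bounds in $H^5$ (\cref{thm:a priori Hs}), the convergence $q_\kappa\to q$ in $C_tH^5$ (\cref{thm:conv}), and the identification of the limit as a tidal KdV solution (\cref{thm:tkdv exist}). This is the actual content of the proof, and your proposal skips it.

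Relatedly, the ``substantive obstacle'' you locate --- defining relative functionals $\alpha(\kappa;V+q)-\alpha(\kappa;V)$ on $V+H^{-1}$, proving coercivity and equicontinuity, and running the commuting-flows limit at $H^{-1}$ regularity --- is the content of the cited black box~\cite{Laurens2021}, which this paper does not reprove. What this paper must supply, and what your write-up omits, is the input to that black box: control of the $H_\kappa$ flows emanating from the step-like profile. One further small point: you do not need uniqueness across all $s\geq 3$ to build $V$; the paper simply takes $q(0)\equiv 0\in H^5$ and uses the $C_tH^5$ theory, which suffices for the $W^{4,\infty}$ bounds via Sobolev embedding.
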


Let us clarify the notion of well-posedness in \cref{thm:intro H-1}.   As we cannot make sense of the nonlinearity of KdV for arbitrary functions in $H^{-1}(\R)$ (even in the distributional sense), the solutions in \cref{thm:intro H-1} are constructed as limits of solutions to a family of approximate equations.  We then show that the data-to-solution map $q(0)\mapsto q(t)$ is a jointly continuous function of $t\in\R$ and $q(0)\in H^{-1}(\R)$ into $H^{-1}(\R)$. The notions of solution and uniqueness is that for the dense subset $H^3(\R)$ of initial data $q(0)$ the functions $q(t)$ coincide with classical solutions (cf.~\cite[Th.~1.3]{Laurens2021}) and the data-to-solution map is continuous.

The proof of \cref{thm:intro H-1} relies on our general well-posedness result~\cite{Laurens2021}, which proves that the equation~\eqref{eq:kdv with potential} is well-posed in $H^{-1}(\R)$ provided that the background wave $V(t)$ satisfies certain criteria (which we will formulate below).  Verifying that $V(t)$ satisfies these criteria for the step-like initial data $W$ will be accomplished by certain ingredients in the proof of \cref{thm:intro H3}, namely \cref{thm:thk gwp,thm:conv,thm:tkdv exist}.

It is natural to ask whether KdV is also well-posed for $H^{-1}(\R)$ perturbations of $W$.  \Cref{thm:intro H-1,,thm:intro H3} provide an affirmative answer to this question.  By \cref{thm:intro H-1}, there exists a solution $u(t) = V(t) + q(t)$ to KdV~\eqref{eq:kdv} with initial data $u(0) = W + q(0)$ in $W + H^{-1}(\R)$.  Together with \cref{thm:intro H3}, we also obtain that $t\mapsto u(t) - W$ is a continuous function into $H^{-1}(\R)$ that depends continuously upon the initial data.  For a precise statement of this well-posedness, see \cref{thm:intro H-1 2}.  We do not use this formulation in the statement of \cref{thm:intro H-1} because it does not reflect the reality of the proof.

Just as $H^{-1}(\R)$ is the lowest regularity for which we can hope to have well-posedness in the case $W\equiv 0$~\cite{Molinet2011}, we expect that \cref{thm:intro H-1} is sharp in the class of $H^s(\R)$ spaces.  There is a known technique~\cite[Cor.~5.3]{Killip2019} for extending $H^{-1}(\R)$ well-posedness to $H^s(\R)$, $s> -1$, using equicontinuity, and so $H^{-1}(\R)$ is the key space for establishing well-posedness.  

Next we turn our attention to a discussion of prior work.  In~\cite[\S3]{Benjamin1972}, Benjamin, Bona, and Mahony discuss well-posedness for the (closely related) BBM equation with step-like initial data.  In the case $W\equiv 0$, Bona and Smith~\cite{Bona1975} proved that KdV is well-posed in $H^s(\R)$ for $s\geq 3$ by approximating KdV by a family of BBM equations.  The formulation of \cref{thm:intro H3} is inspired by~\cite{Bona1975}; indeed, \cref{thm:intro H3} can be proved using their original argument.  However, it is our proof of \cref{thm:intro H3}, not the formulation, that we need as an ingredient for \cref{thm:intro H-1}.  In contrast to the Bona--Smith approach, we approximate KdV by a family of commuting flows introduced by Killip and Vi\c{s}an~\cite{Killip2019}.  This has the advantage that the \ti{a priori} estimates are the same as those for KdV, and convergence can be demonstrated in a transparent way (by upgrading continuity in a lower regularity norm using equicontinuity).

Lower regularity than $H^3(\R)$ has been obtained in the study of well-posedness for perturbations of a fixed step-like background wave.  The first result was recorded in~\cite{Iorio1998}, who proved local well-posedness for perturbations in $H^s(\R)$, $s>\tfrac{3}{2}$, and global well-posedness for $s\geq 2$.  Local well-posedness was then extended to $s>1$ in~\cite{Gallo2005} for the same family of background waves.  Independently, local well-posedness for $H^2(\R)$ perturbations was proved for gKdV in~\cite{Zhidkov2001}, along with global-in-time existence in the case of a kink solution background wave and initial data that is small in $H^{1}(\R)$.

Subsequent to our work, a new result~\cite{Palacios2021} for gKdV demonstrates local well-posedness for perturbations in $H^s(\R)$, $s>\tfrac{1}{2}$ and global well-posedness for $s\geq 1$.  In addition to a larger class of equations, this work also applies to a wide variety of background waves, including both step-like and periodic asymptotics.  In particular, the background wave is not assumed to be time-independent nor an exact solution, but rather is allowed to solve the equation modulo a localized error term.

The primary tool used in the literature to study step-like solutions of KdV has been the inverse scattering transform.  In the case of a highly regular step-like background, existence for the Cauchy problem has been examined in~\cite{Buslaev1962,Cohen1984,Kappeler1986,Cohen1987,Egorova2009,Egorova2011}.  In order to employ the inverse scattering transform these results assume that $u(0)-W$ is integrable against $1+|x|^N$ for some $N\geq 1$, and consequently such methods are not suitable for $H^s(\R)$ spaces.  Nevertheless, as shown in~\cite{Egorova2009}, these methods do yield existence for Schwartz class perturbations.  Classes of one-sided step-like initial data were treated in~\cite{Rybkin2011,Grudsky2014,Rybkin2018} and one-sided step-like elements of $H^{-1}_{\tx{loc}}(\R)$ were treated in~\cite{Grudsky2015}.  Despite the lack of assumptions at $-\infty$ (the direction in which radiation propagates), these low-regularity arguments require rapid decay at $+\infty$ and global boundedness from below.  By comparison, our argument is symmetric in $\pm x$ and in $\pm u$.  

The inverse scattering transform is also used to study the long-time behavior of such solutions; see for example~\cite{Hruslov1976,Kotlyarov1986,Kotlyarov1986a,Bikbaev1989,Bikbaev1989a,Bikbaev1989b,Khruslov1994,Khruslov1998,Baranetskiui2001,Novokshenov2003,Egorova2013,Andreiev2016}.  The asymptotics are spatially asymmetric and differ in the cases of tidal bores and rarefaction waves.

In this paper, we employ the method of commuting flows introduced in~\cite{Killip2019}.  This method was used to prove both symplectic non-squeezing~\cite{Ntekoume2019} and invariance of white noise~\cite{Killip2020} for KdV on the line.  The method of commuting flows has also been adapted to other completely integrable systems, including the cubic NLS and mKdV equations~\cite{HarropGriffiths2020}, the fifth-order KdV equation~\cite{Bringmann2019}, and the derivative NLS equation~\cite{Killip2021}.  Together with~\cite{Laurens2021}, this is the first application of this method to exotic spatial asymptotics.

The presence of the background wave $W$ breaks the macroscopic conservation laws of KdV.  A solution of KdV~\eqref{eq:kdv} must obey the microscopic conservation law
\begin{equation*}
\ddt \left( \tfrac{1}{2} u^2 \right)
= \left[ -uu'' + \tfrac{1}{2} (u')^2 + 2u^3 \right]' .
\end{equation*}
For Schwartz solutions $u$ to KdV, integrating in space yields (macroscopic) conservation of the momentum
\begin{equation}
P(u) := \tfrac{1}{2} \int u(x)^2\dx .
\label{eq:momentum}
\end{equation}
However, if merely $u-W$ is Schwartz then we obtain
\begin{equation}
\ddt \int \tfrac{1}{2} \left[ u(t,x)^2 - u(0,x)^2 \right] \dx
= 2 W(x)^3 \bigg|_{x=-\infty}^{x=+\infty} .
\label{eq:intro momentum dot}
\end{equation}
In the case $c_1>0$, $c_2 = 0$ of an incoming tide, the RHS is equal to $4c_1^3>0$.  The momentum's growth is manifested in a dispersive shock that develops in the long-time asymptotics~\cite[Fig.~1]{Egorova2013}.

Interpreting $W$ as an incoming or outgoing tide, we will refer to~\eqref{eq:tkdv} as \ti{tidal KdV}.  To prove \cref{thm:intro H3} we will show that tidal KdV is well-posed in $H^s(\R)$ for $s\geq 3$.  Computations similar to~\eqref{eq:intro momentum dot} show that the presence of $W$ in tidal KdV breaks all of the polynomial conservation laws of KdV.  Despite this, we are able to adapt the method of commuting flows to tidal KdV because these conserved quantities do not blow up in finite time.

In order to introduce our methods, we will first present some notation.  The KdV equation~\eqref{eq:kdv} is governed by the Hamiltonian functional
\begin{equation}
	\hkdv(q) := \int \big( \tfrac{1}{2} q'(x)^2 + q(x)^3 \big) \dx
	\label{eq:hkdv}
\end{equation}
via the Poisson structure
\begin{equation*}
	\{ F,G \} = \int \frac{\del F}{\del q}(x) \bigg( \frac{\del G}{\del q} \bigg)'(x) \dx .
\end{equation*}
Here we are using the notation
\begin{equation*}
	dF|_q(f)
	= \frac{d}{ds} \bigg|_{s=0} F(q+sf)
	= \int \frac{\del F}{\del q}(x) f(x) \dx
\end{equation*}
for the derivative of the functional $F(q)$.  This Poisson structure is the bracket associated to the almost complex structure $J := \partial_x$ and the $L^2$ pairing.  In accordance with its name, the momentum functional~\eqref{eq:momentum} generates translations under this structure.

Our analysis will not rely upon these concepts, but we will borrow the convenient notations
\begin{equation*}
	q(t) = e^{t J \nabla H} q(0) \quad\tx{for the solution to}\quad \frac{d q}{d t} = \partial_{x} \frac{\delta H}{\delta q} ,
\end{equation*}
and
\begin{equation*}
	\ddt F \circ e^{tJ\nabla H} = \{ F,H \} \circ e^{tJ\nabla H}
	\quad\tx{for the quantity }F(q)\tx{ with }q(t) = e^{t J \nabla H} q(0) .
\end{equation*}

In the case $W\equiv 0$, the authors of~\cite{Killip2019} introduced a family of commuting flows that approximate that of KdV.  This approximation relies on the existence of a generating function $\alpha(\kappa,q)$ for the KdV hierarchy of conserved quantities with the asymptotic expansion 
\begin{equation}
	\alpha(\kappa , q) = \frac{1}{4 \kappa^{3}} P(q) - \frac{1}{16 \kappa^{5}} \hkdv(q) + \bigo(\kappa^{-7})
	\label{eq:alpha intro}
\end{equation}
for Schwartz $q$.  Here $P$ and $\hkdv$ are the momentum and KdV energy functionals~\eqref{eq:momentum} and~\eqref{eq:hkdv} respectively.  The quantity $\alpha(\kappa, q)$ is a renormalized logarithm of the transmission coefficient for the Schr\"odinger operator with potential $q$ (i.e. perturbation determinant) at energy $-\kappa^2$, and is a real analytic functional of $q$ in a neighborhood of the origin in $H^{-1}(\R)$ for all $\kappa \geq 1$.

Rearranging the expansion~\eqref{eq:alpha intro}, the authors of~\cite{Killip2019} introduced the Hamiltonians
\begin{equation}
	H_\kappa(q) := -16\kappa^5 \alpha(\kappa,q) + 4\kappa^2 P(q)
	\label{eq:hk intro}
\end{equation}
and showed that their flow converges to that of KdV in $H^{-1}(\R)$ as $\kappa\to\infty$.  The $H_\kappa$ flows are easier to work with, as well-posedness follows from straightforward ODE arguments.  Moreover, two flows with different energy parameters $\kappa$ commute with one another, which facilitates the demonstration of convergence as $\kappa\to\infty$.

Our general result~\cite{Laurens2021} is that the equation~\eqref{eq:kdv with potential} is well-posed in $H^{-1}(\R)$ provided that for every $T>0$ the background wave $V : \R\times\R \to \R$ satisfies the following:
\begin{enumerate}[label=(\roman*)]
	\item $V$ solves KdV~\eqref{eq:kdv} and is bounded in $W^{2,\infty}(\R_x)$ uniformly for $|t|\leq T$,
	\item The solutions $V_\kappa(t)$ to the $H_\kappa$ flows with initial data $V(0)$ are bounded in $W^{4,\infty}(\R_x)$ uniformly for $|t|\leq T$ and $\kappa>0$ sufficiently large,
	\item $V_\kappa - V\to 0$ in $W^{2,\infty}(\R_x)$ as $\kappa\to\infty$ uniformly for $|t|\leq T$ and initial data in the set $\{V_\varkappa(t) : |t|\leq T,\ \varkappa \geq \kappa \}$.
\end{enumerate}
To prove \cref{thm:intro H-1} we need to study the $H_\kappa$ flows $V_\kappa(t)$ for step-like initial data $W$.  After subtracting the background profile $W$, this is tantamount to showing that the method of commuting flows can be applied to tidal KdV~\eqref{eq:tkdv}.

As the $H_\kappa$ flows approximate KdV, we will need to construct analogous approximate equations for tidal KdV~\eqref{eq:tkdv}.  Just as how we obtained tidal KdV from KdV, we subtract the background wave $W$ from $u$ to obtain the tidal $H_\kappa$ flow for $q = u - W$ with Hamiltonian $\thk$:
\begin{equation*}
e^{tJ\nabla \thk} q = e^{tJ\nabla H_\kappa} (q+W) - W .
\end{equation*}
This tidal $H_\kappa$ flow is indeed Hamiltonian, but we will not need the formula for the Hamiltonian; we only formally introduce $\thk$ so that we have a succinct notation for its flow.  In proving \cref{thm:intro H-1,thm:intro H3}, we will show that the $\thk$ flow is well-posed in $H^s(\R)$ for $s\geq 3$, commutes with any other $\thvk$ flow, and converges to tidal KdV in $H^s(\R)$ as $\kappa \to \infty$ uniformly on bounded time intervals.

This paper is organized as follows.  In \cref{sec:prelim} we define the diagonal Green's function $g$ for perturbations $q\in H^{-1}(\R)$ of the background $W$ which we will use to formulate the tidal $H_\kappa$ flow.  In \cref{sec:thk} we prove \ti{a priori} estimates and global well-posedness for the tidal $H_\kappa$ flow.  As a stepping stone to convergence in $H^s$ norm, we prove in \cref{sec:low reg} that the tidal $H_\kappa$ flow converges in the weaker $H^{-2}$ norm.  The entirety of \cref{sec:equicty} is dedicated to controlling the Fourier tail growth in time.  We then combine the low-regularity convergence and Fourier tail control in \cref{sec:wp} to obtain convergence in $H^s$ norm and conclude our main result.

\begin{ack}
	I was supported in part by NSF grants DMS-1856755 and DMS-1763074.  I would also like to thank my advisors, Rowan Killip and Monica Vi\c{s}an, for their guidance.
\end{ack}

\section{Diagonal Green's function}
\label{sec:prelim}

We begin by reviewing our notation and the necessary tools from~\cite{Killip2019}, which can be consulted for further details.

For a Sobolev space $W^{k,p}(\R)$ we use the spacetime norm
\begin{equation*}
	\norm{ q }_{C_tW^{k,p}(I\times\R)} := \sup_{t\in I} \norm{ q(t) }_{W^{k,p}(\R)}
\end{equation*}
for $I\subset\R$ an interval.  In addition to the usual Sobolev spaces $W^{k,p}$ and $H^s$ we define the norm
\begin{equation}
	\norm{f}^2_{H^s_\kappa(\R)} := \int_\R (\xi^2+4\kappa^2)^s |\hat{f}(\xi)|^2\dxi ,
	\label{eq:Hskappa norm dfn}
\end{equation}
where our convention for the Fourier transform is
\begin{equation*}
	\hat{f}(\xi) = \frac{1}{\sqrt{2\pi}} \int_{\R} e^{-i\xi x}f(x)\dx ,\qquad
	\snorm{\hat{f}}_{L^2} = \norm{f}_{L^2} .
\end{equation*}
In analogy with the usual $H^s$ spaces, we have the elementary facts
\begin{equation}
	\norm{ w f }_{H_{\kappa}^{\pm 1}} \lesssim \norm{ w }_{W^{1,\infty}} \norm{f}_{H_{\kappa}^{\pm 1}} ,  \qquad
	\norm{ w f }_{H_{\kappa}^{\pm 1}} \lesssim \norm{ w }_{H^{1}} \norm{ f }_{H_{\kappa}^{\pm 1}}
	\label{eq:hskappa ests}
\end{equation}
uniformly for $\kappa \geq 1$.  We will exclusively use the $L^2$ pairing $\langle\cdot,\cdot\rangle$; the space $H_{\kappa}^{-1}$ is dual to $H_{\kappa}^{1}$ with respect to this pairing, and so the inequalities~\eqref{eq:hskappa ests} for $H_{\kappa}^{-1}$ are implied by those for $H_{\kappa}^{1}$.

We write $\I_p$ for the Schatten classes (also called trace ideals) of compact operators on the Hilbert space $L^2(\R)$ whose singular values are $\ell^p$-summable.  Of particular importance will be the Hilbert--Schmidt class $\I_2$: recall that an operator $A$ on $L^2(\R)$ is Hilbert--Schmidt if and only if it admits an integral kernel $a(x,y) \in L^2(\R\times\R)$, and we have
\begin{equation*}
	\norm{ A }_\op \leq \norm{ A}_{\I_2} = \iint |a(x,y)|^2 \dx\dy .
\end{equation*}
The product of two Hilbert--Schmidt operators $A$ and $B$ is of trace class $\I_1$, the trace is cyclic:
\begin{equation*}
	\tr(AB) := \iint a(x,y)b(y,x) \dy\dx = \tr(BA) ,
\end{equation*}
and we have the estimate
\begin{equation*}
	|\tr(AB)| \leq \norm{A}_{\I_2} \norm{B}_{\I_2} .
\end{equation*}
Additionally, Hilbert--Schmidt operators form a two-sided ideal in the algebra of bounded operators, due to the inequality
\begin{equation*}
	\norm{ BAC }_\op \leq \norm{B}_\op \norm{A}_{\I_2} \norm{C}_\op .
\end{equation*}

We denote the resolvent of the Schr\"odinger operator with zero potential by
\begin{equation*}
	R_0(\kappa) := \left( - \partial_x^2 + \kappa^2 \right)^{-1}
	\quad\tx{with integral kernel}\quad
	\langle \del_x, R_0(\kappa)\del_y \rangle = \tfrac{1}{2\kappa} e^{-\kappa|x-y|} .
\end{equation*}
The energy parameter $\kappa$ will always be real and positive.  Consequently, $R_0(\kappa)$ will always be positive definite and so we may consider its positive definite square-root $\sqrt{ R_0(\kappa) }$.

The following computation from~\cite[Prop.~2.1]{Killip2019} lies at the heart of our analysis:
\begin{lem}
	For $q\in H^{-1}(\R)$ we have
	\begin{equation}
		\norm{ \sqrt{ R_0(\kappa) } q \sqrt{ R_0(\kappa) } }_{\I_2}^2
		= \frac{1}{\kappa} \int \frac{|\hat{q}(\xi)|^2}{\xi^2+4\kappa^2}\dxi
		= \frac{1}{\kappa} \norm{ q }^2_{H^{-1}_\kappa} .
		\label{eq:hskappa identity 1}
	\end{equation}
\end{lem}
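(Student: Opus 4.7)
The plan is to reduce the squared Hilbert--Schmidt norm to a scalar trace, compute this trace using the explicit integral kernel of $R_0(\kappa)$, and convert to the Fourier side via Plancherel. I would first establish the identity for Schwartz $q$ and then extend to $q \in H^{-1}(\R)$ by a density argument.

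For Schwartz real-valued $q$, set $A := \sqrt{R_0(\kappa)}\, q\, \sqrt{R_0(\kappa)}$. Self-adjointness ($A = A^*$) gives $\norm{A}_{\I_2}^2 = \tr(A^*A) = \tr(A^2)$, and cyclicity of the trace collapses two of the square roots to yield
\begin{equation*}
\norm{A}_{\I_2}^2 = \tr\bigl(q R_0(\kappa) q R_0(\kappa)\bigr).
\end{equation*}
Since $q R_0(\kappa)$ has integral kernel $\tfrac{1}{2\kappa} q(x) e^{-\kappa|x-y|}$, taking the diagonal of the square gives
\begin{equation*}
\tr\bigl(q R_0(\kappa) q R_0(\kappa)\bigr) = \frac{1}{4\kappa^2}\iint q(x)\, q(z)\, e^{-2\kappa|x-z|} \dx \dz .
\end{equation*}
This is the $L^2$ pairing of $e^{-2\kappa|\cdot|}$ with the convolution $q * \tilde q$, where $\tilde q(y) := q(-y)$. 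Using the Fourier transforms $\widehat{e^{-2\kappa|\cdot|}}(\xi) = \sqrt{2/\pi}\cdot 2\kappa\,(\xi^2+4\kappa^2)^{-1}$ and $\widehat{q * \tilde q}(\xi) = \sqrt{2\pi}\,|\hat q(\xi)|^2$ (the latter using that $q$ is real), Plancherel collapses the expression to $\tfrac{1}{\kappa}\int |\hat q(\xi)|^2\,(\xi^2+4\kappa^2)^{-1} \dxi$, which is precisely $\tfrac{1}{\kappa}\norm{q}^2_{H^{-1}_\kappa}$ by the definition~\eqref{eq:Hskappa norm dfn} with $s = -1$.

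To extend to general $q \in H^{-1}(\R)$, I would approximate by Schwartz $q_n \to q$ in $H^{-1}_\kappa$. The Schwartz-case identity then shows that $\{\sqrt{R_0(\kappa)}\, q_n\, \sqrt{R_0(\kappa)}\}$ is Cauchy in $\I_2$, and its $\I_2$ limit agrees in the sesquilinear form sense with $\sqrt{R_0(\kappa)}\, q\, \sqrt{R_0(\kappa)}$, which for distributional $q$ is defined via the $H^{-1}$--$H^{1}$ duality pairing applied to products $\sqrt{R_0(\kappa)}f \cdot \sqrt{R_0(\kappa)}g \in H^1$ (using that $H^1(\R)$ is an algebra). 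Both sides of~\eqref{eq:hskappa identity 1} are continuous in $q \in H^{-1}(\R)$, so the identity passes to the limit.

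The only real obstacle is this density extension: for distributional $q$ the iterated trace integral is not \emph{a priori} meaningful, so one cannot just write the computation down. The Schwartz-plus-density strategy bypasses this transparently, and as a byproduct the resulting identity certifies that $q \mapsto \sqrt{R_0(\kappa)}\, q\, \sqrt{R_0(\kappa)}$ is a bounded linear map from $H^{-1}(\R)$ into $\I_2$, which is precisely the kind of smoothing estimate that will fuel the rest of the analysis.
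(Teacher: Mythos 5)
Your proof is correct and follows essentially the same route as the source: the paper states this lemma without proof, citing \cite[Prop.~2.1]{Killip2019}, where the identity is obtained exactly as you do---by reducing $\norm{A}_{\I_2}^2$ to $\tr(qR_0qR_0)$, inserting the explicit kernel $\tfrac{1}{2\kappa}e^{-\kappa|x-y|}$, and passing to Fourier variables via the transform of $e^{-2\kappa|\cdot|}$. The density extension to $q\in H^{-1}(\R)$ is the standard completion of that computation and is handled appropriately.
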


The identity~\eqref{eq:hskappa identity 1} guarantees that the Neumann series for the resolvent of $-\partial^2 + q$ with $q\in H^{-1}$ converges for $\kappa$ sufficiently large.  This construction also works for $q$ perturbations of $W$:
\begin{lem}[Resolvents]
	Given $q\in H^{-1}(\R)$, there is a unique self-adjoint operator corresponding to $-\partial^2_x + W+ q$ with domain $H^1(\R)$.  Moreover, given $A>0$ there exists a constant $\kappa_0$ so that the series
	\begin{equation}
		R(\kappa,W) 
		= (-\partial^2+W+\kappa^2)^{-1}
		= \sum_{\ell=0}^\infty (-1)^\ell \sqrt{ R_0 } \big( \sqrt{R_0} W \sqrt{R_0} \big)^\ell \sqrt{ R_0 }
		\label{eq:resolvent series 1}
	\end{equation}
	converges absolutely to a positive definite operator for $\kappa \geq \kappa_0$, and the series
	\begin{equation}
		R(\kappa,W+q) = \sum_{\ell=0}^\infty (-1)^\ell \sqrt{ R(\kappa,W) } \big( \sqrt{R(\kappa,W)} q \sqrt{R(\kappa,W)} \big)^\ell \sqrt{ R(\kappa,W) }
		\label{eq:resolvent series 2}
	\end{equation}
	converges absolutely for all $\norm{q}_{H^{-1}} \leq A$ and $\kappa \geq \kappa_0$.
\end{lem}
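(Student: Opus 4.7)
The plan is to run two Neumann series in succession, exploiting the Hilbert--Schmidt identity~\eqref{eq:hskappa identity 1} to tame $q$ and the $L^\infty$ boundedness of $W$ to tame $W$. The two potentials must be handled differently because $W \notin H^{-1}(\R)$, so~\eqref{eq:hskappa identity 1} cannot be applied to it directly. Once~\eqref{eq:resolvent series 1} has been established, the form comparison $R(\kappa,W) \asymp R_0(\kappa)$ that it furnishes transfers~\eqref{eq:hskappa identity 1} into the estimate needed for~\eqref{eq:resolvent series 2}; the self-adjoint operator corresponding to $-\partial^2 + W + q$ is then produced via the KLMN theorem.

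For~\eqref{eq:resolvent series 1}, we have $\norm{W}_{L^\infty} \leq |c_1|+|c_2|$ and $\norm{R_0(\kappa)}_\op = 1/\kappa^2$, so
\begin{equation*}
    \norm{ \sqrt{R_0(\kappa)}\, W\, \sqrt{R_0(\kappa)} }_\op \leq \norm{W}_{L^\infty}/\kappa^2 \leq \tfrac{1}{2}
\end{equation*}
for $\kappa^2 \geq 2\norm{W}_{L^\infty}$, and~\eqref{eq:resolvent series 1} converges absolutely in operator norm. Since $W$ is a bounded self-adjoint perturbation, $-\partial^2+W$ is self-adjoint on $H^2(\R)$ by Kato--Rellich and the sum of the series equals its (positive-definite) resolvent $R(\kappa,W)$. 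The quadratic-form sandwich
\begin{equation*}
    \tfrac{1}{2}(-\partial^2+\kappa^2) \leq -\partial^2 + W + \kappa^2 \leq \tfrac{3}{2}(-\partial^2+\kappa^2) \qquad \text{on } H^1(\R),
\end{equation*}
valid when $\kappa^2 \geq 2\norm{W}_{L^\infty}$, yields the crucial operator comparison $\tfrac{2}{3} R_0(\kappa) \leq R(\kappa,W) \leq 2 R_0(\kappa)$.

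For~\eqref{eq:resolvent series 2}, two applications of $\tr(XA) \leq 2\tr(XB)$ (valid for positive $X$ and $A \leq 2B$) give
\begin{align*}
    \norm{ \sqrt{R(\kappa,W)}\, q\, \sqrt{R(\kappa,W)} }_{\I_2}^2
    &= \tr\bigl( R(\kappa,W) q R(\kappa,W) q \bigr) \\
    &\leq 4\, \tr\bigl( R_0(\kappa) q R_0(\kappa) q \bigr)
    = 4\, \norm{ \sqrt{R_0(\kappa)}\, q\, \sqrt{R_0(\kappa)} }_{\I_2}^2 .
\end{align*}
Invoking~\eqref{eq:hskappa identity 1} together with $\norm{q}_{H^{-1}_\kappa} \leq \norm{q}_{H^{-1}} \leq A$ for $\kappa \geq \tfrac{1}{2}$, we obtain
\begin{equation*}
    \norm{ \sqrt{R(\kappa,W)}\, q\, \sqrt{R(\kappa,W)} }_\op \leq \norm{ \sqrt{R(\kappa,W)}\, q\, \sqrt{R(\kappa,W)} }_{\I_2} \leq 2 A / \sqrt{\kappa} ,
\end{equation*}
so enlarging $\kappa_0$ to ensure $2A/\sqrt{\kappa_0} \leq \tfrac{1}{2}$ makes~\eqref{eq:resolvent series 2} absolutely convergent in operator norm. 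The analogous bound $\norm{\sqrt{R_0(\kappa)}(W+q)\sqrt{R_0(\kappa)}}_\op \leq \norm{W}_{L^\infty}/\kappa^2 + A/\sqrt{\kappa}$, strictly less than $1$ for $\kappa$ large, shows that $W+q$ is form-bounded relative to $-\partial^2 + \kappa^2$ with bound below $1$; the KLMN theorem then produces a unique self-adjoint operator for $-\partial^2 + W + q$ with form domain $H^1(\R)$, and the second resolvent identity identifies its resolvent with the limit of~\eqref{eq:resolvent series 2}.

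The main obstacle is the double bootstrap: $W$ cannot be fed into~\eqref{eq:hskappa identity 1}, so we must first construct $R(\kappa,W)$ by operator-norm estimates and extract the form comparison with $R_0(\kappa)$. This comparison is the indispensable bridge that lets the Hilbert--Schmidt smallness of $q$ survive the transition from $R_0(\kappa)$ to $R(\kappa,W)$ and drive the convergence of the second Neumann series.
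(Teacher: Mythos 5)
Your proposal is correct and follows the same overall architecture as the paper's proof: a Neumann series for $R(\kappa,W)$ driven by operator-norm smallness of $\sqrt{R_0}\,W\sqrt{R_0}$, a Hilbert--Schmidt bound on $\sqrt{R(\kappa,W)}\,q\,\sqrt{R(\kappa,W)}$ to drive the second series, and a KLMN-type argument for self-adjointness. The one genuine difference is how you obtain the key estimate $\norm{\sqrt{R(\kappa,W)}\,q\,\sqrt{R(\kappa,W)}}_{\I_2}\lesssim \kappa^{-1/2}\norm{q}_{H^{-1}_\kappa}$: the paper expands both copies of $R(\kappa,W)$ in $\tr\{R(\kappa,W)\,q\,R(\kappa,W)\,\ol{q}\}$ into their series in $W$ and sums the resulting double geometric series term by term against~\eqref{eq:hskappa identity 1}, whereas you extract the comparison $\tfrac{2}{3}R_0\leq R(\kappa,W)\leq 2R_0$ from the form sandwich $\tfrac12(-\partial^2+\kappa^2)\leq -\partial^2+W+\kappa^2\leq\tfrac32(-\partial^2+\kappa^2)$ and then apply monotonicity of the trace twice. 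Your route is cleaner and makes the underlying mechanism ($R(\kappa,W)\asymp R_0$) explicit; the only point deserving care is that the step $\tr(XA)\leq 2\tr(XB)$ involves the distribution $q\in H^{-1}(\R)$, so it should be phrased in terms of the genuinely defined Hilbert--Schmidt operators $\sqrt{A}\,q\,\sqrt{A}$ (or verified first for Schwartz $q$ and extended by density), exactly as the paper does implicitly when it writes the trace. A second, cosmetic difference: you realize $-\partial^2+W+q$ by making $W+q$ form-bounded relative to $-\partial^2$, while the paper makes $q$ form-bounded relative to $-\partial^2+W$; both land on the same self-adjoint operator with form domain $H^1(\R)$.
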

\begin{proof}
	Initially we require that $\kappa\geq 1$.  As $W\in L^\infty$, we may define the operator $-\partial^2 + W$ via the quadratic form
	\begin{equation*}
		\phi \mapsto \int \big( |\phi'(x)|^2 + W(x) |\phi(x)|^2 \big)\dx 
	\end{equation*}
	equipped with the domain $H^1(\R)$.  Using the elementary estimates $\norm{R_0}_\op \leq \kappa^{-2}$ and $\norm{W}_\op \leq \norm{W}_{L^\infty}$, it is clear that the series~\eqref{eq:resolvent series 1} for $R(\kappa,V)$ is absolutely convergent for all $\kappa^2 \geq 2\norm{W}_{L^\infty}$.
	
	Expanding the series~\eqref{eq:resolvent series 1} and using the identity~\eqref{eq:hskappa identity 1} we estimate
	\begin{align*}
		&\norm{ \sqrt{R(\kappa,W)} q \sqrt{R(\kappa,W)} }_{\I_2}^2
		= \tr\{ R(\kappa,W) q R(\kappa,W) \ol{q} \} \\
		&\leq \sum_{\ell,m = 0}^\infty \norm{ \sqrt{R_0}W\sqrt{R_0} }_{\op}^{\ell+m} \norm{ \sqrt{R_0}q\sqrt{R_0} }^2_{\I_2}
		\leq 4 \kappa^{-1} \norm{q}^2_{H^{-1}_\kappa}
	\end{align*}
	for all $\kappa^2 \geq 2\norm{W}_{L^\infty}$, and hence
	\begin{equation}
		\norm{ \sqrt{R(\kappa,W)} q \sqrt{R(\kappa,W)} }_{\I_2} \leq 2 \kappa^{-1/2} \norm{q}_{H^{-1}_\kappa} .
		\label{eq:hskappa identity 2}
	\end{equation}
	Consequently, for $\phi\in H^1(\R)$ we have
	\begin{align*}
		\int q(x) |\phi(x)|^2 \dx
		&\leq \norm{ \sqrt{R(\kappa,W)} q \sqrt{R(\kappa,W)} }_\op \int \big( |\phi'(x)|^2 + |W(x)| |\phi(x)|^2 \big) \dx \\
		&\leq \tfrac{1}{2} \int \big( |\phi'(x)|^2 + |W(x)| |\phi(x)|^2 \big) \dx
	\end{align*}
	provided that $\kappa \geq 16 A^2$.  We conclude that $-\partial^2 + W + q$ is a form-bounded perturbation of $-\partial^2 + W$ with relative norm strictly less than 1; this guarantees that $-\partial^2 + W + q$ exists, is unique, and has the same form domain $H^1(\R)$ (cf.~\cite[Th.~X.17]{Reed1975}).  The estimate~\eqref{eq:hskappa identity 2} then demonstrates that the series~\eqref{eq:resolvent series 2} for $R(\kappa,W+q)$ is absolutely convergent for all $\kappa \gg A^2$.
\end{proof}

In~\cite{Killip2019} the restriction of the integral kernel of $R(\kappa,q) - R_0(\kappa)$ to the diagonal was instrumental in controlling $q$ in $H^{-1}$.  This construction also works for $q$ perturbations of $W$:
\begin{prop}[Diagonal Green's function]
	\label{thm:diffeo prop}
	Fix an integer $s\geq -1$ and $A>0$, and let $B_A\subset H^s(\R)$ denote the closed ball of radius $A$.  There exists a constant $\kappa_0$ such that for $\kappa\geq\kappa_0$ the diagonal Green's function $g(x;\kappa,W+q) := G(x,x;\kappa,W+q)$ exists for $q\in B_A$, the functional
	\begin{equation}
		q \mapsto g(x;\kappa,W+q) - g(x;\kappa,W)
		\label{eq:diffeo prop 2}
	\end{equation}
	is real-analytic $B_A \to H^{s+2}_\kappa$, and we have the estimate
	\begin{equation}
		\norm{ g(x;\kappa,W+q) - g(x;\kappa,W) }_{H^{s+2}_\kappa}
		\lesssim \kappa^{-1} \norm{q}_{H^{s}_\kappa}
		\label{eq:diffeo prop}
	\end{equation}
	uniformly for $q\in B_A$ and $\kappa\geq \kappa_0$.
\end{prop}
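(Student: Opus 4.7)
The plan is to expand the resolvent difference $R(\kappa,W+q) - R(\kappa,W)$ as a convergent power series in $q$ using the Neumann expansion~\eqref{eq:resolvent series 2}, restrict to the diagonal, and bound each resulting multilinear term in $H^{s+2}_\kappa$ uniformly on $B_A$. Real-analyticity of the map~\eqref{eq:diffeo prop 2} then follows automatically from the convergent power-series representation.

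Concretely, write $R_W := R(\kappa,W)$ for short. Removing the $\ell=0$ term of~\eqref{eq:resolvent series 2} and telescoping pairs $\sqrt{R_W}\sqrt{R_W}=R_W$, I obtain
\begin{equation*}
R(\kappa,W+q) - R(\kappa,W) = \sum_{\ell=1}^{\infty} (-1)^\ell R_W (q R_W)^\ell .
\end{equation*}
Each summand is trace class for $\ell\geq 1$: it contains two Hilbert--Schmidt factors of the form $\sqrt{R_W} q \sqrt{R_W}$ controlled by~\eqref{eq:hskappa identity 2}, separated by bounded $\sqrt{R_W}$-factors. Each summand therefore admits a continuous integral kernel, and its diagonal
\begin{equation*}
g_\ell(x;q) := \langle \del_x, R_W (q R_W)^\ell \del_x \rangle
\end{equation*}
is a continuous $\ell$-linear functional of $q$.

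For the base case $s=-1$, I would dualize against $\psi\in H^{-1}_\kappa$ and use cyclicity of the trace to write
\begin{equation*}
\int \psi(x) g_\ell(x;q) \dx = \tr\bigl\{ \sqrt{R_W} \psi \sqrt{R_W} \cdot (\sqrt{R_W} q \sqrt{R_W})^\ell \bigr\} .
\end{equation*}
H\"older for Schatten ideals together with the elementary bound $\norm{T^\ell}_{\I_2} \leq \norm{T}_{\I_2}\norm{T}_{\op}^{\ell-1}$ reduces this to~\eqref{eq:hskappa identity 2} applied separately to $\psi$ and $q$, yielding
\begin{equation*}
\left|\int \psi g_\ell \right| \lesssim \kappa^{-1} \norm{\psi}_{H^{-1}_\kappa} \norm{q}_{H^{-1}_\kappa} \bigl( 2\kappa^{-1/2} \norm{q}_{H^{-1}_\kappa} \bigr)^{\ell-1} .
\end{equation*}
Taking the supremum over $\psi$ and summing in $\ell$ (the geometric tail converges once $\kappa$ is large enough in terms of $A$) yields~\eqref{eq:diffeo prop}.

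For integer $s\geq 0$, I would proceed by induction on $s$ using a higher-regularity analogue of~\eqref{eq:hskappa identity 2} suitable for pairings with $\psi\in H^{-s-2}_\kappa$. The idea is to commute factors of $(-\partial_x^2 + W + \kappa^2)^{1/2}$ across $\psi$ in the trace expression, converting powers of $R_W$ into powers of the Bessel potential acting on $\psi$ plus commutator remainders of schematic form $W^{(j)}\cdot R_W^{a}$; these commutators are bounded using the $W^{k,\infty}$-norms of $W$. The main obstacle is tracking the $\kappa$-powers through the commutators so as to preserve the precise $\kappa^{-1}$ gain in~\eqref{eq:diffeo prop}. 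The smoothness and boundedness of $W$ together with the exponential decay of $W'$ and all its higher derivatives ensure that each commutator has bounded operator norm with favourable $\kappa$-dependence, so the induction closes.
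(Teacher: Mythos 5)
Your treatment of the base case $s=-1$ is correct and essentially identical to the paper's: dualize against $H^{-1}_\kappa$, cycle the trace, and use~\eqref{eq:hskappa identity 2} together with the Schatten bound $\snorm{T^\ell}_{\I_2}\leq\snorm{T}_{\I_2}\snorm{T}_\op^{\ell-1}$; the geometric tail then sums for $\kappa\gg A^2$. (One secondary omission: a product of Hilbert--Schmidt operators does not automatically have a \emph{continuous} kernel, so restricting to the diagonal requires an argument; the paper supplies this via the $\tfrac12$-H\"older continuity of $x\mapsto\sqrt{R_0}\,\del_x$ as a map $\R\to L^2$.)

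The genuine gap is in the case $s\geq 0$, which is where all the work lies. You need to show that $j\leq s+1$ derivatives of the $\ell$-linear term $g_\ell$ can be distributed so that exactly one factor of $q$ carries $\snorm{q}_{H^{s}_\kappa}$ while the rest carry $\snorm{q}_{H^{-1}_\kappa}$, with the overall $\kappa^{-1}$ gain intact. Your proposed mechanism—commuting $(-\partial_x^2+W+\kappa^2)^{1/2}$ across the trace and absorbing the errors as commutator remainders of the form $W^{(j)}R_W^a$—is not a routine computation: fractional powers of the perturbed Schr\"odinger operator do not have elementary commutators with multiplication operators, and you give no argument that the induction actually closes or that the $\kappa$-powers survive. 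The paper avoids this entirely by two moves you are missing. First, the translation identity $g(x;\kappa,q(\cdot+h))=g(x+h;\kappa,q)$, differentiated at $h=0$, expresses $g^{(j)}_\ell$ as a trace against the commutator $[\partial^j,\,\cdot\,]$, so the derivatives fall on the potentials by the Leibniz rule rather than on the resolvents. Second, each $R(\kappa,W)$ is further expanded into its $R_0$-series~\eqref{eq:resolvent series 1}, so that only free resolvents appear and $[\partial,R_0]=0$; the derivatives then land only on factors of $q$ and $W$, and H\"older in Fourier variables gives $\prod_i\snorm{q^{(\sigma_i)}}_{H^{-1}_\kappa}\leq\snorm{q}_{H^{|\sigma|-1}_\kappa}\snorm{q}_{H^{-1}_\kappa}^{\ell-1}$. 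Without this double expansion and the translation identity, your higher-regularity step is an unproven assertion rather than a proof.
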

\begin{proof}
	In Fourier variables, for $\kappa\geq 1$ we compute
	\begin{equation*}
	\snorm{ \sqrt{R_0}\del_x }_{L^2}^2 \lesssim \kappa^{-1}, \qquad 
	\snorm{ \sqrt{R_0} \del_{x+h} - \sqrt{R_0} \del_x }_{L^2}^2
	\leq \int \frac{|e^{i\xi h}-1|^2}{\xi^2 + 1} \dxi 
	\lesssim |h| .
	\end{equation*}
	This demonstrates that $x\mapsto \sqrt{R_0} \del_x$ is $\tfrac{1}{2}$-H\"older continuous as a mapping $\R\to L^2$.  Therefore, from the series~\eqref{eq:resolvent series 1} we see that
	\begin{align*}
	&\left| \left\langle\del_x , \left[R(\kappa, W)-R_{0}(\kappa)\right] \del_y \right\rangle - \left\langle\del_{x'} , \left[R(\kappa, W)-R_{0}(\kappa)\right] \del_{y'} \right\rangle \right| \\
	&\lesssim \kappa^{-1/2} \big( |x-x'|^{1/2} + |y-y'|^{1/2} \big) \sum_{\ell =1}^{\infty} \left( \kappa^{-2} \norm{W}_{L^\infty} \right)^\ell .
	\end{align*}
	The series converges provided that $\kappa \gg \norm{W}_{L^\infty}^{1/2}$.  Consequently, the Green's function $G(x,y)=\langle \del_x, R(\kappa,W) \del_y \rangle$ is continuous in both $x$ and $y$, and so we may unambiguously define
	\begin{equation}
	g(x;\kappa,W)
	= \tfrac{1}{2\kappa} + \sum_{\ell=1}^\infty (-1)^\ell \big\langle \sqrt{R_0} \del_x, (\sqrt{R_0}W\sqrt{R_0})^\ell \sqrt{R_0} \del_x \big\rangle .
	\label{eq:g series 1}
	\end{equation}
	The zeroth order term $\tfrac{1}{2\kappa}$ can be seen directly from the integral kernel for the free resolvent $R_0(\kappa)$.
	
	Using the series~\eqref{eq:resolvent series 1} we obtain
	\begin{align*}
	&\snorm{ \sqrt{R(\kappa,W)}\del_x }_{L^2}^2
	\leq \snorm{\sqrt{R_0}\del_x}_{L^2}^2 \sum_{\ell=1}^\infty \big( \kappa^{-2} \norm{W}_{L^\infty} \big)^\ell
	\lesssim \kappa^{-1}, \\ 
	&\snorm{ \sqrt{R(\kappa,W)} \del_{x+h} - \sqrt{R_0} \del_x }_{L^2}^2
	\leq \snorm{ \sqrt{R_0}\del_{x+h} - \sqrt{R_0}\del_{x} }_{L^2}^2 \sum_{\ell=1}^\infty \big( \kappa^{-2} \norm{W}_{L^\infty} \big)^\ell
	\lesssim |h|
	\end{align*}
	provided that $\kappa \gg \norm{W}^{1/2}_{L^\infty}$.  From the series~\eqref{eq:resolvent series 2} and the estimate~\eqref{eq:hskappa identity 2} we then have
	\begin{align*}
	&\left| \left\langle\del_x , \left[R(\kappa, W+q)-R(\kappa,W)\right] \del_y \right\rangle - \left\langle\del_{x'} , \left[R(\kappa, W+q)-R(\kappa,W)\right] \del_{y'} \right\rangle \right| \\
	&\lesssim \kappa^{-1/2} \big( |x-x'|^{1/2} + |y-y'|^{1/2} \big) \sum_{\ell =1}^{\infty} \big( 2 \kappa^{-1/2} A \big)^\ell
	\end{align*}
	for all $q\in B_A$.  The series converges provided that $\kappa\gg A^2$.  Therefore, the Green's function $G(x,y;\kappa,W+q)$ is also a continuous function of $x$ and $y$ and we may define
	\begin{equation*}
	g(x;\kappa,W+q) = g(x;\kappa,W) + \sum_{\ell=1}^\infty (-1)^\ell \big\langle \sqrt{R} \del_x, ( \sqrt{R}\, q \sqrt{R})^\ell \sqrt{R} \del_x \big\rangle ,
	\end{equation*}
	where $R = R(\kappa,W)$.  This shows that $g(x;\kappa,W+q)$ is a real analytic functional $B_A \to H^1$.
	
	Next, we check that $g(x;\kappa,W+q) - g(x;\kappa,W)$ is in $H^{s+2}_\kappa$ by estimating the first $s+1\geq 0$ derivatives in $H^1_\kappa$ by duality.  The Green's function for a translated potential is the translation of the original Green's function:
	\begin{equation}
	g(x;\kappa,q(\cdot+h)) = g(x+h;\kappa,q) \quad\tx{for all }h\in\R .
	\label{eq:g trans prop}
	\end{equation}
	Differentiating~\eqref{eq:g trans prop} at $h=0$ and using the resolvent identity, we have
	\begin{equation}
	g^{(j)}(x;\kappa,W+q)
	= \sum_{\ell=0}^\infty (-1)^\ell \big\langle \del_x, [\partial^j, R(\kappa,W) ( q R(\kappa,W) )^\ell\del_x ] \big\rangle .
	\label{eq:g trans prop 2}
	\end{equation}
	Here, $[A,B] = AB-BA$ denotes the commutator and $\partial^j$ denotes $j$ spatial partial derivatives.  Within the summand there are $\ell+1$ factors of $R(\kappa,W)$, and we expand each into the series~\eqref{eq:resolvent series 1} in powers of $W$ indexed by $m_i$.  For $j=0,\dots,s+1$ and $f\in H^{-1}_\kappa$, this yields
	\begin{align*}
	&\left| \int f(x) [ g(\kappa,W+q) - g(\kappa,W) ]^{(j)} (x) \dx \right| \\
	&\leq \sum_{\ell=1}^\infty \sum_{m_0,\dots,m_\ell = 0}^\infty \big| \tr\big\{ f [\partial^j, R_0 (WR_0)^{m_0} qR_0 \cdots qR_0 (WR_0)^{m_\ell} ] \big\} \big| .
	\end{align*}
	We distribute the derivatives $[\partial^j,\cdot]$ using the product rule.  We use the operator estimate~\eqref{eq:hskappa identity 1} for each factor of $\sqrt{R_0}\, q\sqrt{R_0}$ and estimate the remaining factors in operator norm.  Given a multiindex $\sigma\in\N^\ell$ with $|\sigma| \leq j$, H\"older's inequality in Fourier variables yields
	\begin{equation*}
	\prod_{i=1}^\ell \snorm{ q^{(\sigma_j)} }_{H^{-1}_\kappa} 
	\leq \snorm{ q^{(|\sigma|)} }_{H^{-1}_\kappa} \snorm{ q }_{H^{-1}_\kappa}^{\ell-1}
	\leq \snorm{ q }_{H^{j-1}_\kappa} \snorm{ q }_{H^{-1}_\kappa}^{\ell-1}. 
	\end{equation*}
	As $j\leq s+1$, we have
	\begin{align*}
	&\left| \int f(x) [ g(\kappa,W+q) - g(\kappa,W) ]^{(j)} (x) \dx \right| \\
	&\leq \sum_{\ell=1}^\infty \sum_{m_0,\dots,m_\ell = 0}^\infty \frac{ \norm{f}_{H^{-1}_\kappa} \norm{q}_{H^{s}_\kappa} }{\kappa} \left( \frac{ \norm{q}_{H^{-1}_\kappa} }{\kappa^{1/2}} \right)^{\ell-1} \left( \frac{ \norm{W}_{W^{s+1,\infty}} }{\kappa^{2}} \right)^{m_0+\dots+m_\ell} .
	\end{align*}
	First we perform the inner sum over $m_0,\dots,m_\ell$; re-indexing $m = m_0 + \dots + m_\ell$, we have
	\begin{equation}
	\begin{aligned} 
	\sum_{m_{0}, \ldots, m_{\ell} \geq 0} \left( \frac{\norm{W}_{W^{s+1,\infty}}}{\kappa^{2}} \right)^{m_{0}+\cdots+m_{\ell}}
	&= \sum_{m=0}^{\infty} \frac{(\ell+m) !}{\ell !\, m !} \left( \frac{\norm{W}_{W^{s+1,\infty}}}{\kappa^{2}} \right)^{m} \\ 
	&\lesssim \left( 1 - \frac{\norm{W}_{W^{s+1,\infty}}}{\kappa^{2}} \right)^{\ell+1} \leq 1
	\end{aligned}
	\label{eq:double sum inner}
	\end{equation}
	uniformly in $\ell$, provided that $\kappa \gg \norm{W}_{W^{s+1,\infty}}^{1/2}$.  The sum over $\ell \geq 1$ then converges uniformly for $\kappa \gg A^2$, yielding
	\begin{equation*}
	\left| \int f [ g(\kappa,W+q) - g(\kappa,W) ]^{(j)} \dx \right|
	\lesssim \kappa^{-1} \norm{f}_{H^{-1}_{\kappa}} \norm{q}_{H^{s}_\kappa} \quad\tx{for }j=0,\dots,s+1 .
	\end{equation*}
	Taking a supremum over $\norm{f}_{H^{-1}_{\kappa}} \leq 1$, we obtain the estimate~\eqref{eq:diffeo prop}.
\end{proof}

As an offspring of the resolvent $R(\kappa, q)$, the diagonal Green's function comes with some algebraic identities.  In particular, in~\cite[Lem.~2.5--2.6]{Killip2019} it is shown that for Schwartz $q$ we have
\begin{equation}
\int \frac{G(x,y;\kappa,q) G(y,x;\kappa,q)}{2g(y;\kappa,q)^2}\dy = g(x;\kappa,q)
\label{eq:g alg prop 1}
\end{equation}
and
\begin{equation}
\begin{aligned}
&\int G(x,y;\kappa,q) \big[ - f''' + 2qf' + 2(qf)' + 4\kappa^2f' \big](y) G(y,x;\kappa,q)\dy \\
&= 2f'(y) g(x;\kappa,q) - 2f(x)g'(x;\kappa,q)
\end{aligned}
\label{eq:g alg prop 2}
\end{equation}
for all Schwartz $f$.  As is suggested by taking $f = g(\kappa,q)$ in~\eqref{eq:g alg prop 2}, multiplying by $1/2g(x;\kappa,q)^2$, and integrating in $x$, the diagonal Green's function satisfies the ODE
\begin{equation}
g'''(\kappa,q) = 2qg'(\kappa,q) + 2\left[ q g(\kappa,q) \right]' + 4\kappa^2g'(\kappa,q) ;
\label{eq:g alg prop 3}
\end{equation}
see~\cite[Prop.~2.3]{Killip2019} for a proof.

Ultimately, the convergence of the approximate flows will be dominated by the linear and quadratic terms of the series~\eqref{eq:resolvent series 1} for the diagonal Green's function.  Consequently, we will now record some useful operator identities for these two terms:
\begin{lem}
	For $\kappa\geq 1$ we have the operator identities
	\begin{align}
		&16\kappa^5 \langle \del_x, R_0 f R_0 \del_x \rangle = 16\kappa^4 R_0(2\kappa) f = \left[ 4\kappa^2 + \partial^2 + R_0(2\kappa) \partial^4 \right] f ,
		\label{eq:linear op id} \\
		&\begin{aligned} 16\kappa^5 \langle \del_x, R_0 f R_0 h R_0 \del_x \rangle &= 3fh - 3 [R_0(2\kappa)f''][R_0(2\kappa)h''] \\
			&\phantom{={}}+ 4\kappa^2 [R_0(2\kappa)f'][R_0(2\kappa)h'] ( -5 + R_0(2\kappa)\partial^2 ) \\
			&\phantom{={}}+ 4\kappa^2 [R_0(2\kappa)f][R_0(2\kappa)h] (5 \partial^2 + 2 R_0(2\kappa)\partial^4 ) , \end{aligned}
		\label{eq:quad op id}
	\end{align}
	where $R_0 = R_0(\kappa)$.
\end{lem}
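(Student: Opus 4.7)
For the first identity, the computation is elementary. Using $R_0(\kappa)(x,y) = \tfrac{1}{2\kappa} e^{-\kappa|x-y|}$,
\[
\langle \del_x, R_0 f R_0 \del_x \rangle = \int R_0(x,z) f(z) R_0(z,x)\, dz = \tfrac{1}{4\kappa^2} \int e^{-2\kappa|x-z|} f(z)\, dz = \tfrac{1}{\kappa}[R_0(2\kappa) f](x),
\]
since the kernel of $R_0(2\kappa)$ is $\tfrac{1}{4\kappa} e^{-2\kappa|x-z|}$. Multiplying by $16\kappa^5$ yields $16\kappa^4 R_0(2\kappa) f$, and iterating the resolvent identity $4\kappa^2 R_0(2\kappa) = I + R_0(2\kappa) \partial^2$ twice gives
\[
16\kappa^4 R_0(2\kappa) = 4\kappa^2 + 4\kappa^2 R_0(2\kappa)\partial^2 = 4\kappa^2 + \partial^2 + R_0(2\kappa)\partial^4.
\]

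For the second identity, the plan is to evaluate the triple-exponential integral
\[
\langle \del_x, R_0 f R_0 h R_0 \del_x \rangle = \tfrac{1}{8\kappa^3} \iint e^{-\kappa(|x-z_1|+|z_1-z_2|+|z_2-x|)} f(z_1) h(z_2)\, dz_1\, dz_2
\]
using the elementary identity $|x-z_1|+|z_1-z_2|+|z_2-x| = 2(\max(x,z_1,z_2) - \min(x,z_1,z_2))$. After decomposing the $(z_1, z_2)$-plane into the six regions corresponding to orderings of $\{x, z_1, z_2\}$, the exponent reduces on each region to an affine function of only two of the three variables, so the inner integrals factor into products of half-line exponential integrals of $f$ and $h$. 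Differentiating $R_0(2\kappa) f(x) = \tfrac{1}{4\kappa} \int e^{-2\kappa|x-z|} f(z)\, dz$ produces the identities
\[
e^{-2\kappa x} \int_{-\infty}^x e^{2\kappa z} f(z)\, dz = 2\kappa [R_0(2\kappa) f] - [R_0(2\kappa) f]', \qquad e^{2\kappa x} \int_x^\infty e^{-2\kappa z} f(z)\, dz = 2\kappa[R_0(2\kappa) f] + [R_0(2\kappa) f]',
\]
and their analogues for $h$, allowing each half-line integral to be written as an $R_0(2\kappa)$ of $f$ or $h$ (possibly after integrating by parts to trade an antiderivative of $f$ for a derivative of $h$, or vice versa). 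Summing the six contributions, grouping terms by which pair from $\{f,f',f''\} \times \{h,h',h''\}$ appears, and applying $4\kappa^2 R_0(2\kappa) = I + R_0(2\kappa)\partial^2$ to consolidate the coefficients, produces the four groupings on the RHS of~\eqref{eq:quad op id}.

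The main obstacle is combinatorial bookkeeping: the six-region decomposition generates many cross-terms and boundary contributions under integration by parts that must be tracked carefully to match the precise form on the RHS. No analytic subtlety enters; we may take $f, h$ Schwartz and all integrals converge absolutely for $\kappa \geq 1$, so the identity is purely symbolic and extends by density to any $f, h$ for which both sides are meaningful. A cleaner but less transparent alternative is to verify the identity in Fourier variables, where the diagonal has symbol
\[
K(\xi_1, \xi_2) = \int \frac{d\eta}{(\eta^2+\kappa^2)\bigl((\eta+\xi_1)^2+\kappa^2\bigr)\bigl((\eta+\xi_1+\xi_2)^2+\kappa^2\bigr)}
\]
which may be evaluated by residues; matching the resulting rational function in $(\xi_1, \xi_2, \kappa)$ against the rational symbol of the RHS reduces the identity to an algebraic check.
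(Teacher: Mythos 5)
Your treatment of the first identity \eqref{eq:linear op id} is correct and matches the paper: the same kernel computation gives $\langle\delta_x,R_0fR_0\delta_x\rangle=\kappa^{-1}R_0(2\kappa)f$, and your iterated resolvent identity $4\kappa^2R_0(2\kappa)=I+R_0(2\kappa)\partial^2$ is just the operator form of the symbol identity $\tfrac{16\kappa^4}{\xi^2+4\kappa^2}=4\kappa^2-\xi^2+\tfrac{\xi^4}{\xi^2+4\kappa^2}$ that the paper uses.

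For \eqref{eq:quad op id}, your primary route (evaluating the triple-exponential integral by splitting into the six orderings of $x,z_1,z_2$ and rewriting each half-line integral via $R_0(2\kappa)$) is genuinely different from the paper's, which quotes from the appendix of Killip--Vi\c{s}an the Fourier representation of the left side with symbol $\tfrac{8\kappa^4[\xi^2+(\xi-\eta)^2+\eta^2+24\kappa^2]}{(\xi^2+4\kappa^2)((\xi-\eta)^2+4\kappa^2)(\eta^2+4\kappa^2)}$ and then exhibits an explicit partial-fraction decomposition of this rational function whose four pieces are term-by-term the symbols of the four groupings on the right of \eqref{eq:quad op id}. Your ``cleaner alternative'' is in fact the paper's actual method (the residue evaluation of your $K(\xi_1,\xi_2)$ is what produces the $24\kappa^2+\sum\xi_i^2$ numerator; the same computation reappears in the paper's equicontinuity section). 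The gap in your write-up is that neither route is carried to completion: the entire content of \eqref{eq:quad op id} is the specific set of coefficients $3,-3,-5,+1,+5,+2$, and both of your plans defer exactly the step that determines them --- the ``combinatorial bookkeeping'' in one case, the ``algebraic check'' in the other. To close the argument you must either execute the six-region computation or display the partial-fraction identity explicitly (as the paper does) and observe that each summand is the Fourier symbol of the corresponding operator expression. No step you describe is wrong, and either route will work, but as written the decisive verification is missing. One further small point worth stating: the reduction from the symbol identity back to the operator identity uses that multiplication of symbols in the $\xi-\eta$ and $\eta$ variables corresponds to the pointwise product $[m_1(D)f]\cdot[m_2(D)h]$, which is what lets you read off products like $[R_0(2\kappa)f''][R_0(2\kappa)h'']$ from terms like $\tfrac{\eta^2(\xi-\eta)^2}{((\xi-\eta)^2+4\kappa^2)(\eta^2+4\kappa^2)}$; this is routine but should be said.
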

\begin{proof}
	From the integral kernel formula for $R_0(\kappa)$ we see that $\langle\del_x,R_0f R_0\del_x \rangle = \kappa^{-1}R_0(2\kappa) f$, which demonstrates the first equality of~\eqref{eq:linear op id}.  The second equality follows from the symbol identity
	\begin{equation*}
	\frac{16 \kappa^{4}}{\xi^{2}+4\kappa^{2}} = 4 \kappa^{2} - \xi^{2} + \frac{\xi^{4}}{\xi^{2}+4 \kappa^{2}} 
	\end{equation*}
	in Fourier variables.
	
	Now we turn to the second identity~\eqref{eq:quad op id}.  In~\cite[Appendix]{Killip2019} the Fourier transform of LHS\eqref{eq:quad op id} is found to be
	\begin{equation*}
	\ft\big( \tx{LHS}\eqref{eq:quad op id} \big)(\xi) = \frac{8\kappa^4}{\sqrt{2 \pi}} \int_{\R} \frac{ [\xi^{2}+(\xi-\eta)^{2}+\eta^{2}+24 \kappa^{2}] \hat{f}(\xi-\eta) \hat{h}(\eta) }{ (\xi^{2}+4 \kappa^{2}) ((\xi-\eta)^{2}+4 \kappa^{2}) (\eta^{2}+4 \kappa^{2}) } \deta .
	\end{equation*}
	The operator identity~\eqref{eq:quad op id} then follows from the equality
	\begin{align*}
	&\frac{8 \kappa^{4}\left[\xi^{2}+(\xi-\eta)^{2}+\eta^{2}+24 \kappa^{2}\right]}{(\xi^{2}+4 \kappa^{2})((\xi-\eta)^{2}+4 \kappa^{2})(\eta^{2}+4 \kappa^{2})} = 3 - \frac{3 \eta^{2}(\xi-\eta)^{2}}{((\xi-\eta)^{2}+4 \kappa^{2})(\eta^{2}+4 \kappa^{2})} \\
	&- \frac{20 \kappa^{2}\left[-\eta(\xi-\eta)+\xi^{2}\right]}{((\xi-\eta)^{2}+4 \kappa^{2})(\eta^{2}+4 \kappa^{2})} + \frac{4 \kappa^{2} \xi^{2}\left[\eta(\xi-\eta)+2 \xi^{2}\right]}{(\xi^{2}+4 \kappa^{2})((\xi-\eta)^{2}+4 \kappa^{2})(\eta^{2}+4 \kappa^{2})} . \qedhere\\
	\end{align*}
\end{proof}

We will also need to know that after extracting the linear and quadratic terms from $\kappa^5 g(\kappa,q+W)$, the remainder tends to zero as $\kappa\to\infty$:
\begin{lem}
	Given an integer $s\geq 1$ and $A>0$, we have
	\begin{equation}
	\begin{aligned}
	\kappa^5 \big\lVert \big\{ &g(\kappa,q+W) + \langle \del_x , R_0(q+W)R_0 \del_x \rangle \\
	&- \langle \del_x , R_0(q+W)R_0(q+W)R_0 \del_x \rangle \big\}^{(s+1)} \big\rVert_{L^2} \to 0 \quad\tx{as }\kappa \to \infty
	\end{aligned}
	\label{eq:tail conv}
	\end{equation}
	uniformly for $\norm{q}_{H^{s}} \leq A$.
\end{lem}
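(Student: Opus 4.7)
The strategy is to recognize the bracketed expression as the tail of the Neumann series for the diagonal Green's function. Expanding
\[
g(\kappa, W+q) = \tfrac{1}{2\kappa} + \sum_{\ell\geq 1}(-1)^\ell T_\ell(V), \qquad T_\ell(V) := \langle\delta_x, R_0(VR_0)^\ell\delta_x\rangle,
\]
with $V := W + q$, the series converges absolutely for $\kappa$ sufficiently large (uniformly in $\|q\|_{H^s}\leq A$) by \eqref{eq:resolvent series 1}--\eqref{eq:resolvent series 2}. Subtracting the $\ell = 1$ and $\ell = 2$ terms leaves $\tfrac{1}{2\kappa} + \sum_{\ell\geq 3}(-1)^\ell T_\ell(V)$, and $\partial^{s+1}$ annihilates the constant $\tfrac{1}{2\kappa}$ since $s + 1 \geq 2$.

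Following the translation-identity + duality + trace-estimate scheme from the proof of \cref{thm:diffeo prop}, I would distribute the derivatives as $\partial^{s+1}T_\ell(V) = \sum_{|\vec j|=s+1}\binom{s+1}{\vec j}T_\ell(V^{(j_1)},\ldots,V^{(j_\ell)})$, pair with $f\in L^2$ by duality, and expand each $V^{(j_i)} = W^{(j_i)} + q^{(j_i)}$. The $2^\ell$ resulting traces $\tr(fR_0 X_1 R_0\cdots X_\ell R_0)$ are bounded using the Hilbert--Schmidt estimates \eqref{eq:hskappa identity 1}--\eqref{eq:hskappa identity 2} for $f$- and $q$-factors and operator-norm bounds $\|\sqrt{R_0}W^{(j)}\sqrt{R_0}\|_\op \lesssim \kappa^{-2}\|W\|_{W^{j,\infty}}$ for $W$-factors. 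Each $\ell \geq 3$ trace carries at least four factors of $R_0$, producing $\kappa^{-(2\ell+1)}$ decay consistent with the asymptotic expansion $g = \tfrac{1}{2\kappa} - \tfrac{V}{4\kappa^3} + \tfrac{3V^2 - V''}{16\kappa^5} + O(\kappa^{-7})$ derivable from the ODE \eqref{eq:g alg prop 3}; hence $\kappa^5 \|\partial^{s+1}T_\ell\|_{L^2} \lesssim \kappa^{4 - 2\ell}\to 0$, and the sum over $\ell\geq 3$ converges by \eqref{eq:double sum inner}.

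The main obstacle is obtaining this $\kappa$-decay \emph{uniformly} in $\|q\|_{H^s}\leq A$. The naive bound $\|\sqrt{R_0}q^{(j)}\sqrt{R_0}\|_{\I_2} \lesssim \kappa^{-1/2}\|q\|_{H^s}$ does not decay uniformly in $\kappa$ for $j \geq 1$ (as this would require equicontinuity of $\{q : \|q\|_{H^s}\leq A\}$ in $H^s$). To overcome this, one groups each $q^{(j)}$-factor ($0\leq j \leq s$) with a full adjacent $R_0$, exploiting the sharper estimate $\|q^{(j)}R_0\|_{\I_2} \lesssim \kappa^{-3/2}\|q\|_{H^s}$, and one applies the operator identities \eqref{eq:linear op id}--\eqref{eq:quad op id} to any remaining $R_0 V^{(0)} R_0$ subproducts to extract additional $\kappa^{-2}$ decay through the smoothing operator $R_0(2\kappa)$. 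The exceptional factor $q^{(s+1)}$ (appearing only when $\vec j$ concentrates all $s+1$ derivatives on a single position) is handled via the $\I_2$-bound $\|\sqrt{R_0}q^{(s+1)}\sqrt{R_0}\|_{\I_2} \lesssim \kappa^{-1/2}\|q\|_{H^s}$, noting that the remaining $\ell - 1$ positions then all carry $V^{(0)}$ and supply the extra $\kappa$-decay through the same operator-identity mechanism.
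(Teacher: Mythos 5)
Your overall architecture---expand $g(\kappa,V)$ with $V=q+W$ into its Neumann series, identify the bracketed expression as the tail with three or more factors of $q$ or $W$ (the paper's condition $\ell+m_0+\cdots+m_\ell\geq 3$), pair with a dual function, and bound each trace by Hilbert--Schmidt norms for two factors and operator norms for the rest---is essentially the paper's. The paper estimates the $s$-th derivative in $H^1$ by duality against $f\in H^{-1}$, so the commutator $[\partial^s,\cdot]$ places at most $s$ derivatives on any single factor and no $q^{(s+1)}$ ever appears; you instead distribute $\partial^{s+1}$ and pair with $f\in L^2$. Both bookkeepings give the same total power $\kappa^{5-2k}$ for a term with $k\geq 3$ factors, hence $O(\kappa^{-1})$ with constants depending only on $A$ and $W$.

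That said, two points need correction. First, the ``main obstacle'' you identify is illusory: no equicontinuity-type input is required. Placing $\sqrt{R_0}f\sqrt{R_0}$ and the single highest-order factor in $\I_2$ via~\eqref{eq:hskappa identity 1} together with $\norm{h}_{H^{-1}_\kappa}\lesssim\kappa^{-1}\norm{h}_{L^2}$, and every remaining factor in operator norm via $\snorm{\sqrt{R_0}\,V^{(j)}\sqrt{R_0}}_{\op}\leq\kappa^{-2}\snorm{V^{(j)}}_{L^\infty}$ (using $H^1\hookrightarrow L^\infty$ for the $q$-factors, which carry at most $s-1$ derivatives), already yields $\kappa^5\cdot\kappa^{-2}\cdot(\kappa^{-2})^{k-1}\leq\kappa^{-1}$ for $k\geq 3$, uniformly on $\norm{q}_{H^s}\leq A$. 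Second, the mechanism you propose to ``extract additional decay''---applying the identities~\eqref{eq:linear op id}--\eqref{eq:quad op id} to $R_0V^{(0)}R_0$ subproducts sitting inside a longer trace---is not a legitimate operation: those identities compute the diagonal restriction $\langle\del_x,R_0fR_0\del_x\rangle$ of an integral kernel, i.e.\ a function of $x$, and cannot be substituted for the operator $R_0fR_0$ occurring in the interior of a product. Fortunately this step is unnecessary, for the reason just given. Finally, you should address the purely-$W$ summands ($\ell=0$, $m_0\geq 3$), where no $q$-factor is available to place in $L^2$: there the commutator forces at least one derivative onto some copy of $W$, and $W'$ being Schwartz supplies the needed $\I_2$ factor, as the paper notes explicitly.
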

\begin{proof}
	We estimate the $s$th derivative in $H^1$ by duality.  Differentiating the translation identity~\eqref{eq:g trans prop 2} at $h=0$, we have
	\begin{equation*}
	g^{(s)}(x;\kappa,W+q)
	= \sum_{\ell=0}^\infty (-1)^\ell \big\langle \del_x, [\partial^{s}, R(\kappa,W) ( q R(\kappa,W) )^\ell\del_x ] \big\rangle .
	\end{equation*}
	Within the summand there are $\ell+1$ factors of $R(\kappa,W)$, and we expand each into the series~\eqref{eq:resolvent series 1} in powers of $W$ indexed by $m_i$.  For $f\in H^{-1}$ this yields
	\begin{equation}
	\begin{aligned}
	&\begin{aligned}
	\kappa^5 \bigg| \int f(x) \big\{ &g(\kappa,q+W) + \langle \del_x , R_0(q+W)R_0 \del_x \rangle \\[-0.5em]
	&- \langle \del_x , R_0(q+W)R_0(q+W)R_0 \del_x \rangle \big\}^{(s)} \dx \bigg| 
	\end{aligned} \\
	&\leq \kappa^5 \sum_{\substack{ \ell\geq 0,\ m_0,\dots,m_\ell\geq 0 \\ \ell+m_0+\dots+m_\ell \geq 3 }} \big| \tr\big\{ f [\partial^{s}, R_0 (WR_0)^{m_0} qR_0 \cdots qR_0 (WR_0)^{m_\ell} ] \big\} \big| .
	\end{aligned}
	\label{eq:tail 1}
	\end{equation}
	We distribute the derivatives $[\partial^{s},\cdot]$ using the product rule.  We then use the operator estimate~\eqref{eq:hskappa identity 1} and the observation $\norm{ f }_{H^{-1}_\kappa} \lesssim \kappa^{-1} \norm{ f }_{L^2}$ to put the highest order $q$ in $L^2$.  In the instance that there are no factors of $q$, we put the highest order $W$ term in $L^2$ and use that $W'$ is in $H^{s-1}$.  We then estimate all other terms in operator norm; the remaining factors of $q$ have at most $s-1$ derivatives, and thus may be estimated in $L^\infty$ via the embedding $H^1\hookrightarrow L^\infty$.  This yields
	\begin{align*}
	\tx{RHS}\eqref{eq:tail 1}
	\lesssim \kappa^5 \sum_{\substack{ \ell\geq 0,\ m_0,\dots,m_\ell\geq 0 \\ \ell+m_0+\dots+m_\ell \geq 3 }}\ &\frac{ \norm{f}_{H^{-1}} }{\kappa^{1/2}} \frac{ \max\{ \norm{q}_{H^s} , \norm{W'}_{H^{s-1}} \} }{\kappa^{3/2}} \\[-1.5em]
	&\times \left( \frac{ \max\{ \norm{q}_{H^s} , \norm{W}_{W^{s,\infty}} \} }{\kappa^{2}} \right)^{\ell + m_0+\dots+m_\ell - 1} .
	\end{align*}
	We re-index $m = m_0+\dots+m_\ell$ and sum over $\ell + m \geq 3$ as in~\eqref{eq:double sum inner}.  The sum converges provided $\kappa \gg \norm{q}_{H^s}^{1/2}$ and $\kappa \gg \norm{W}_{W^{s,\infty}}^{1/2}$.  The condition $\ell+m\geq 3$ guarantees that when we sum over the parenthetical term we gain a factor $\lesssim (\kappa^{-2})^2$, and so we obtain
	\begin{equation*}
	\tx{RHS}\eqref{eq:tail 1}
	\lesssim \kappa^{-1} \norm{f}_{H^{-1}}
	\end{equation*}
	uniformly for $\norm{q}_{L^2} \leq A$ and $\kappa \geq \kappa_0(A)$.  The claim~\eqref{eq:tail conv} follow by taking a supremum over $\norm{f}_{H^{-1}} \leq 1$.
\end{proof}

\section{Tidal \texorpdfstring{$H_\kappa$}{H-kappa} flow}
\label{sec:thk}

The argument of~\cite{Killip2019} relies upon the Hamiltonians $H_\kappa$ whose flows approximate that of KdV as $\kappa\to\infty$.  Specifically, in~\cite[Prop.~3.2]{Killip2019} it is shown that the $H_\kappa$ flow can be expressed in terms of the diagonal Green's function as
\begin{equation}
\ddt u = 16\kappa^5 g'(\kappa,u) + 4\kappa^2 u' .
\label{eq:hk flow}
\end{equation}
Moreover, the flows at any two energy parameters $\kappa$ and $\varkappa$ commute:
\begin{equation}
\{ H_\kappa , H_\varkappa \} = 0 .
\label{eq:hk flow 2}
\end{equation}

We need an analogous approximate flow for step-like initial data.  Mimicking how we obtained tidal KdV from KdV, we subtract the background $W$ from the function $u$ to obtain the tidal $H_\kappa$ flow
\begin{equation}
\ddt q = 16\kappa^5 g'(\kappa,q+W) + 4\kappa^2 (q+W)'
\label{eq:thk}
\end{equation}
for $q := u-W$.  The tidal $H_\kappa$ flow is also Hamiltonian; however, we will not make use of its Hamiltonian.

In this section we will show that the tidal $H_\kappa$ flow is globally well-posed in $H^s$ for all integers $s\geq 0$.  We restrict our attention to integer $s$ since the result for non-integer $s\geq 0$ follows from interpolation.  Once we obtain well-posedness, the commutativity~\eqref{eq:hk flow 2} of the $H_\kappa$ flows implies that any two tidal $H_\kappa$ flows commute with each other.

We begin with local well-posedness.  The $H_\kappa$ flows are easier to work with because local well-posedness follows from a contraction mapping argument.
\begin{lem}
	\label{thm:thk lwp}
	Given an integer $s\geq -1$ and $A>0$, there exists a constant $\kappa_0$ so that for $\kappa \geq \kappa_0$ the tidal $H_\kappa$ flows~\eqref{eq:thk} with initial data in the closed ball $B_A \subset H^s(\R)$ of radius $A$ are locally well-posed.
\end{lem}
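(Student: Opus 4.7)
The plan is to view the tidal $H_\kappa$ flow~\eqref{eq:thk} as an inhomogeneous semilinear ODE on $H^{s}(\R)$. The explicit term $4\kappa^{2} q'$ on the right-hand side loses a derivative and cannot be treated directly as a perturbation; the first step is to absorb it into a transport semigroup by rewriting the equation as
\begin{equation*}
\partial_{t} q = L_{0} q + \Psi(q), \qquad L_{0} := 4\kappa^{2} \partial_{x}, \qquad \Psi(q) := 16\kappa^{5} g'(\kappa, q+W) + 4\kappa^{2} W' ,
\end{equation*}
where $L_{0}$ generates the isometric translation group $e^{tL_{0}}f(x) = f(x + 4\kappa^{2}t)$ on every $H^{s}(\R)$.

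The key observation is that, although $\Psi$ involves the nonlinear functional $g$ of $q$, the two-derivative regularity gain from \cref{thm:diffeo prop} ensures that $\Psi$ maps each ball $B_{A} \subset H^{s}(\R)$ into $H^{s}(\R)$ in a Lipschitz fashion, provided $\kappa \geq \kappa_{0}(A)$ is sufficiently large. Indeed, the estimate~\eqref{eq:diffeo prop} says that $q \mapsto g(\kappa, q+W) - g(\kappa, W)$ is real-analytic as a map $B_{A} \to H^{s+2}_{\kappa}$, so after differentiating once in $x$ and multiplying by $16\kappa^{5}$ we obtain an element of $H^{s+1}_{\kappa}\hookrightarrow H^{s}(\R)$ (continuously for $\kappa\geq 1$ and $s\geq -1$), with Lipschitz dependence on $q$ on bounded balls. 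The $q$-independent background $16\kappa^{5} g'(\kappa,W) + 4\kappa^{2} W'$ is a fixed element of $H^{s}(\R)$, estimated by direct expansion of the series~\eqref{eq:g series 1} in the same spirit as the proof of \cref{thm:diffeo prop}.

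With these ingredients in hand, I would apply the Banach fixed-point theorem to the Duhamel formulation
\begin{equation*}
q(t) = e^{tL_{0}} q(0) + \int_{0}^{t} e^{(t-s)L_{0}} \Psi(q(s)) \ds
\end{equation*}
in a closed ball of $C([-T,T];H^{s}(\R))$ endowed with the supremum-in-time norm. Because $e^{tL_{0}}$ is an isometry on $H^{s}(\R)$ and $\Psi$ is bounded and Lipschitz on each bounded ball of $H^{s}(\R)$ with constants depending only on $\kappa$, $A$, and $W$, choosing $T$ small enough (in terms of these constants) makes the map a strict contraction; its unique fixed point is the sought local solution, and continuous dependence on the initial data follows from the standard fixed-point argument. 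The main obstacle throughout is exactly the loss-of-derivative phenomenon: at face value the RHS of~\eqref{eq:thk} is not a Lipschitz vector field on $H^{s}(\R)$, so one cannot naively invoke Picard--Lindel\"of. Extracting the transport operator $L_{0}$, whose flow preserves $H^{s}$-norms exactly, is the device that makes the remaining nonlinear piece controllable via \cref{thm:diffeo prop}.
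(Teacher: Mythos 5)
Your argument is correct and is essentially the paper's proof: the paper likewise passes to the Duhamel formulation with the translation group $e^{t4\kappa^2\partial_x}$ and closes a contraction using the Lipschitz bound $\norm{g'(\kappa,q+W)-g'(\kappa,\tilde q+W)}_{H^s}\lesssim\norm{q-\tilde q}_{H^s}$ supplied by the two-derivative gain of \cref{thm:diffeo prop}. The only point the paper makes more explicit is how the Lipschitz property follows from real-analyticity, namely by bounding the differential $f\mapsto d[g(\kappa,\cdot+W)]|_q(f)$ as a map $H^s\to H^{s+2}$ uniformly over the ball via the same resolvent-series expansion.
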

\begin{proof}
	Fix an integer $s\geq -1$.  The solution $q(t)$ to the tidal $H_\kappa$ flow satisfies the integral equation
	\begin{equation*}
	q(t) = e^{t4\kappa^2\partial_x} q(0) + \int_0^t e^{(t-\tau) 4\kappa^2 \partial_x} \left[ 16\kappa^5 g'(\kappa,q(\tau) + W) + 4\kappa^2 W' \right]\,\dd\tau .
	\end{equation*}
	A contraction mapping argument proves local well-posedness, provided we have the Lipschitz estimate
	\begin{align*}
	&\norm{ g'(\kappa,q + W) - g'(\kappa,\tilde{q} + W) }_{H^s} \\
	&\lesssim \norm{ [g(\kappa,q + W) - g(\kappa,W)] - [g(\kappa,\tilde{q} + W) - g(\kappa,W)] }_{H^{s+2}} 
	\lesssim \norm{ q - \tilde{q} }_{H^s}
	\end{align*}
	uniformly on bounded subsets of $H^s$.
	
	Fix $A>0$.  It suffices to show that $f \mapsto \dd [ g(\kappa, \cdot + W)] |_q (f)$ is bounded $H^{s} \to H^{s+2}$ uniformly for $\norm{q}_{H^s} \leq A$.  Using the resolvent identity we calculate
	\begin{equation*}
	d[ g(\kappa, \cdot + W) ]|_q (f)
	= - \langle \del_x , R(\kappa,q+W) f R(\kappa,q+W) \del_x \rangle .
	\end{equation*}
	Just as we did for the single resolvent $\langle \del_x , R(\kappa,q+W) \del_x \rangle$ in~\eqref{eq:diffeo prop}, we estimate the first $s+1$ derivatives in $H^1$ by duality and expand each resolvent into a series.  We conclude that there exists a constant $\kappa_0$ such that
	\begin{equation*}
	\norm{ d[g(\kappa, \cdot + W) ]|_q (f) }_{H^{s+2}} \lesssim \norm{f}_{H^{s}}
	\end{equation*}
	uniformly for $q\in B_A$ and $\kappa\geq \kappa_0$.	
\end{proof}

In order to obtain global well-posedness, we will prove \ti{a priori} estimates in $H^s$ for all integers $s\geq 0$.  Our energy arguments are inspired by those of Bona and Smith~\cite{Bona1975}.  The family of BBM equations which Bona--Smith uses to approximate the KdV flow does not conserve the polynomial conserved quantities of KdV.  One benefit of our method is that in the case $W\equiv 0$, the $H_\kappa$ flows do conserve these quantities (as is suggested by the asymptotic expansion~\eqref{eq:alpha intro} and Poisson commutativity), and consequently the \ti{a priori} estimates are identical to that of KdV.  In particular, in the case $W\equiv 0$ we obtain a new proof of the Bona--Smith theorem using the low-regularity methods from~\cite{Killip2019}.  (This is not subsumed by~\cite[Cor.~5.3]{Killip2019}, which only addresses $H^s(\R)$ for $s\in [-1,0)$.)

Our energy arguments are much simplified in the case $\kappa = \infty$, where the tidal $H_\kappa$ flow becomes tidal KdV.  Our manipulations are motivated by the corresponding tidal KdV terms at $\kappa = \infty$, where operations involving commutators and cycling the trace correspond to more elementary operations involving integration by parts.  In particular, the reason for the restriction $s\geq 3$ is the same as in~\cite{Bona1975}: when estimating $\ddt \snorm{ q^{(s)}(t) }_{L^2}^2$ under the KdV flow, $s=3$ is the smallest integer for which the nonlinear contribution can be estimated in terms of $\snorm{ q^{(s)}(t) }_{L^2}^2$ provided that we already control $q(t)$ in $H^{s-1}$.

We begin with $s=0$:
\begin{prop}
	\label{thm:a priori L2}
	Given $A,T>0$ there exist constants $C$ and $\kappa_0$ such that solutions $q_\kappa(t)$ to the tidal $H_\kappa$ flow~\eqref{eq:thk} obey
	\begin{equation*}
	\norm{q(0)}_{L^2} \leq A
	\quad\implies\quad
	\norm{q_\kappa(t)}_{L^2} \leq C\quad\tx{for all }|t|\leq T\tx{ and }\kappa \geq \kappa_0 .
	\end{equation*}
\end{prop}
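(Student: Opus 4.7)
Since $W$ is time-independent, the tidal $H_\kappa$ flow equation $\ddt q = \partial_x[16\kappa^5 g(\kappa, q+W) + 4\kappa^2(q+W)]$ gives
\begin{equation*}
    \ddt \tfrac{1}{2}\|q\|_{L^2}^2
    = - \int q'(x) \cdot M_\kappa(q+W)(x)\dx,
    \qquad M_\kappa(u) := 16\kappa^5 g(\kappa,u) + 4\kappa^2 u,
\end{equation*}
after integrating by parts (the boundary terms vanish since $q$ decays).  My plan is to establish the differential inequality $\ddt\|q\|_{L^2}^2 \leq C_T(1 + \|q\|_{L^2}^2)$ with constants independent of $\kappa \geq \kappa_0$ and conclude by Gronwall on $[-T,T]$.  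Note $M_\kappa$ is finite on step-like data: from the series \eqref{eq:g series 1} and \eqref{eq:resolvent series 2}, both $M_\kappa(W)$ and $M_\kappa(q+W) - M_\kappa(W)$ approach (finite) constants at $\pm\infty$.

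I would split $M_\kappa(q+W) = M_\kappa(W) + [M_\kappa(q+W) - M_\kappa(W)]$ and treat the two pieces separately.  For the first, integration by parts rewrites $\int q' M_\kappa(W) \dx = -\int q\, M_\kappa(W)' \dx$, so it suffices to show $\|M_\kappa(W)'\|_{L^2} \lesssim 1$ uniformly in $\kappa$ large, since then $|\int q' M_\kappa(W)\dx| \lesssim \|q\|_{L^2}$.  The bound $\|M_\kappa(W)'\|_{L^2} \lesssim 1$ is where the definition \eqref{eq:hk intro} of $H_\kappa$ pays off: the additive $4\kappa^2 W'$ is designed precisely to cancel the leading $O(\kappa^2)$ contribution from $16\kappa^5 g'(\kappa,W)$.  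Concretely, substituting $g(\kappa,W) = \tfrac{1}{2\kappa} + O(\kappa^{-3})$ from \eqref{eq:g series 1}, or more systematically applying the operator identity \eqref{eq:linear op id} to the linear-in-$W$ piece of $16\kappa^5 g$, one sees that the $4\kappa^2 W'$ term combines with the $-4\kappa^2 W'$ arising from the linear term of $16\kappa^5 g'(\kappa,W)$, so that $M_\kappa(W)'$ is expressed via bounded operators applied to $W',W'',W''',\ldots$ (which decay exponentially).

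For the second piece, the estimate $|\int q'[M_\kappa(q+W) - M_\kappa(W)]\dx| \lesssim 1 + \|q\|_{L^2}^2$ requires essentially the same cancellation at linear order in $q$.  Expanding via \eqref{eq:resolvent series 2} and the identity \eqref{eq:linear op id}, the linear-in-$q$ contribution to $16\kappa^5[g(\kappa,q+W) - g(\kappa,W)]$ has principal part $-[4\kappa^2 + \partial^2 + R_0(2\kappa)\partial^4]q$, whose $4\kappa^2 q$ term cancels the additive $4\kappa^2 q$ in $M_\kappa(q+W) - M_\kappa(W)$.  After this cancellation, what remains is a combination of: (i) $-q'' - R_0(2\kappa)q''''$, controllable by $\|q\|_{L^2}$ after one more integration by parts against $q'$; (ii) $W$-corrections to the linear-in-$q$ term (from the $W$-dependence of $R(\kappa,W)$ in \eqref{eq:resolvent series 2}), bounded via \cref{thm:diffeo prop} by $\kappa^{-1}\|q\|_{L^2}$ in a relevant norm; and (iii) the genuinely nonlinear pieces of $g(\kappa,q+W) - g(\kappa,W)$, bounded in $H^2_\kappa$ using \cref{thm:diffeo prop} and \eqref{eq:hskappa identity 2} by $\kappa^{-1}\|q\|_{L^2}^2$, producing $O(\|q\|_{L^2}^2)$ contributions once tested against $q'$.

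The main obstacle is bookkeeping the $O(\kappa^4)$ and $O(\kappa^2)$ pieces to verify the cancellations at each stage; the situation is delicate because taken individually the terms $16\kappa^5\int q' g \dx$ and $4\kappa^2 \int q'(q+W)\dx$ blow up as $\kappa\to\infty$, while the combination must remain finite and mirror the a priori bound one would obtain for tidal KdV ($\kappa=\infty$) itself.  Once the differential inequality is in place, Gronwall and the local well-posedness from \cref{thm:thk lwp} together give global-in-time $L^2$ control uniform in $\kappa \geq \kappa_0$.
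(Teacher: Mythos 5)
Your overall strategy -- an energy estimate for $\tfrac12\norm{q}_{L^2}^2$, exploiting the cancellation between the additive $4\kappa^2(q+W)'$ and the linear part of $16\kappa^5 g'$, expanding $g$ in powers of $q$ and $W$, and closing with Gr\"onwall -- is exactly the paper's. The linear-order analysis (your first piece $M_\kappa(W)$ and the cancellation of $4\kappa^2 q$ against the leading term of \eqref{eq:linear op id}) is correct, with one caveat: the leftover $\int q'\,[{-q''}-R_0(2\kappa)q'''']\dx$ is not ``controllable by $\norm{q}_{L^2}$ after integration by parts'' in the sense of a genuine bound -- the operator $\partial^5 R_0(2\kappa)$ is unbounded on $L^2$ -- rather it vanishes identically because its symbol is odd; you need to say that, and it is what the paper says for \eqref{eq:a priori L2 1}.

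The genuine gap is in items (ii) and (iii): the quadratic-order contributions cannot be dispatched by the norm bounds of \cref{thm:diffeo prop} and \eqref{eq:hskappa identity 2}, because the prefactor $16\kappa^5$ overwhelms them. Concretely, the pure quadratic term gives $16\kappa^5\tr\{q'R_0qR_0qR_0\}$; estimating each factor via \eqref{eq:hskappa identity 1} yields only $\lesssim\kappa^{5}\cdot\kappa^{-1/2}\norm{q}_{L^2}\cdot(\kappa^{-3/2}\norm{q}_{L^2})^2=\kappa^{3/2}\norm{q}_{L^2}^3$, which diverges in $\kappa$ and is cubic (not quadratic) in $\norm{q}_{L^2}$, so it is incompatible with a Gr\"onwall inequality of the form $|\dot E_0|\leq C(E_0+1)$. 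The correct mechanism is that this trace (and every pure power $\tr\{q'R_0(qR_0)^\ell\}$) \emph{vanishes identically}, by writing $q'=[\partial,q]$ and cycling the trace (equivalently, translation invariance); this is the noncommutative avatar of $\int q\,\partial(q^2)\dx=0$. Similarly, the mixed term $16\kappa^5\tr\{q'(R_0WR_0qR_0+R_0qR_0WR_0)\}$ is $O(\kappa\norm{q}_{L^2}^2)$ by brute force; one must first cycle the trace to collapse it to $16\kappa^5\tr\{qR_0qR_0[\partial,W]R_0\}$, trading the derivative on $q$ for $W'$, after which the bound $\lesssim\norm{W'}_{L^\infty}\norm{q}_{L^2}^2$ follows. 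Even for the cubic-and-higher tail, the brute-force estimate only closes after one inserts the condition that at least one factor of $W$ is present (again because the pure-$q$ summands vanish by cyclicity), and after allowing $\kappa_0$ to depend on $E_0(t)$ with a bootstrap at the end. These trace-cyclicity cancellations are the heart of the proposition and are absent from your plan; without them the differential inequality does not close.
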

\begin{proof}
	By approximation and local well-posedness we may assume that $q(0) \in H^\infty$.  Let
	\begin{equation*}
	E_0(t) := \tfrac{1}{2} \int q_\kappa(t,x)^2 \dx .
	\end{equation*}
	This is the first polynomial conserved quantity of the KdV hierarchy, and in the case $W\equiv 0$ one can directly show that $\ddt E_0 = 0$ under the $H_\kappa$ flow using the ODE~\eqref{eq:g alg prop 3} satisfied by the diagonal Green's function.
	
	To counteract the factor of $\kappa^5$ in the tidal $H_\kappa$ flow and obtain a bound for all $\kappa$ large, we will extract the linear and quadratic terms.  Using the translation identity~\eqref{eq:g trans prop 2}, we write
	\begin{align}
	&\ddt E_0
	\nonumber \\
	&= \int q_\kappa \big\{ {-16}\kappa^5 \langle\del_x, R_0q'_\kappa R_0 \del_x \rangle + 4\kappa^2 q'_\kappa \big\} \dx
	\label{eq:a priori L2 1} \\
	&\phantom{={}}+ \int q_\kappa \big\{ {-16}\kappa^5 \langle\del_x, R_0W'R_0 \del_x \rangle + 4\kappa^2 W' \big\} \dx 
	\label{eq:a priori L2 2} \\
	&\phantom{={}}+ 16\kappa^5 \int q_\kappa \langle\del_x, [\partial, R_0q_\kappa R_0q_\kappa R_0] \del_x \rangle \dx
	\label{eq:a priori L2 3} \\
	&\phantom{={}}+ 16\kappa^5 \int q_\kappa \big\{ \langle\del_x, [\partial, R_0WR_0q_\kappa R_0] \del_x \rangle + \langle\del_x, [\partial, R_0q_\kappa R_0WR_0] \del_x \rangle \big\} \dx
	\label{eq:a priori L2 4} \\
	&\phantom{={}}+ 16\kappa^5 \int q_\kappa \langle\del_x, [\partial, R_0W R_0W R_0] \del_x \rangle \dx
	\label{eq:a priori L2 6} \\
	&\begin{aligned}
	\phantom{={}}+ 16\kappa^5 \int q_\kappa \big\{ g(\kappa,q_\kappa+W) &+ \langle \del_x , R_0(q_\kappa+W)R_0 \del_x \rangle \\[-0.5em]
	&- \langle \del_x , R_0(q_\kappa+W)R_0(q_\kappa+W)R_0 \del_x \rangle \big\}' \dx .
	\end{aligned}
	\label{eq:a priori L2 5}
	\end{align}
	We will estimate the terms \eqref{eq:a priori L2 1}--\eqref{eq:a priori L2 5} separately.
	
	The first linear contribution~\eqref{eq:a priori L2 1} vanishes.  Indeed, using the first operator identity of~\eqref{eq:linear op id} we write
	\begin{equation*}
	\eqref{eq:a priori L2 1}
	= \int q_\kappa \big\{ {-16}\kappa^4 R_0(2\kappa) q'_\kappa + 4\kappa^2 q'_\kappa \big\} \dx .
	\end{equation*}
	This vanishes because the integrand is odd in Fourier variables, or equivalently the integrand is a total derivative.
	
	Now we estimate the linear contribution~\eqref{eq:a priori L2 2} from $W$.  Using the operator identity~\eqref{eq:linear op id} we write
	\begin{align*}
	|\eqref{eq:a priori L2 2}|
	&= \left| \int q_\kappa \big\{ - W''' - \big[ R_0(2\kappa) W^{(5)} \big] \big\} \dx \right| \\
	&\lesssim \norm{ q_\kappa }_{L^2} \big( \norm{W'''}_{L^2} + \kappa^{-2} \snorm{W^{(5)}}_{L^2} \big)
	\lesssim E_0^{1/2} \lesssim E_0 + 1 .
	\end{align*}
	Note that $W'$ is Schwartz, and we allow our implicit constants to depend on the fixed function $W$.
	
	The first quadratic contribution~\eqref{eq:a priori L2 3} also vanishes.  Distributing the derivative $[\partial,\cdot]$ and noting that $[\partial,R_0] = 0$, we write
	\begin{equation*}
	\eqref{eq:a priori L2 3}
	= 16\kappa^5 \big( \tr\{ q_\kappa R_0 [\partial,q_\kappa] R_0 q_\kappa R_0 \} + \tr\{ q_\kappa R_0q_\kappa R_0 [\partial,q_\kappa] R_0 \} \big) .
	\end{equation*}
	Both of these terms vanish by cycling the trace.
	
	Next we turn to the second quadratic contribution~\eqref{eq:a priori L2 4}.  By linearity and cycling the trace, we can ``integrate by parts'' to write
	\begin{align*}
	\eqref{eq:a priori L2 4}
	&= 16\kappa^5 \big( {- \tr}\{ [\partial,q_\kappa] R_0WR_0 q_\kappa R_0 \} + \tr\{ q_\kappa [ \partial , R_0q_\kappa R_0WR_0 ] \}  \big) \\
	&= 16\kappa^5 \tr\{ q_\kappa R_0q_\kappa R_0[\partial,W]R_0 \} .
	\end{align*}
	Using the estimate~\eqref{eq:hskappa identity 1} and the observations $\snorm{ \sqrt{R_0} }_\op \lesssim \kappa^{-1}$ and $\norm{f}_{H^{-1}_\kappa} \lesssim \kappa^{-1} \norm{f}_{L^2}$, we estimate
	\begin{equation*}
	|\eqref{eq:a priori L2 4}|
	\lesssim \kappa^5 \norm{ \sqrt{R_0} q_\kappa \sqrt{R_0} }_{\I_2}^2 \norm{ \sqrt{R_0} W' \sqrt{R_0} }_\op
	\lesssim \norm{W'}_{L^\infty} E_0 .
	\end{equation*}

	The quadratic $W$ contribution~\eqref{eq:a priori L2 6} is easily estimated.  We distribute the derivative and estimate
	\begin{equation*}
	|\eqref{eq:a priori L2 6}|
	\lesssim \kappa^5 \norm{ \sqrt{R_0} q_\kappa \sqrt{R_0} }_{\I_2} \norm{ \sqrt{R_0} W' \sqrt{R_0} }_{\I_2} \norm{ \sqrt{R_0} W \sqrt{R_0} }_{\op} .
	\end{equation*}
	Using the identity~\eqref{eq:hskappa identity 1} and the observation $\norm{f}_{H^{-1}_\kappa} \lesssim \kappa^{-1} \norm{f}_{L^2}$, we obtain
	\begin{equation*}
	|\eqref{eq:a priori L2 6}|
	\lesssim E_0^{1/2} \lesssim E_0 + 1 .
	\end{equation*}
	
	For the series tail~\eqref{eq:a priori L2 5}, we integrate by parts once to put the derivative on $q_\kappa$ and we write
	\begin{align*}
	|\eqref{eq:a priori L2 5}|
	&\leq 16\kappa^5 \sum_{\substack{ \ell\geq 0,\ m_0,\dots,m_\ell\geq 0 \\ \ell+m_0+\dots+m_\ell \geq 3 }} \big| \tr\big\{ q'_\kappa R_0 (WR_0)^{m_0} qR_0 \cdots qR_0 (WR_0)^{m_\ell} ] \big\} \big| .
	\intertext{Observe that the summand vanishes for $m_0 + \dots + m_\ell = 0$ by writing $q'_\kappa = [\partial,q_\kappa]$ and cycling the trace, and so we may insert the condition $m_0+\dots+m_\ell \geq 1$ in the summation.  We use the operator estimate~\eqref{eq:hskappa identity 1} and the observation $\norm{ f }_{H^{-1}_\kappa} \lesssim \kappa^{-1} \norm{ f }_{L^2}$ to put each factor of $q$ in $L^2$, and we put all other factors in operator norm:}
	&\lesssim \kappa^5 \sum_{\substack{ \ell\geq 0,\ m_0+\dots+m_\ell \geq 1 \\ \ell+m_0+\dots+m_\ell \geq 3 }} \frac{ \norm{q'_\kappa}_{H^{-1}} }{\kappa^{1/2}} \left( \frac{ \norm{q}_{L^2} }{\kappa^{3/2}} \right)^{\ell} \left( \frac{ \norm{W}_{L^\infty} }{\kappa^{2}} \right)^{m_0+\dots+m_\ell} .
	\intertext{We split the sum into $\ell = 0$, $\ell=1$, $\ell=2$, and $\ell\geq 3$ terms.  We then re-index $m = m_0+\dots+m_\ell$, sum over $m\geq 1$ as in~\eqref{eq:double sum inner}, and then sum in $\ell$.  The sum converges provided $\kappa \gg E_0^{1/3}(t)$ and $\kappa \gg \norm{W}_{L^\infty}^{1/2}$.  The conditions $m\geq 1$ and $\ell+m\geq 3$ guarantee that when we sum over the two parenthetical terms we gain a factor $\lesssim (\kappa^{-3/2})^2 (\kappa^{-2})$, and so we obtain}
	&\lesssim \kappa^{-1/2} \norm{q_\kappa}_{L^2}
	\end{align*}
	for all $\kappa$ large.  Taking a supremum over $\norm{f}_{H^{-1}} \leq 1$ and restricting to $\kappa$ sufficiently large, we conclude there exists $\kappa_0(E_0(t))$ such that
	\begin{equation*}
	| \eqref{eq:a priori L2 5} |
	\leq E_0^{1/2} \lesssim E_0 + 1
	\quad\tx{uniformly for }\kappa \geq \kappa_0(E_0(t)) .
	\end{equation*}
	
	Altogether, we have shown that there exist constants $C$ and $\kappa_0(E_0(t))$ such that
	\begin{equation*}
	\left| \ddt E_0 \right| \leq C( E_0 + 1 )
	\quad\tx{uniformly for }|t|\leq T\tx{ and }\kappa \geq \kappa_0(E_0(t)) .
	\end{equation*}
	Gr\"onwall's inequality then yields the bound
	\begin{equation*}
	E_0(t) \leq (E_0(0) + 1) e^{CT} - 1
	\quad\tx{uniformly for }|t|\leq T,\ \kappa \geq \kappa_0\big( (E_0(0) + 1) e^{CT} - 1 \big) ,
	\end{equation*}
	which concludes the proof.
\end{proof}

Next, we control the growth of the $H^1$ norm:
\begin{prop}
	\label{thm:a priori H1}
	Given $A,T>0$ there exist constants $C$ and $\kappa_0$ such that solutions $q_\kappa(t)$ to the tidal $H_\kappa$ flow~\eqref{eq:thk} obey
	\begin{equation*}
	\norm{q(0)}_{H^1} \leq A
	\quad\implies\quad
	\norm{q_\kappa(t)}_{H^1} \leq C\quad\tx{for all }|t|\leq T\tx{ and }\kappa \geq \kappa_0 .
	\end{equation*}
\end{prop}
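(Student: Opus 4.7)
The plan is to carry out the same strategy as \cref{thm:a priori L2}, but working at the level of the KdV energy rather than the momentum. I would take as the energy functional
\[
E_1(t) := \int \Bigl(\tfrac{1}{2} q_\kappa'(t,x)^2 + q_\kappa(t,x)^3\Bigr)\,dx,
\]
which is well-defined and smooth on $H^1(\R)$.  Thanks to the Gagliardo--Nirenberg inequality $\norm{q_\kappa}_{L^3}^3 \lesssim \norm{q_\kappa'}_{L^2}^{1/2}\norm{q_\kappa}_{L^2}^{5/2}$ combined with the uniform $L^2$ bound from \cref{thm:a priori L2}, the functional $E_1$ is coercive on the time interval $|t|\leq T$, in the sense that $\norm{q_\kappa(t)}_{H^1}^2 \lesssim E_1(t) + C$ and $E_1(t) \lesssim \norm{q_\kappa(t)}_{H^1}^2 + C$ for suitable constants depending on $A$ and $T$.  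It therefore suffices to control the growth of $E_1$.

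By approximation and local well-posedness I may assume $q(0)\in H^\infty$, and I would then compute
\[
\tfrac{d}{dt} E_1 = \int (-q_\kappa'' + 3 q_\kappa^2)\,\bigl[16\kappa^5 g'(\kappa,q_\kappa+W) + 4\kappa^2(q_\kappa+W)'\bigr]\,dx.
\]
The crucial structural observation is that when $W\equiv 0$, $E_1 = \hkdv(q_\kappa)$ is conserved by the $H_\kappa$ flow, since $\hkdv$ and $H_\kappa$ Poisson commute by the very construction of the generating function $\alpha(\kappa,\cdot)$ appearing in~\eqref{eq:alpha intro}.  Consequently, after expanding the diagonal Green's function through~\eqref{eq:resolvent series 1}--\eqref{eq:resolvent series 2}, every monomial in the integrand that involves only $q_\kappa$ and not $W$ must cancel identically against the corresponding contribution from the transport term $4\kappa^2 q_\kappa'$.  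What remains is a collection of terms, each carrying at least one factor of $W$ or its derivatives.  Following the decomposition of \cref{thm:a priori L2}, I split these into (i) the linear contribution, handled by the operator identity~\eqref{eq:linear op id}; (ii) the quadratic contributions (both pure $W^2$ and cross terms with $q_\kappa$), handled by~\eqref{eq:quad op id}; and (iii) the series tail, bounded using the template of~\eqref{eq:tail conv}.

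Each of these pieces is estimated following the $L^2$ template: the highest-derivative factor of $q_\kappa$ is placed in $L^2$ (costing $\norm{q_\kappa}_{H^1}\lesssim E_1^{1/2}+C$), remaining factors of $q_\kappa$ are placed in $L^\infty$ via $H^1\hookrightarrow L^\infty$ combined with the $L^2$ bound, and factors of $W$ and its derivatives are placed in $L^\infty$ (using that $W'$ is Schwartz).  Resolvents are controlled via~\eqref{eq:hskappa identity 1} and $\snorm{\sqrt{R_0}}_{\op}\lesssim \kappa^{-1}$, producing enough factors of $\kappa^{-1}$ to defeat the $\kappa^5$ prefactor.  The cubic multiplier $3q_\kappa^2$ contributes at most $\norm{q_\kappa}_{L^\infty}^2 \lesssim \norm{q_\kappa}_{H^1}$ after interpolation with the $L^2$ bound, which combined with an $\norm{q_\kappa}_{H^1}$ factor from the nonlinearity yields a bound $\lesssim E_1 + 1$ after Young's inequality.

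The main obstacle is the algebraic bookkeeping needed to verify the cancellation of the pure-$q_\kappa$ monomials; rather than invoking Poisson commutativity abstractly, it seems cleanest to verify the cancellation explicitly through repeated use of trace cyclicity, integration by parts, and the ODE~\eqref{eq:g alg prop 3} satisfied by the diagonal Green's function, as was done implicitly in the proof of \cref{thm:a priori L2}.  A secondary concern is that the cubic multiplier $3q_\kappa^2$ threatens to generate quartic expressions in $\norm{q_\kappa}_{H^1}$, which must be tamed via Gagliardo--Nirenberg together with the $L^2$ bound from \cref{thm:a priori L2}.  Once $|\tfrac{d}{dt}E_1|\leq C(E_1+1)$ has been established uniformly for $|t|\leq T$ and $\kappa\geq\kappa_0$, Gr\"onwall's inequality closes the argument exactly as in \cref{thm:a priori L2}.
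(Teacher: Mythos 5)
Your overall architecture matches the paper's: the same energy $E_1$, the same coercivity via Gagliardo--Nirenberg and the $L^2$ bound of \cref{thm:a priori L2}, the same structural observation that the pure-$q_\kappa$ part of $\ddt E_1$ vanishes identically (the paper realizes this through the identities \eqref{eq:g alg prop 1}--\eqref{eq:g alg prop 2} and trace cyclicity rather than by citing Poisson commutativity, but the content is the same), and the same Gr\"onwall endgame. The genuine gap is in your estimation scheme for the surviving $W$-dependent terms. The recipe ``place the highest-derivative factor of $q_\kappa$ in $L^2$, costing $\norm{q_\kappa}_{H^1}$'' cannot be carried out as stated: the multiplier $-q_\kappa''+3q_\kappa^2$ carries two derivatives, and the expansion of $16\kappa^5 g'(\kappa,q_\kappa+W)$ carries a fifth power of $\kappa$, so the surviving monomials contain more derivatives of $q_\kappa$ than $H^1$ controls. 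Concretely, the quadratic cross term produces $16\kappa^5\tr\{q_\kappa'' R_0 W R_0 q_\kappa' R_0\}$, and the best direct Schatten-norm count (via \eqref{eq:hskappa identity 1}, $\norm{q_\kappa''}_{H^{-1}_\kappa}\leq\norm{q_\kappa'}_{L^2}$, $\norm{q_\kappa'}_{H^{-1}_\kappa}\lesssim\kappa^{-1}\norm{q_\kappa'}_{L^2}$, and $\snorm{\sqrt{R_0}\,W\sqrt{R_0}}_{\op}\lesssim\kappa^{-2}\norm{W}_{L^\infty}$) gives only $\kappa\,\norm{W}_{L^\infty}\norm{q_\kappa'}_{L^2}^2$ --- off by a full power of $\kappa$.

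The paper closes this by two devices that your proposal does not supply. First, it applies \eqref{eq:g alg prop 2} with $f$ essentially equal to the variational derivative $-q_\kappa''+3q_\kappa^2$, so that the third-order operator $-\partial^3+2(q_\kappa+W)\partial+2\partial(q_\kappa+W)+4\kappa^2\partial$ is absorbed into the Green's-function algebra; what remains, \eqref{eq:a priori H1 1}--\eqref{eq:a priori H1 3}, carries only the multipliers $q_\kappa'$ and $Wq_\kappa'+(Wq_\kappa)'$, never a bare $q_\kappa''$ against the $\kappa^5$-weighted expansion. Second, even after this reduction one borderline term survives in \eqref{eq:a priori H1 8}, namely $\int Wq_\kappa''\,[R_0(2\kappa)\partial^2 q_\kappa'']\dx$, which no assignment of factors to $L^2$ and $L^\infty$ controls; the paper pairs it with its companion $\int Wq_\kappa'\,[R_0(2\kappa)\partial^2 q_\kappa'']\dx$ and symmetrizes in Fourier variables so that the dangerous derivative lands on $W$, yielding the bound $\snorm{\wh{W'}}_{L^1}\snorm{q_\kappa'}_{L^2}^2$. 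Similarly, the two cross traces coming from $R_0WR_0q_\kappa R_0$ and $R_0q_\kappa R_0WR_0$ must be combined by cycling the trace so that the net derivative falls on $W$; individually they are not acceptable. These cancellations are the real content of the proof: the tools you reserve for verifying the pure-$q_\kappa$ cancellation (trace cyclicity, integration by parts, the Green's function identities) are equally indispensable for estimating the $W$-dependent remainder, and your write-up should exhibit them there.
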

\begin{proof}
	By approximation and local well-posedness we may assume that $q(0) \in H^\infty$.  Let
	\begin{equation*}
	E_1(t) := \int \big\{ \tfrac{1}{2} (q'_\kappa(t,x))^2 + q_\kappa(t,x)^3 \big\} \dx
	\end{equation*}
	denote the next polynomial conserved quantity of KdV.
	
	We multiply the tidal $H_\kappa$ flow~\eqref{eq:thk} by $-q''_\kappa + 3q_\kappa^2$ and integrate in space to obtain an expression for $\ddt E_1$.  We then integrate by parts to remove the derivative from $g(\kappa,q_\kappa+W) -g(\kappa,W)$, expand both diagonal Green's functions using the relation~\eqref{eq:g alg prop 1}, and apply the identity~\eqref{eq:g alg prop 2} to obtain
	\pagebreak
	\begin{align}
	&\ddt E_1
	\nonumber \\
	&= - \int q''_\kappa \big[ 16\kappa^5 g'(\kappa,W) + 4\kappa^2 W' \big] \dx 
	\label{eq:a priori H1 1}\\
	&\phantom{={}}+ 4\kappa^2 \int \big\{ 3W'q^2_\kappa + 16\kappa^5 q'_\kappa \big[ g(\kappa,q_\kappa+W) - g(\kappa,W) \big] \big\} \dx 
	\label{eq:a priori H1 2}\\
	&\phantom{={}}+ 16\kappa^5 \int \big[ 2Wq'_\kappa + 2(Wq_\kappa)' \big] \big[ g(\kappa,q_\kappa+W) - g(\kappa,W) \big] \dx .
	\label{eq:a priori H1 3}
	\end{align}
	Note that in the case $W\equiv 0$, all three integrals vanish and $E_1$ is conserved as expected.  We will estimate the terms \eqref{eq:a priori H1 1}--\eqref{eq:a priori H1 3} separately.
	
	We begin with the term~\eqref{eq:a priori H1 1}.  We integrate by parts once, expand $g(\kappa,W)$ in a series, and extract the linear term:
	\begin{align*}
	\eqref{eq:a priori H1 1}
	&= \int q'_\kappa \big[ {-16}\kappa^5 \langle \del_x,R_0W''R_0 \del_x \rangle + 4\kappa^2 W'' \big] \dx \\
	&\phantom{={}}+ 16\kappa^5 \sum_{m\geq 2} (-1)^m \tr\big\{ q'_\kappa [\partial^2, R_0 (WR_0)^m] \big\} .
	\end{align*}
	For the first term we use the operator identity~\eqref{eq:linear op id} to estimate
	\begin{align*}
	&\left| \int q'_\kappa \big[ {-16}\kappa^5 \langle \del_x,R_0W''R_0 \del_x \rangle + 4\kappa^2 W'' \big] \dx \right| \\
	&= \left| \int q'_\kappa \big[ - W^{(4)} - R_0(2\kappa) W^{(6)} \big] \dx \right| 
	\lesssim \norm{q'_\kappa}_{L^2} \big( \snorm{ W^{(4)} }_{L^2} + \kappa^{-2} \snorm{ W^{(6)} } \big) .
	\end{align*}
	For the second term we distribute the two derivatives $[\partial^2,\cdot]$, use the estimate~\eqref{eq:hskappa identity 1} and the observation $\norm{ f }_{H^{-1}_\kappa} \lesssim \kappa^{-1} \norm{ f }_{L^2}$ to put $q'_\kappa$ and the highest order $W$ term in $L^2$, and put the remaining terms in operator norm:
	\begin{align*}
	&16\kappa^5 \sum_{m\geq 2} \big| \tr\big\{ q'_\kappa [\partial^2, R_0 (WR_0)^m] \big\} \big| \\
	&\lesssim \kappa^5 \sum_{m\geq 2} m^2 \frac{\norm{q'_\kappa}_{L^2}}{\kappa^{3/2}} \frac{\norm{W'}_{H^1}}{\kappa^{3/2}} \left( \frac{\norm{W}_{W^{1,\infty}}}{\kappa^{2}} \right)^{m-1}
	\lesssim \norm{W'}_{H^1} \norm{W}_{W^{1,\infty}} \norm{ q'_\kappa }_{L^2}
	\end{align*}
	uniformly for $\kappa \gg \norm{W}^{1/2}_{W^{1,\infty}}$.  Altogether we conclude
	\begin{equation*}
	|\eqref{eq:a priori H1 1}| \lesssim \norm{q'_\kappa}_{L^2}^2 + 1
	\end{equation*}
	uniformly for $\kappa$ large.
	
	Next we turn to the term~\eqref{eq:a priori H1 2}.  Expanding $g(\kappa,q_\kappa+W)$ and extracting the terms that are linear and quadratic in $q_\kappa$ and $W$, we write
	\begin{align}
	&\eqref{eq:a priori H1 2}
	\nonumber \\
	&= - 64\kappa^7 \int q'_\kappa \langle \del_x, R_0q_\kappa R_0 \del_x \rangle \dx 
	\label{eq:a priori H1 4p1}\\
	&\phantom{={}}+ 64\kappa^7 \int q'_\kappa \langle \del_x, R_0q_\kappa R_0q_\kappa R_0 \del_x \rangle \dx 
	\label{eq:a priori H1 4p2}\\
	&\begin{aligned}
	\phantom{={}}+ 4\kappa^2\int \big\{ 16\kappa^5 q'_\kappa \big( \langle \del_x, R_0WR_0q_\kappa R_0\del_x \rangle + \langle \del_x, R_0q_\kappa R_0WR_0 \del_x \rangle \big)& \\[-0.5em]
	{}+ 3W'q_\kappa^2& \big\} \dx
	\end{aligned}
	\label{eq:a priori H1 5}\\
	&\begin{aligned}
	\phantom{={}}+ 64\kappa^7 \sum_{\substack{ \ell\geq 1,\ m_0,\dots,m_\ell\geq 0 \\ \ell+m_0+\dots+m_\ell \geq 3 }} (-1)^{\ell+m_0+\dots+m_\ell} \tr\big\{ q'_\kappa R_0(WR_0)^{m_0} q_\kappa R_0 \cdots& \\[-1.5em]
	\times q_\kappa R_0 (W R_0)^{m_\ell}& \big\} .
	\end{aligned}
	\label{eq:a priori H1 6}
	\end{align}
	The terms~\eqref{eq:a priori H1 4p1} and~\eqref{eq:a priori H1 4p2} vanish by cycling the trace:
	\begin{align*}
	\eqref{eq:a priori H1 4p1}
	&= - 64\kappa^7 \tr\{ [\partial, q_\kappa] R_0 q_\kappa R_0 \} 
	= 0 , \\
	\eqref{eq:a priori H1 4p2}
	&= 64\kappa^7 \tr\{ [\partial, q_\kappa] R_0 q_\kappa R_0 q_\kappa R_0 \} 
	= 0 .
	\end{align*}
	
	For the term~\eqref{eq:a priori H1 5}, we integrate by parts to replace $3W'q_\kappa^2$ by $-6Wq_\kappa q'_\kappa$.  We then use the operator identity~\eqref{eq:quad op id} and the estimates $\snorm{ R_{0}(2 \kappa) \partial^j }_\op \lesssim \kappa^{j-2}$ for $j=0,1,2$ (the estimate for $j = 0$ is also true as an operator on $L^\infty$ by the explicit kernel formula for $R_0$ and Young's inequality) to conclude
	\begin{equation*}
	|\eqref{eq:a priori H1 5}| \lesssim \norm{ q'_\kappa }_{L^2}^2 + 1 .
	\end{equation*}
	
	For the tail~\eqref{eq:a priori H1 6} we estimate
	\begin{align*}
	|\eqref{eq:a priori H1 6}|
	&\leq 64\kappa^7\sum_{\substack{ \ell\geq 1,\ m_0,\dots,m_\ell\geq 0 \\ \ell+m_0+\dots+m_\ell \geq 3 }} \big| \tr \big\{ q'_\kappa R_0(W R_0)^{m_0} q_\kappa R_0 \cdots q_\kappa R_0 (W R_0)^{m_\ell} \big\} \big| .
	\intertext{We put $q'_\kappa$ and one other $q_\kappa$ in $L^2$ via the estimate~\eqref{eq:hskappa identity 1} and put the remaining terms in operator norm.  We have $\norm{q}_{L^2} \lesssim 1$ uniformly for $|t|\leq T$ and $\kappa$ large by \cref{thm:a priori L2}, and so we obtain}
	&\lesssim \kappa^7\sum_{\substack{ \ell\geq 1,\ m_0,\dots,m_\ell\geq 0 \\ \ell+m_0+\dots+m_\ell \geq 3 }} \frac{ \norm{ q'_\kappa }_{L^2} }{ \kappa^{3} } \left( \frac{\norm{q_\kappa}_{L^\infty}}{ \kappa^2 } \right)^{\ell-1} \left( \frac{\norm{W}_{L^\infty}}{\kappa^2} \right)^{m_0+\dots+m_\ell} .
	\intertext{The condition $\ell+m_0+\dots+m_\ell \geq 3$ yields a gain $\lesssim (\kappa^{-2})^2$ when we sum over the two parenthetical terms, and so we obtain}
	&\lesssim \norm{ q'_\kappa }_{L^2} \lesssim \norm{ q'_\kappa }_{L^2}^2 + 1
	\end{align*}
	provided that $\kappa \gg \norm{ q_\kappa }^{1/2}_{L^\infty}$ and $\kappa \gg \norm{W}^{1/2}_{L^\infty}$.  From \cref{thm:a priori L2} we know that
	\begin{equation}
	\norm{q_\kappa}_{L^2} \lesssim 1 , \qquad
	\norm{q_\kappa}_{L^\infty}
	\leq \norm{q_\kappa}_{L^2}^{1/2} \norm{q'_\kappa}_{L^2}^{1/2}
	\lesssim \norm{q'_\kappa}_{L^2}^{1/2}
	\label{eq:a priori H1 7}
	\end{equation}
	for $\kappa\geq \kappa_0(T, \norm{q(0)}_{L^2} )$ sufficiently large, and so altogether we conclude
	\begin{equation*}
	|\eqref{eq:a priori H1 2}|
	\lesssim \norm{ q'_\kappa}_{L^2}^2 + 1
	\quad\tx{uniformly for }\kappa \geq \kappa_0( \norm{q_\kappa}_{H^1} ) .
	\end{equation*}

	It remains to estimate the term~\eqref{eq:a priori H1 3}.  Expanding $g(\kappa,q_\kappa+W) - g(\kappa, q_\kappa+W)$ and extracting the linear term, we write
	\begin{align}
	&|\eqref{eq:a priori H1 3}|
	\nonumber \\
	&\leq \left| 32\kappa^5 \int \big[ Wq'_\kappa + (Wq_\kappa)' \big] \langle \del_x , R_0q_\kappa R_0 \del_x\rangle \dx \right| 
	\label{eq:a priori H1 8}\\
	&\begin{aligned}
	\phantom{\leq{}}+ 32\kappa^5\sum_{\substack{ \ell\geq 1,\ m_0,\dots,m_\ell\geq 0 \\ \ell+m_0+\dots+m_\ell\geq 2}} \big| \tr \big\{ &\big[ Wq'_\kappa + (Wq_\kappa)' \big] \\[-1.5em]
	&\times R_0(W R_0)^{m_0} q_\kappa R_0 \cdots q_\kappa R_0 (W R_0)^{m_\ell} \big\} \big| .
	\end{aligned}
	\label{eq:a priori H1 9}
	\end{align}
	
	For the first term~\eqref{eq:a priori H1 8} we use the operator identity~\eqref{eq:linear op id} to write
	\begin{equation*}
	\eqref{eq:a priori H1 8} = 8\kappa^2 \int [ Wq'_\kappa + (Wq_\kappa)' ] q_\kappa \dx + \int [ Wq'_\kappa + (Wq_\kappa)' ] [ q''_\kappa + R_0(2\kappa) \partial^2 q''_\kappa ] \dx .
	\end{equation*}
	The first integral vanishes because the integrand is a total derivative.  For the second integral, we integrate by parts to obtain 
	\begin{align*}
	&\int [ Wq'_\kappa + (Wq_\kappa)' ] [ q''_\kappa + R_0(2\kappa) \partial^2 q''_\kappa ] \dx \\
	&= - \int [ 2 W' q'_\kappa + W''q_\kappa ] [ q'_\kappa + R_0(2\kappa) \partial^2 q'_\kappa ] \dx \\
	&\phantom{={}} + \int \big\{ Wq'_\kappa [ R_0(2\kappa) \partial^2 q''_\kappa ] - Wq''_\kappa [R_0(2\kappa) \partial^2 q''_\kappa] \big\} \dx .
	\end{align*}
	Those terms without $q''_\kappa$ can be estimated using Cauchy--Schwarz and the observation $\norm{ R_0(2\kappa) \partial^2 }_{\op} \lesssim 1$.  For the remaining terms, we ``integrate by parts'' in Fourier variables:
	\begin{align*}
	&\left| \int \big\{ Wq'_\kappa [ R_0(2\kappa) \partial^2 q''_\kappa ] - Wq''_\kappa [R_0(2\kappa) \partial^2 q''_\kappa] \big\} \dx \right| \\
	&= (2\pi)^{-\frac{1}{2}} \left| \iint \wh{W}(\xi-\eta) \wh{q'_\kappa}(\eta) \wh{q'_\kappa}(\xi) \frac{ i(\xi - \eta) \xi^2}{ \xi^2 + 4\kappa^2 } \dxi \deta \right| \\
	&\lesssim \iint \left| \wh{W'}(\xi-\eta) \wh{q'_\kappa}(\eta) \wh{q'_\kappa}(\xi) \right| \dxi \deta
	\lesssim \snorm{ \wh{W'} }_{L^1} \snorm{ q'_\kappa }_{L^2}^2 
	\lesssim \snorm{ W' }_{H^1} \snorm{ q'_\kappa }_{L^2}^2 .
	\end{align*}
	In the last inequality, we used Cauchy--Schwarz to estimate
	\begin{equation*}
	\int \big| \wh{ W' } (\xi) \big| \dxi
	\leq \left( \int \frac{\dd \xi}{\xi^2+1} \right)^{\tfrac{1}{2}} \left( \int (\xi^2+1) \big| \wh{ W' } (\xi) \big|^2 \dxi \right)^{\tfrac{1}{2}} .
	\end{equation*}
	Together, we conclude
	\begin{equation*}
	|\eqref{eq:a priori H1 8}| \lesssim \snorm{ q'_\kappa }_{L^2}^2 +1 .
	\end{equation*}
	
	For the tail~\eqref{eq:a priori H1 9} we put $Wq'_\kappa + (Wq_\kappa)'$ and one $q_\kappa$ in $L^2$ using the estimate~\eqref{eq:hskappa identity 1} and the observation $\norm{f}_{H^{-1}_\kappa} \lesssim \kappa^{-1} \norm{f}_{L^2}$, and we put all other terms in operator norm to obtain
	\begin{align*}
	|\eqref{eq:a priori H1 9}|
	&\lesssim \kappa^5\sum_{\substack{ \ell\geq 1,\ m_0,\dots,m_\ell\geq 0 \\ \ell+m_0+\dots+m_\ell\geq 2}} \frac{ \norm{q_\kappa}_{H^1} }{ \kappa^{3} } \left( \frac{\norm{q_\kappa}_{L^\infty}}{\kappa^{2}} \right)^{\ell-1} \left( \frac{\norm{W}_{L^\infty}}{\kappa^{2}} \right)^{m_0+\dots+m_\ell} \lesssim \norm{q_\kappa}_{H^1} 
	\end{align*}
	provided that $\kappa \gg \norm{ q_\kappa }^{1/2}_{L^\infty}$ and $\kappa \gg \norm{W}^{1/2}_{L^\infty}$.  Note the condition $\ell+m_0+\dots+m_\ell \geq 2$ yielded a gain $\lesssim \kappa^{-2}$ when we summed over the parenthetical terms.  Recalling our control~\eqref{eq:a priori H1 7} over the $L^\infty$ norm of $q_\kappa$, we conclude
	\begin{equation*}
	|\eqref{eq:a priori H1 9}|
	\lesssim \norm{ q'_\kappa}_{L^2}^2 + 1
	\quad\tx{uniformly for }\kappa \geq \kappa_0( \norm{q'_\kappa}_{L^2} ) .
	\end{equation*}

	Altogether we have obtained
	\begin{equation*}
	\left| \ddt E_1 \right|
	\lesssim \norm{q'_\kappa}_{L^2}^2 + 1
	\quad\tx{uniformly for }|t|\leq T\tx{ and }\kappa \geq \kappa_0( \norm{q'_\kappa}_{L^2} ) .
	\end{equation*}
	We use $E_1$ and the estimates~\eqref{eq:a priori H1 7} to bound $q'_\kappa$ in $L^2$:
	\begin{equation*}
	\norm{ q'_\kappa}_{L^2}^2
	\lesssim E_1 + \left| \int q_\kappa^3 \dx \right|
	\lesssim E_1 + \norm{ q'_\kappa }_{L^2}^{1/2} .
	\end{equation*}
	Together, we conclude that there exists a constant $C = C(T,A)$ such that
	\begin{equation*}
	\norm{ q'_\kappa(t) }_{L^2}^2 
	\leq C + C\norm{ q'_\kappa }_{L^2}^{1/2} + C\int_0^t \norm{q'_\kappa(s)}_{L^2_x}^2 \ds .
	\end{equation*}
	For $\norm{ q'_\kappa(t) }_{L^2}^2 \gtrsim C^{4/3}$ we can apply Gr\"onwall's inequality to obtain $\norm{ q'_\kappa(t) }_{L^2}^2 \lesssim 1$ for $|t|\leq T$, and so we conclude
	\begin{equation*}
	\norm{ q'_\kappa(t) }_{L^2}^2 \leq C(T, \norm{q(0)}_{H^1})
	\quad\tx{uniformly for }|t|\leq T\tx{ and }\kappa \geq \kappa_0( T, \norm{q(0)}_{H^1} ) . \qedhere
	\end{equation*}
\end{proof}

The last space for which we need to rely upon the corresponding polynomial conserved quantity to obtain an \ti{a priori} estimate is $H^2$.  Starting with $H^3$, the energy arguments are much simplified and the \ti{a priori} estimates are proven inductively.
\begin{prop}
	\label{thm:a priori H2}
	Given $A,T>0$ there exist constants $C$ and $\kappa_0$ such that solutions $q_\kappa(t)$ to the tidal $H_\kappa$ flow~\eqref{eq:thk} obey
	\begin{equation*}
	\norm{q(0)}_{H^2} \leq A
	\quad\implies\quad
	\norm{q_\kappa(t)}_{H^2} \leq C\quad\tx{for all }|t|\leq T\tx{ and }\kappa \geq \kappa_0 .
	\end{equation*}
\end{prop}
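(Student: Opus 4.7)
The plan is to mimic the proofs of \cref{thm:a priori L2,thm:a priori H1}, replacing the conserved quantities $E_0$ and $E_1$ by the next polynomial conserved quantity of the KdV hierarchy,
\begin{equation*}
E_2(t) := \int \big\{ \tfrac{1}{2}\big(q''_\kappa(t,x)\big)^2 - \tfrac{5}{2} q_\kappa(t,x)\big(q'_\kappa(t,x)\big)^2 + \tfrac{5}{2} q_\kappa(t,x)^4 \big\} \dx
\end{equation*}
(up to sign conventions), which in the case $W\equiv 0$ is preserved by the $H_\kappa$ flow. By approximation and local well-posedness via \cref{thm:thk lwp}, we may assume $q(0)\in H^\infty$. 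Multiplying the tidal $H_\kappa$ flow \eqref{eq:thk} by $\delta E_2/\delta q_\kappa$ (which is a polynomial in $q_\kappa,q'_\kappa,q''_\kappa,q^{(4)}_\kappa$) and integrating in $x$ yields an expression for $\ddt E_2$.

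The contributions in which $W$ does not appear must cancel, either directly via trace cyclicity (as in \eqref{eq:a priori L2 3}--\eqref{eq:a priori L2 4}) or via the algebraic identities \eqref{eq:g alg prop 1}--\eqref{eq:g alg prop 3}; these identities encode the Poisson-commutativity of $E_2$ with the $H_\kappa$ Hamiltonian. The $W$-dependent terms are then organized in analogy with \eqref{eq:a priori H1 1}--\eqref{eq:a priori H1 3}, namely those from $g(\kappa,W)$ and the constant advection applied to $W$, the quadratic cross-terms between $W$ and $q_\kappa$, and the resolvent tail. Each piece is dispatched using the operator identities \eqref{eq:linear op id}--\eqref{eq:quad op id}, the Hilbert--Schmidt estimate \eqref{eq:hskappa identity 1}, trace cyclicity to ``integrate by parts'', and the $\kappa^{-2}$-gain from the condition $\ell+m_0+\cdots+m_\ell\geq 3$ as exploited in \eqref{eq:double sum inner}. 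The novelty relative to the $H^1$ argument is that two derivatives must now be distributed across the trace, so the combinatorics of the sums are heavier, but the structure of the estimates is unchanged.

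The main obstacle will be the cubic and quartic portions of the energy, which produce nonlinear integrals like $\int q^{(4)}_\kappa(Wq_\kappa)\dx$ and $\int q_\kappa^3 g'(\kappa,q_\kappa+W)\dx$ that must be absorbed into $\snorm{q''_\kappa}_{L^2}^2+1$. The essential tool is Sobolev interpolation together with the $H^1$ bound of \cref{thm:a priori H1}: we have $\snorm{q_\kappa}_{L^\infty}\lesssim 1$ and $\snorm{q'_\kappa}_{L^\infty}\lesssim \snorm{q'_\kappa}_{L^2}^{1/2}\snorm{q''_\kappa}_{L^2}^{1/2}\lesssim \snorm{q''_\kappa}_{L^2}^{1/2}$, so that any term with at most two total derivatives falling on $q_\kappa$-factors can be closed at the cost of a factor of $\snorm{q''_\kappa}_{L^2}$. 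Collecting all estimates yields
\begin{equation*}
\left| \ddt E_2 \right| \lesssim \snorm{q''_\kappa}_{L^2}^2 + 1
\qquad\tx{uniformly for }|t|\leq T\tx{ and }\kappa\geq \kappa_0(\snorm{q_\kappa(t)}_{H^2}) .
\end{equation*}
Since $E_2$ controls $\snorm{q''_\kappa}_{L^2}^2$ modulo quantities bounded by the $H^1$ norm already controlled by \cref{thm:a priori H1} (again through Sobolev interpolation on the cubic and quartic terms), Gr\"onwall's inequality then closes the argument and yields the desired uniform $H^2$ bound.
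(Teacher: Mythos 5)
Your proposal follows essentially the same route as the paper: the paper also defines the third conserved energy $E_2$ (with density $\tfrac12 (q''_\kappa)^2 + 5q_\kappa(q'_\kappa)^2 + \tfrac52 q_\kappa^4$, so do check your coefficient on the cubic term, since the exact form is what makes the $W\equiv 0$ contribution vanish via the algebraic identities \eqref{eq:g alg prop 1}--\eqref{eq:g alg prop 3}), organizes $\ddt E_2$ into linear, quadratic, and tail pieces in $q_\kappa$ and $W$ exactly as you describe, and closes with the interpolation bounds $\snorm{q_\kappa}_{L^\infty}\lesssim 1$, $\snorm{q'_\kappa}_{L^\infty}\lesssim\snorm{q''_\kappa}_{L^2}^{1/2}$ from \cref{thm:a priori H1} plus Gr\"onwall. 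The only cosmetic difference is that the paper arranges matters so that $\kappa_0$ depends only on $T$ and $\norm{q(0)}_{H^1}$ rather than on $\snorm{q_\kappa(t)}_{H^2}$, which spares the continuity bootstrap your version would need.
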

\begin{proof}
	By approximation and local well-posedness we may assume that $q(0) \in H^\infty$.  Let
	\begin{equation*}
	E_2(t) := \int \big\{ \tfrac{1}{2} (q''_\kappa(t,x))^2 + 5 q_\kappa(t,x) (q'_\kappa(t,x))^2 + \tfrac{5}{2} q_\kappa(t,x)^4 \big\} \dx
	\end{equation*}
	denote the third energy in the KdV hierarchy of conserved quantities.
	
	We multiply the tidal $H_\kappa$ flow~\eqref{eq:thk} by $q^{(4)}_\kappa - 5(q'_\kappa)^2 - 10 q_\kappa q''_\kappa + 10q_\kappa^3$ and integrate in space to obtain an expression for $\ddt E_2$.  We then integrate by parts to remove the derivative from $g(\kappa,q_\kappa+W) -g(\kappa,W)$, expand both diagonal Green's functions using the relation~\eqref{eq:g alg prop 1}, and apply the identity~\eqref{eq:g alg prop 2} to obtain
	\begin{align*}
	\ddt E_2
	&= \int \big[ q^{(4)}_\kappa - 5(q'_\kappa)^2 - 10 q_\kappa q''_\kappa + 10q_\kappa^3 \big] \big[ 16\kappa^5 g'(\kappa,W) + 4\kappa^2 W' \big] \dx \\
	&\phantom{={}}+ 32\kappa^5 \int \big[ - q'_\kappa q''_\kappa - 2q_\kappa q'''_\kappa + 15 q_\kappa^2 q'_\kappa \big] g(\kappa,W) \dx \\
	&\begin{aligned} \phantom{={}}+ 2\kappa^5 \int \big[ 2\kappa^2( - q'''_\kappa + 6q_\kappa q'_\kappa ) - 2Wq'''_\kappa - W' q''_\kappa + 12Wq_\kappa q'_\kappa + 3W'q_\kappa^2 \big]& \\[-0.5em]
	\times \big[ g(\kappa,q_\kappa+W) - g(\kappa,W) \big]& \dx .
	\end{aligned}
	\end{align*}
	Note that in the case $W\equiv 0$, all three integrals vanish and $E_2$ is conserved as expected.  
	
	In order to exhibit cancellation in the limit $\kappa\to\infty$, we expand $g(\kappa,q_\kappa+W) -g(\kappa,W)$ in powers of $q_\kappa$ and $W$ and regroup terms:
	\begin{align}
	&\ddt E_2
	\nonumber \\
	&= \int \big[ q^{(4)}_\kappa - 5(q'_\kappa)^2 - 10 q_\kappa q''_\kappa \big] \big[ 16\kappa^5 g'(\kappa,W) + 4\kappa^2 W' \big]  
	\label{eq:a priori H2 1}\\ 
	&- 32\kappa^5 \int \big[ q'_\kappa q''_\kappa + 2q_\kappa q'''_\kappa \big] \big[ g(\kappa,W) + \langle \del_x, R_0WR_0\del_x \rangle \big]  
	\label{eq:a priori H2 2}\\
	&+ 64\kappa^7 \int ( - q'''_\kappa + 6q_\kappa q'_\kappa ) \big[ g(\kappa,q_\kappa) - \tfrac{1}{2\kappa} \big]  
	\label{eq:a priori H2 3}\\ 
	&\begin{aligned}
	{}+ 32&\kappa^5 \int \big\{ {-2}\kappa^2 q'''_\kappa \big[ \langle \del_x, R_0W R_0 q_\kappa R_0\del_x \rangle + \langle \del_x, R_0q_\kappa R_0 W R_0 \del_x \rangle \big] \\
	&+ [ 2Wq'''_\kappa + W' q''_\kappa ] \langle \del_x, R_0q_\kappa R_0\del_x \rangle
	+ [ q'_\kappa q''_\kappa + 2q_\kappa q'''_\kappa ] \langle \del_x, R_0WR_0\del_x \rangle \big\} 
	\end{aligned} 
	\label{eq:a priori H2 4}\\ 
	&\begin{aligned}
	{}+ 8 \kappa^2 \int &\big\{ 5 W' q_\kappa^3 - 12\kappa^3 \big[ 4 Wq_\kappa q'_\kappa + W'q_\kappa^2 \big] \langle \del_x, R_0q_\kappa R_0\del_x \rangle \\
	&+ 48\kappa^5 q_\kappa q'_\kappa \big[ \langle \del_x, R_0W R_0 q_\kappa R_0\del_x \rangle + \langle \del_x, R_0q_\kappa R_0 W R_0 \del_x \rangle \big] \big\}  
	\end{aligned} 
	\label{eq:a priori H2 5}\\[0.5em] 
	&\begin{aligned}
	{}+ 64\kappa^7 \sum_{\substack{ \ell \geq 1,\, m_0+ \dots + m_\ell \geq 1 \\ \ell + m_0 + \dots + m_\ell \geq 3 }}  (-1)^{\ell+m_0+\dots+m_\ell} \tr\{ ( - q'''_\kappa + 6q_\kappa q'_\kappa ) R_0 (WR_0)^{m_0} &\\[-1.5em]
	\times q_\kappa R_0 \cdots q_\kappa R_0 (WR_0)^{m_\ell} \}&
	\end{aligned}
	\label{eq:a priori H2 6}\\[0.5em]
	&\begin{aligned}
	{}+ 16\kappa^5 \sum_{\substack{ \ell \geq 1,\, m_0+\dots+m_\ell \geq 0 \\ \ell + m_0 + \dots + m_\ell \geq 2 }}  (-1)^{\ell+m_0+\dots+m_\ell} \tr\{ [ {-4} Wq'''_\kappa - 2W' q''_\kappa + 24 Wq_\kappa q'_\kappa & \\[-1.5em]
	{}+ 6W'q_\kappa^2 ] R_0 (WR_0)^{m_0} q_\kappa R_0 \cdots q_\kappa R_0 (WR_0)^{m_\ell} \} .&
	\end{aligned}
	\label{eq:a priori H2 7}
	\end{align}
	Note that in~\eqref{eq:a priori H2 3} we extracted the terms from~\eqref{eq:a priori H2 6} with no factors of $W$, which is reflected in the condition $m_0+ \dots + m_\ell \geq 1$.  We will estimate each of the terms \eqref{eq:a priori H2 1}--\eqref{eq:a priori H2 7} separately.
	
	For the term~\eqref{eq:a priori H2 1} we expand $g(\kappa,W)$ in powers of $W$:
	\begin{align*}
	|\eqref{eq:a priori H2 1}|
	&\leq \left| \int \big[ q^{(4)}_\kappa - 5(q'_\kappa)^2 - 10 q_\kappa q''_\kappa \big] \big[ - \langle \del_x, R_0W'R_0 \del_x \rangle +  4\kappa^2 W' \big] \right| \\
	&\phantom{\leq{}}+ 16\kappa^5 \sum_{m\geq 2} \big| \tr\big\{ \big[ q^{(4)}_\kappa - 5(q'_\kappa)^2 - 10 q_\kappa q''_\kappa \big] [\partial, R_0 (WR_0)^m ]\big\} \big| .
	\end{align*}
	For the integral, we use the operator identity~\eqref{eq:linear op id} to write
	\begin{align*}
	&\int \big[ q^{(4)}_\kappa - 5(q'_\kappa)^2 - 10 q_\kappa q''_\kappa \big] \big[ - \langle \del_x, R_0W'R_0 \del_x \rangle +  4\kappa^2 W' \big] \\
	&= \int \big[ q^{(4)}_\kappa - 5(q'_\kappa)^2 - 10 q_\kappa q''_\kappa \big] \big[ -W''' - R_0(2\kappa) W^{(5)} \big] .
	\end{align*}
	For the term $q^{(4)}_\kappa$ we integrate by parts twice.  As $W'$ is Schwartz and $\norm{q_\kappa}_{H^1} \lesssim 1$ by \cref{thm:a priori H1}, then Cauchy--Schwarz yields
	\begin{equation*}
	\left| \int \big[ q^{(4)}_\kappa - 5(q'_\kappa)^2 - 10 q_\kappa q''_\kappa \big] \big[ -W''' - R_0(2\kappa) W^{(5)} \big] \right|
	\lesssim \norm{q''_\kappa}_{L^2} + 1
	\lesssim \norm{q''_\kappa}_{L^2}^2 + 1 .
	\end{equation*}
	For the tail, we again integrate by parts twice for $q^{(4)}_\kappa$.  We then estimate the $q_\kappa$ terms and the highest order $W$ term in $L^2$ using the estimate~\eqref{eq:hskappa identity 1} and $\norm{f}_{H^{-1}_{\kappa}}\lesssim \kappa^{-1} \norm{f}_{L^2}$ and the remaining terms in $L^\infty$.  This yields
	\begin{align*}
	&16\kappa^5 \sum_{m\geq 2} \big| \tr\big\{ \big[ q^{(4)}_\kappa - 5(q'_\kappa)^2 - 10 q_\kappa q''_\kappa \big] [\partial, R_0 (WR_0)^m ]\big\} \big| \\
	&\lesssim \kappa^5 \sum_{m\geq 2} \frac{ \norm{q''_\kappa}_{L^2} \norm{W'}_{H^2} }{ \kappa^3 } \left( \frac{ \norm{W}_{W^{1,\infty}} }{ \kappa^2 } \right)^{m-1}
	\lesssim \norm{q''_\kappa}_{L^2} 
	\lesssim \norm{q''_\kappa}_{L^2}^2 + 1
	\end{align*}
	provided that $\kappa \gg \norm{W}_{W^{1,\infty}}^{1/2}$.  
	
	For the term~\eqref{eq:a priori H2 2} we integrate by parts once to write
	\begin{equation*}
	|\eqref{eq:a priori H2 2}|
	\leq 32\kappa^5 \sum_{m\geq 2} \big| \tr \big\{ \big[ \tfrac{1}{2} (q'_\kappa)^2 - 2q_\kappa q''_\kappa \big] [\partial, R_0(WR_0)^m ] \big\} \big| .
	\end{equation*}
	We estimate the $q_\kappa$ terms and the one factor of $W'$ in $L^2$ using the estimate~\eqref{eq:hskappa identity 1} and the observation $\norm{f}_{H^{-1}_{\kappa}}\lesssim \kappa^{-1} \norm{f}_{L^2}$, we estimate the remaining terms in operator norm.  By \cref{thm:a priori H1} we have
	\begin{equation*}
	\norm{q'_\kappa}_{L^\infty}
	\leq \norm{q'_\kappa}_{L^2}^{1/2} \norm{q''_\kappa}_{L^2}^{1/2}
	\lesssim \norm{q''_\kappa}_{L^2}^{1/2} 
	\lesssim \norm{q''_\kappa}_{L^2} + 1.
	\end{equation*}
	Together, we obtain
	\begin{equation*}
	|\eqref{eq:a priori H2 2}|
	\lesssim \kappa^5 \sum_{m\geq 2} \frac{ (\norm{q''_\kappa}_{L^2} +1)\norm{W'}_{L^2} }{ \kappa^3 } \left( \frac{ \norm{W}_{L^{\infty}} }{ \kappa^2 } \right)^{m-1}
	\lesssim \norm{q''_\kappa}_{L^2} 
	\lesssim \norm{q''_\kappa}_{L^2}^2 + 1
	\end{equation*}
	provided that $\kappa \gg \norm{W}_{L^{\infty}}^{1/2}$.  
	
	The term~\eqref{eq:a priori H2 3} vanishes.  Indeed, after integrating by parts and adding a total derivative we have
	\begin{align*}
	\eqref{eq:a priori H2 3}
	&= - 64\kappa^7 \int ( - q''_\kappa + 3q_\kappa^2  ) g'(\kappa,q_\kappa) \dx \\
	&= - 4\kappa^2 \int ( - q''_\kappa + 3q_\kappa^2  ) \big[ 16\kappa^5 g'(\kappa,q_\kappa) + 4\kappa^2 q'_\kappa \big] \dx .
	\end{align*}
	The integral on the RHS is $\ddt E_1$ in the case $W\equiv 0$, which we observed to vanish in \cref{thm:a priori H1}.
	
	For the term~\eqref{eq:a priori H2 4}, we integrate by parts to write
	\begin{align*}
	\eqref{eq:a priori H2 4} = 32\kappa^5 \int \big\{ &2\kappa^2 q''_\kappa \big[ \langle \del_x, R_0W R_0 q'_\kappa R_0\del_x \rangle + \langle \del_x, R_0q'_\kappa R_0 W R_0 \del_x \rangle \big]\\[-0.5em]
	&- 2W q''_\kappa \langle \del_x, R_0q'_\kappa R_0\del_x \rangle - q'_\kappa q''_\kappa \langle \del_x, R_0WR_0\del_x \rangle \\
	&+ 2\kappa^2 q''_\kappa \big[ \langle \del_x, R_0W' R_0 q_\kappa R_0\del_x \rangle + \langle \del_x, R_0q_\kappa R_0 W' R_0 \del_x \rangle \big]\\
	&- W'q''_\kappa \langle \del_x, R_0q_\kappa R_0\del_x \rangle - 2q_\kappa q''_\kappa \langle \del_x, R_0W'R_0\del_x \rangle  \big\} .
	\end{align*}
	We use the operator identities~\eqref{eq:linear op id} and~\eqref{eq:quad op id}.  Observe that the leading order contributions as $\kappa\to\infty$ (i.e. $4\kappa^2f$ in~\eqref{eq:linear op id} and $3fh$ in~\eqref{eq:quad op id}) cancel out.  The remainder is easily estimated, yielding
	\begin{equation*}
	|\eqref{eq:a priori H2 4}| \lesssim \norm{q''_\kappa}_{L^2}^2 + 1 .
	\end{equation*}
	
	For the term~\eqref{eq:a priori H2 5} we write
	\begin{align*}
	\eqref{eq:a priori H2 5} &= 8 \kappa^2 \int \big\{ + 48\kappa^5 q_\kappa q'_\kappa \big[ \langle \del_x, R_0W R_0 q_\kappa R_0\del_x \rangle + \langle \del_x, R_0q_\kappa R_0 W R_0 \del_x \rangle \big] \\
	&\phantom{={}}- 15 W q_\kappa^2 q'_\kappa - 24\kappa^3 Wq_\kappa q'_\kappa \langle \del_x, R_0q_\kappa R_0\del_x \rangle + 12\kappa^3 Wq_\kappa^2 \langle \del_x, R_0q'_\kappa R_0\del_x \rangle \big\} .
	\end{align*}
	We use the operator identities~\eqref{eq:linear op id} and~\eqref{eq:quad op id}.  Observe that the leading order contributions as $\kappa\to\infty$ (i.e. $4\kappa^2f$ in~\eqref{eq:linear op id} and $3fh$ in~\eqref{eq:quad op id})) cancel out.  The remainder is easily estimated, yielding
	\begin{equation*}
	|\eqref{eq:a priori H2 5}| \lesssim \norm{q''_\kappa}_{L^2}^2 + 1 .
	\end{equation*}
	
	For the tail~\eqref{eq:a priori H2 6}, we integrate by parts once to obtain
	\begin{align*}
	&|\eqref{eq:a priori H2 6}| \\
	&\lesssim \kappa^7 \sum_{\substack{ \ell \geq 1,\ m_0+ \dots + m_\ell \geq 1 \\ \ell + m_0 + \dots + m_\ell \geq 3 }} \big| \tr\big\{ ( - q''_\kappa + 3q_\kappa^2 ) [ \partial, R_0 (WR_0)^{m_0} q_\kappa R_0 \cdots q_\kappa R_0 (WR_0)^{m_\ell} ] \big\} \big| . \\
	\intertext{We put $- q''_\kappa + 3q_\kappa^2$ and the highest order $q_\kappa$ in $L^2$ using the identity~\eqref{eq:hskappa identity 1} and the observation $\norm{f}_{H^{-1}_\kappa} \lesssim \kappa^{-1} \norm{f}_{L^2}$, and we estimate the remaining terms in operator norm:}
	&\lesssim \kappa^7 \sum_{\substack{ \ell \geq 1,\ m_0+ \dots + m_\ell \geq 1 \\ \ell + m_0 + \dots + m_\ell \geq 3 }} \frac{ \norm{q''_\kappa}_{L^2} +1 }{\kappa^3} \left( \frac{\norm{q_\kappa}_{H^1}}{\kappa^{2}} \right)^{\ell-1} \left( \frac{\norm{W}_{W^{1,\infty}}}{\kappa^2} \right)^{m_0+\dots+m_\ell} .
	\end{align*}
	We re-index $m = m_0+\dots+m_\ell$ and sum over $\ell + m$ as in~\eqref{eq:double sum inner}.  The condition $\ell + m_0 + \dots + m_\ell \geq 3$ guarantees a gain $\lesssim (\kappa^{-2})^2$ when we sum over the two parenthetical terms, and so we obtain an acceptable bound.
	
	For the tail~\eqref{eq:a priori H2 7}, we estimate
	\begin{align*}
	|\eqref{eq:a priori H2 7}| \lesssim
	\kappa^5 \sum_{\substack{ \ell \geq 1,\ m_0,\dots,m_\ell \geq 0 \\ \ell + m_0 + \dots + m_\ell \geq 2 }} \big| \tr\big\{ \big[ {-4} Wq'''_\kappa - 2W' q''_\kappa + 24 Wq_\kappa q'_\kappa + 6W'q_\kappa^2 \big]& \\[-1.5em]
	\times R_0 (WR_0)^{m_0} q_\kappa R_0 \cdots q_\kappa R_0 (WR_0)^{m_\ell} &\big\} \big| .
	\end{align*}
	For the term $q'''_\kappa$ we integrate by parts once.  We then put the square-bracketed term and the highest order factor of $q_\kappa$ in $L^2$ using the identity~\eqref{eq:hskappa identity 1} and the observation $\norm{f}_{H^{-1}_\kappa} \lesssim \kappa^{-1} \norm{f}_{L^2}$, and we estimate the remaining terms in operator norm:
	\begin{equation*}
	|\eqref{eq:a priori H2 7}|\lesssim \kappa^5 \sum_{\substack{ \ell \geq 1,\ m_0,\dots,m_\ell \geq 0 \\ \ell + m_0 + \dots + m_\ell \geq 2 }} \frac{ \norm{q''_\kappa}_{L^2} +1 }{\kappa^{3/2}} \left( \frac{\norm{q_\kappa}_{H^1}}{\kappa^{3/2}} \right)^{\ell} \left( \frac{\norm{W}_{W^{1,\infty}}}{\kappa^2} \right)^{m_0+\dots+m_\ell} .
	\end{equation*}
	We re-index $m = m_0+\dots+m_\ell$ and sum over $\ell + m$ as in~\eqref{eq:double sum inner}.  The condition $\ell + m_0 + \dots + m_\ell \geq 2$ guarantees a gain $\lesssim \kappa^{-3/2} \cdot \kappa^{-2}$ when we sum over the two parenthetical terms, and so we conclude
	\begin{equation*}
	|\eqref{eq:a priori H2 6}| 
	\lesssim \norm{q''_\kappa}_{L^2} + 1
	\lesssim \norm{q''_\kappa}_{L^2}^2 + 1
	\end{equation*}
	provided that $\kappa$ is sufficiently large (independently of $\norm{q''_\kappa}_{L^2}$).
	
	Altogether, we have obtained
	\begin{equation*}
	\left| \ddt E_2 \right| \lesssim \norm{q''_\kappa}_{L^2}^2 + 1 
	\quad\tx{uniformly for }|t|\leq T\tx{ and }\kappa\geq \kappa_0 ,
	\end{equation*}
	where $\kappa_0$ depends only on $T$ and $\norm{q(0)}_{H^1}$.  Using \cref{thm:a priori H1}, we can then bound
	\begin{equation*}
	\norm{q''_\kappa}_{L^2}^2
	\lesssim E_2 + \left| \int q_\kappa (q'_\kappa)^2 \dx \right| + \left| \int q_\kappa^4 \dx \right|
	\lesssim E_2 + 1.
	\end{equation*}
	Together, we conclude that there exists a constant $C = C(T,A)$ such that
	\begin{equation*}
	\norm{q''_\kappa(t)}_{L^2}^2
	\leq C + C \int_0^t \norm{q''_\kappa(s)}_{L^2_x}^2 \ds
	\end{equation*}
	uniformly for $|t|\leq T$ and $\kappa\geq \kappa_0$.  Gr\"onwall's inequality then yields
	\begin{equation*}
	\norm{ q''_\kappa(t) }_{L^2}^2 \leq C(T, \norm{q(0)}_{H^2})
	\quad\tx{uniformly for }|t|\leq T\tx{ and }\kappa \geq \kappa_0( T, \norm{q(0)}_{H^1} ) , \qedhere
	\end{equation*}
	as desired.
\end{proof}

For $H^s$, $s\geq 3$ we proceed by induction:
\begin{prop}
	\label{thm:a priori Hs}
	Given an integer $s\geq 3$ and $A,T>0$ there exist constants $C$ and $\kappa_0$ such that solutions $q_\kappa(t)$ to the tidal $H_\kappa$ flow~\eqref{eq:thk} obey
	\begin{equation*}
		\norm{q(0)}_{H^s} \leq A
		\quad\implies\quad
		\norm{q_\kappa(t)}_{H^s} \leq C\quad\tx{for all }|t|\leq T\tx{ and }\kappa \geq \kappa_0 .
	\end{equation*}
\end{prop}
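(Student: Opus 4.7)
The plan is to proceed by induction on $s\geq 3$, with the base case $s=3$ using \cref{thm:a priori H2} as the bound at the preceding level and the inductive step using this proposition itself at level $s-1$. As in the earlier propositions, by approximation and local well-posedness (\cref{thm:thk lwp}) it suffices to work with $q(0)\in H^\infty$. The natural energy here is the non-conserved quadratic
\begin{equation*}
	E_s(t) := \tfrac{1}{2}\int \big( q_\kappa^{(s)}(t,x) \big)^2 \dx ,
\end{equation*}
and the goal is to establish the differential inequality $|\ddt E_s| \lesssim E_s + 1$ uniformly for $|t|\leq T$ and $\kappa\geq \kappa_0(T,\norm{q(0)}_{H^s})$, after which Gr\"onwall gives the desired uniform bound.

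To compute $\ddt E_s$ I would apply $\partial^s$ to the tidal $H_\kappa$ flow~\eqref{eq:thk} and pair with $q_\kappa^{(s)}$. Using the translation identity~\eqref{eq:g trans prop 2} to expand the diagonal Green's function in powers of $q_\kappa$ and $W$, one regroups the resulting expression into five pieces, just as in \cref{thm:a priori L2,thm:a priori H1,thm:a priori H2}: linear in $q_\kappa$, linear in $W$, quadratic in $q_\kappa$, mixed quadratic $Wq$, and the series tail constrained by $\ell + m_0 + \cdots + m_\ell \geq 3$. The linear-in-$q_\kappa$ contribution vanishes by~\eqref{eq:linear op id} (odd integrand in Fourier), the purely-$W$ contribution is harmless since $W'$ is Schwartz, and the mixed quadratic and tail contributions are bounded by placing the highest-derivative factor of $q_\kappa$ (typically $q_\kappa^{(s)}$) in $L^2$ via~\eqref{eq:hskappa identity 1} and the remaining factors in operator norm; the gain supplied by the constraint $\ell + m_0 + \cdots + m_\ell \geq 3$ together with~\eqref{eq:double sum inner} absorbs the $\kappa^5$ prefactor with margin, and the factors of $q_\kappa^{(k)}$ with $k\leq s-1$ that appear are harmless thanks to the inductive bound.

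The main obstacle is the quadratic-in-$q_\kappa$ piece arising from the $-\langle\del_x, R_0 q_\kappa R_0 q_\kappa R_0\del_x\rangle'$ term in the expansion of $g'(\kappa, q_\kappa + W)$. Using the operator identity~\eqref{eq:quad op id}, its leading contribution as $\kappa\to\infty$ reproduces the KdV nonlinearity, reducing the task (modulo subleading corrections in~\eqref{eq:quad op id} handled as in \cref{thm:a priori H2}) to bounding $\int q_\kappa^{(s)}\, \partial^{s+1}(3 q_\kappa^2)\dx$; this is the Bona--Smith computation. Expanding via Leibniz and repeatedly integrating by parts, each summand with $s$ or $s+1$ consecutive derivatives on a single factor either collapses to a total derivative or combines into $\int q'_\kappa\, \big(q_\kappa^{(s)}\big)^2 \dx$, the latter bounded by $\norm{q'_\kappa}_{L^\infty} E_s$. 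The restriction $s\geq 3$ is exactly what makes this effective: it permits the Sobolev embedding $H^{s-1}(\R) \hookrightarrow W^{1,\infty}(\R)$, and the inductive hypothesis then renders $\norm{q'_\kappa}_{L^\infty}$ uniformly bounded for $|t|\leq T$. The residual Leibniz terms (for $s=3$, the crucial $\int q_\kappa^{(3)}(q_\kappa'')^2\dx$, which is itself a total derivative; for $s\geq 4$, handled by Gagliardo--Nirenberg interpolation) are all dominated by some power of $\norm{q_\kappa}_{H^{s-1}}$ times $E_s+1$. Combining the pieces yields $|\ddt E_s| \lesssim E_s + 1$, and Gr\"onwall closes the induction.
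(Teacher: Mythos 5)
Your overall architecture matches the paper's: induction on $s$ with base case furnished by \cref{thm:a priori H2}, the energy $E_s = \tfrac12\int (q_\kappa^{(s)})^2$, the expansion of $g(\kappa,q_\kappa+W)$ into linear, quadratic, and tail pieces, and Gr\"onwall at the end. The treatments of the linear pieces, the $Wq_\kappa$ and $W^2$ pieces, and the series tail are all essentially the paper's (the tail estimate you sketch is exactly the proof of the lemma~\eqref{eq:tail conv}, which the paper simply invokes). The divergence, and the gap, is in the quadratic-in-$q_\kappa$ term.

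You propose to apply the symbol identity~\eqref{eq:quad op id} first, extract the leading term $3q_\kappa^2$, run the Bona--Smith integration by parts on $\int q_\kappa^{(s)}\partial^{s+1}(3q_\kappa^2)$, and dismiss the subleading corrections as ``handled as in \cref{thm:a priori H2}.'' That last claim does not go through. After Leibniz, the worst correction terms are those in which all $s+1$ derivatives land on a single factor inside $R_0(2\kappa)$, e.g.
\begin{equation*}
	\int q_\kappa^{(s)}\,\big[R_0(2\kappa)q''_\kappa\big]\big[R_0(2\kappa)\partial^3 q_\kappa^{(s)}\big]\dx
	\qquad\tx{and}\qquad
	4\kappa^2\int q_\kappa^{(s)}\,\big[R_0(2\kappa)q_\kappa\big]\big[R_0(2\kappa)\partial^3 q_\kappa^{(s)}\big]\dx ,
\end{equation*}
and since $\snorm{R_0(2\kappa)\partial^3}_\op\sim\kappa$, the naive bound is $\kappa\, E_s$, which is fatal. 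In \cref{thm:a priori H2} no such term arises because the leading contributions there cancel between separately prepared pieces and every factor carries at most two derivatives. To rescue these terms within your framework you would need the antisymmetry of the odd multiplier $\eta^3/(\eta^2+4\kappa^2)$ --- i.e. the ``integrate by parts in Fourier variables'' maneuver the paper uses for~\eqref{eq:a priori H1 8}, which converts the expression into a commutator $[R_0(2\kappa)\partial^3,\, R_0(2\kappa)q''_\kappa]$ acting on $q_\kappa^{(s)}$ and trades the factor of $\kappa$ for one derivative on the low-frequency factor. The paper avoids the issue entirely by never invoking~\eqref{eq:quad op id} here: it distributes $\partial^{s+1}$ at the operator level, cycles the trace to convert the two $q_\kappa^{(s+1)}$ terms into $-16\kappa^5\tr\{q_\kappa^{(s)}R_0 q_\kappa^{(s)}R_0[\partial,q_\kappa]R_0\}$ (the operator analogue of your $\int q'_\kappa (q_\kappa^{(s)})^2$), and then estimates every remaining trace directly with~\eqref{eq:hskappa identity 1}, putting the two highest-order factors in $L^2$ and the third (of order $\leq s-2$, controlled by the inductive hypothesis) in operator norm. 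Your identification of the role of $s\geq 3$ and the vanishing of $\int q_\kappa^{(3)}(q''_\kappa)^2$ for $s=3$ are both correct, but as written the correction terms are not controlled.
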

\begin{proof}
	We induct on $s$, with the base case given by \cref{thm:a priori H2}.  Assume the result holds for $s-1$.  
	
	By approximation and local well-posedness we may assume that $q(0) \in H^\infty$.  We define
	\begin{equation*}
	E_s(t) := \tfrac{1}{2} \int (q^{(s)}_\kappa(t,x))^2 \dx .
	\end{equation*}
	Expanding $g(\kappa,q_\kappa+W)$ in powers of $q_\kappa$ and $W$, we write
	\begin{align}
	&\ddt E_s
	\nonumber \\
	&= \int q_\kappa^{(s)} \big\{ {-16}\kappa^5 \langle\del_x, R_0q^{(s+1)}_\kappa R_0 \del_x \rangle + 4\kappa^2 q^{(s+1)}_\kappa \big\} \dx
	\label{eq:a priori Hs 1} \\
	&\phantom{={}}+ \int q_\kappa^{(s)} \big\{ {-16}\kappa^5 \langle\del_x, R_0W^{(s+1)}R_0 \del_x \rangle + 4\kappa^2 W^{(s+1)} \big\} \dx 
	\label{eq:a priori Hs 2} \\
	&\phantom{={}}+ 16\kappa^5 \int q_\kappa^{(s)} \langle\del_x, [\partial^{s+1}, R_0q_\kappa R_0q_\kappa R_0] \del_x \rangle \dx
	\label{eq:a priori Hs 3} \\
	&\begin{aligned}
	\phantom{={}}+16\kappa^5 \int q_\kappa^{(s)} \big\{ &\langle\del_x, [\partial^{s+1}, R_0WR_0q_\kappa R_0] \del_x \rangle \\[-0.5em]
	&+ \langle\del_x, [\partial^{s+1}, R_0q_\kappa R_0WR_0] \del_x \rangle \big\} \dx
	\end{aligned}
	\label{eq:a priori Hs 4} \\
	&\phantom{={}}+ 16\kappa^5 \int q_\kappa^{(s)} \langle\del_x, [\partial^{s+1}, R_0W R_0W R_0] \del_x \rangle \dx
	\label{eq:a priori Hs 6} \\
	&\begin{aligned}
	\phantom{={}}+ 16\kappa^5 \int q_\kappa^{(s)} \big\{ &g(\kappa,q_\kappa+W) + \langle \del_x , R_0(q_\kappa+W)R_0 \del_x \rangle \\[-0.5em]
	&- \langle \del_x , R_0(q_\kappa+W)R_0(q_\kappa+W)R_0 \del_x \rangle \big\}^{(s+1)} \dx .
	\end{aligned}
	\label{eq:a priori Hs 5}
	\end{align}
	We will estimate the terms \eqref{eq:a priori Hs 1}--\eqref{eq:a priori Hs 5} separately.
	
	The first linear contribution~\eqref{eq:a priori Hs 1} vanishes.  To see this, we use the first operator identity of~\eqref{eq:linear op id} to write
	\begin{equation*}
	\eqref{eq:a priori Hs 1}
	= \int q_\kappa^{(s)} \big\{ {-16}\kappa^4 R_0(2\kappa) q^{(s+1)}_\kappa + 4\kappa^2 q^{(s+1)}_\kappa \big\} \dx
	= 0 .
	\end{equation*}
	In the last equality we noted that the integrand is odd in Fourier variables, or equivalently that the integrand of~\eqref{eq:a priori Hs 1} is a total derivative.
	
	Now we estimate the linear contribution~\eqref{eq:a priori Hs 2} from $W$.  Using the operator identity~\eqref{eq:linear op id} and recalling that $W'$ is Schwartz, we estimate
	\begin{align*}
	|\eqref{eq:a priori Hs 2}|
	&= \left| \int q_\kappa^{(s)} \{ - W^{(s+3)} - [ R_0(2\kappa) W^{(s+5)} ] \} \dx \right| \\
	&\lesssim \snorm{ q_\kappa^{(s)} }_{L^2} \big( \snorm{W^{(s+3)}}_{L^2} + \kappa^{-2} \snorm{W^{(s+5)}}_{L^2} \big)
	\lesssim E_s^{1/2} \lesssim E_s + 1 .
	\end{align*}
	
	In the first quadratic term~\eqref{eq:a priori Hs 3} we distribute the derivatives $[\partial^{s+1},\cdot]$.  For the terms with $q_\kappa^{(s+1)}$, we ``integrate by parts'' to write
	\begin{align*}
	&16\kappa^5 \big( \tr\big\{ q_\kappa^{(s)} R_0q_\kappa^{(s+1)} R_0 q_\kappa R_0 \big\} + \tr\big\{ q_\kappa^{(s)} R_0q_\kappa R_0 q_\kappa^{(s+1)} R_0 \big\}  \big) \\
	&= 16\kappa^5 \tr\big\{ \big[\partial,q_\kappa^{(s)} R_0 q_\kappa^{(s)} R_0 \big] q_\kappa R_0 \big\}
	= - 16\kappa^5 \tr\big\{ q_\kappa^{(s)} R_0q_\kappa^{(s)} R_0[\partial,q_\kappa ]R_0 \big\} .
	\end{align*}
	This leaves
	\begin{equation*}
	|\eqref{eq:a priori Hs 3}|
	\lesssim \kappa^5 \sum_{j=1}^s \big|\tr\big\{ q_\kappa^{(s)}R_0 q_\kappa^{(j)}R_0 q_{\kappa}^{(s+1-j)} R_0 \big\}\big| + \big|\tr\big\{ q_\kappa^{(s)}R_0 q_\kappa^{(s-1)}R_0 q_{\kappa}^{(s-1)} R_0 \big\}\big| .
	\end{equation*}
	The last term only appears in the case $s=3$, but we can see that it vanishes by writing $q_\kappa^{(s)} = [ \partial , q_\kappa^{(s-1)} ]$ and cycling the trace.  Note that all copies of $q_\kappa$ now have at most $s$ derivatives.  We put the two highest order factors of $q_\kappa$ in $L^2$ using the identity~\eqref{eq:hskappa identity 1} and the observation $\norm{f}_{H^{-1}_\kappa} \lesssim \kappa^{-1} \norm{f}_{L^2}$.  As $s\geq 3$, the third factor $q^{(j)}_\kappa$ has order $j\leq s-2$ and may be estimated in operator norm because $\snorm{ q_\kappa^{(j)} }_{L^\infty} \leq \norm{q_\kappa}_{H^{s-1}} \lesssim 1$ by inductive hypothesis.  This yields
	\begin{equation*}
	|\eqref{eq:a priori Hs 3}|
	\lesssim \snorm{ q^{(s)}_\kappa }_{L^2}^2 + \snorm{ q^{(s)}_\kappa }_{L^2}
	\lesssim E_s + 1 .
	\end{equation*}
	
	The second quadratic contribution~\eqref{eq:a priori Hs 4} is similar.  For the terms with $q_\kappa^{(s+1)}$ we ``integrate by parts'' to write
	\begin{align*}
	&16\kappa^5 \big( \tr\big\{ q_\kappa^{(s)} R_0q_\kappa^{(s+1)} R_0 W R_0 \big\} + \tr\big\{ q_\kappa^{(s)} R_0 W R_0 q_\kappa^{(s+1)} R_0 \big\}  \big) \\
	&= 16\kappa^5 \tr\big\{ \big[\partial,q_\kappa^{(s)} R_0 q_\kappa^{(s)} R_0 \big] W R_0 \big\}
	= - 16\kappa^5 \tr\big\{ q_\kappa^{(s)} R_0q_\kappa^{(s)} R_0[\partial,W]R_0 \big\} .
	\end{align*}
	In all cases we put the two factors of $q_\kappa$ in $L^2$ using the identity~\eqref{eq:hskappa identity 1} and the observation $\norm{f}_{H^{-1}_\kappa} \lesssim \kappa^{-1} \norm{f}_{L^2}$, and the remaining factors in operator norm.  This yields
	\begin{equation*}
	|\eqref{eq:a priori Hs 4}|
	\lesssim \snorm{ q^{(s)}_\kappa }_{L^2}^2 + \snorm{ q^{(s)}_\kappa }_{L^2}
	\lesssim E_s + 1 .
	\end{equation*}
	
	The quadratic $W$ contribution~\eqref{eq:a priori Hs 6} is easily estimated.  We put $q_\kappa^{(s)}$ and the higher order $W$ term in $L^2$ using the identity~\eqref{eq:hskappa identity 1} and the observation $\norm{f}_{H^{-1}_\kappa} \lesssim \kappa^{-1} \norm{f}_{L^2}$, and we put the remaining factor of $W$ in $L^\infty$.  This yields
	\begin{equation*}
	|\eqref{eq:a priori Hs 6}|
	\lesssim \snorm{ q^{(s)}_\kappa }_{L^2}
	\lesssim E_s + 1 .
	\end{equation*}
	
	Next we turn to the series tail~\eqref{eq:a priori Hs 5}.  Applying the tail convergence~\eqref{eq:tail conv} to $q = q_\kappa$, we know there exists a constant $\kappa_0(E_s(t))$ so that 
	\begin{align*}
	16\kappa^5 \big\lVert \big\{ &g(\kappa,q_\kappa+W) + \langle \del_x , R_0(q_\kappa+W)R_0 \del_x \rangle \\
	&- \langle \del_x , R_0(q_\kappa+W)R_0(q_\kappa+W)R_0 \del_x \rangle \big\}^{(s+1)} \big\rVert_{L^2} \leq 1 
	\end{align*}
	uniformly for $\kappa\geq \kappa_0(E_s(t))$.  Therefore, by Cauchy--Schwarz we have
	\begin{equation*}
	| \eqref{eq:a priori Hs 5} |
	\leq (2E_s)^{1/2}
	\lesssim E_s + 1
	\quad\tx{uniformly for }\kappa \geq \kappa_0(E_s(t)) .
	\end{equation*}
	
	Altogether, we have shown that there exists a constant $C = C(T,A)$ such that
	\begin{equation*}
	\left| \ddt E_s \right| \leq C( E_s + 1 )
	\quad\tx{uniformly for }|t|\leq T\tx{ and }\kappa \geq \kappa_0(E_s(t)) .
	\end{equation*}
	Gr\"onwall's inequality then yields the bound
	\begin{equation*}
	E_s(t) \leq (E_s(0) + 1) e^{CT} - 1
	\quad\tx{uniformly for }|t|\leq T,\ \kappa \geq \kappa_0\big( (E_s(0) + 1) e^{CT} - 1 \big) ,
	\end{equation*}
	which concludes the inductive step.
\end{proof}

As a consequence, we are able to upgrade local well-posedness to global well-posedness:
\begin{cor}
	\label{thm:thk gwp}
	Given an integer $s\geq 0$ and $A,T>0$, there exists a constant $\kappa_0$ so that for $\kappa \geq \kappa_0$ the tidal $H_\kappa$ flows~\eqref{eq:thk} with initial data in the closed ball $B_A \subset H^s(\R)$ of radius $A$ are globally well-posed.
\end{cor}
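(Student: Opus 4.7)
The plan is to upgrade the local well-posedness of \cref{thm:thk lwp} to global well-posedness by iteration, using the \textit{a priori} bounds from \cref{thm:a priori L2,thm:a priori H1,thm:a priori H2,thm:a priori Hs} to prevent blow-up on any prescribed time interval $[-T,T]$.

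First I would fix the integer $s\geq 0$ and constants $A,T>0$, and invoke the appropriate \textit{a priori} proposition (\cref{thm:a priori L2} for $s=0$, \cref{thm:a priori H1} for $s=1$, \cref{thm:a priori H2} for $s=2$, and \cref{thm:a priori Hs} for $s\geq 3$) to produce constants $C_1 = C_1(T,A)$ and $\kappa_1 = \kappa_1(T,A)$ such that any solution $q_\kappa(t)$ to \eqref{eq:thk} with initial data $q(0)\in B_A\subset H^s$ obeys
\begin{equation*}
\snorm{ q_\kappa(t) }_{H^s} \leq C_1 \quad\text{for all }|t|\leq T\text{ and }\kappa\geq \kappa_1 ,
\end{equation*}
so long as $q_\kappa(t)$ exists on $[-T,T]$. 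Next I would apply \cref{thm:thk lwp} to the closed ball $B_{C_1}\subset H^s$ of radius $C_1$: this yields a constant $\kappa_2=\kappa_2(C_1)$ and a uniform local time $\tau = \tau(C_1)>0$ on which the tidal $H_\kappa$ flow is well-posed for any initial data of $H^s$-norm at most $C_1$, provided $\kappa \geq \kappa_2$.

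Setting $\kappa_0 := \max\{\kappa_1,\kappa_2\}$ and fixing $\kappa\geq \kappa_0$, I would then run the following bootstrap. For $q(0)\in B_A\subset B_{C_1}$, local well-posedness produces a unique solution $q_\kappa(t)$ on $[-\tau,\tau]$ depending continuously on $q(0)$. By the \textit{a priori} bound, $\snorm{q_\kappa(\pm\tau)}_{H^s} \leq C_1$, so we may re-apply local well-posedness with initial data $q_\kappa(\pm\tau)$ to extend the solution by another $\tau$-interval in each direction. Iterating this step $\lceil T/\tau\rceil$ times yields a solution on $[-T,T]$, and uniqueness propagates through each iteration. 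Continuous dependence on the initial data likewise follows: on each subinterval the data-to-solution map is Lipschitz by the contraction argument in \cref{thm:thk lwp}, and the composition of finitely many such maps is continuous.

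There is no real obstacle here beyond organizing the iteration: the genuine work went into proving the \textit{a priori} estimates of the previous propositions, and these were designed precisely to provide the uniform (in $\kappa$ and $t$) control of $\snorm{q_\kappa(t)}_{H^s}$ needed for the bootstrap to close. The only point requiring mild care is the $\kappa$-dependence: both the \textit{a priori} propositions and \cref{thm:thk lwp} demand that $\kappa$ exceed a threshold depending on the ball radius, but because the bootstrap stays within the fixed ball $B_{C_1}$, a single choice of $\kappa_0=\kappa_0(T,A)$ suffices throughout.
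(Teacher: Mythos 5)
Your proposal is correct and follows essentially the same route as the paper: invoke the \emph{a priori} bounds of \cref{thm:a priori L2,thm:a priori H1,thm:a priori H2,thm:a priori Hs} to confine the solution to a fixed ball $B_C\subset H^s$ for $|t|\leq T$, then iterate the contraction/local well-posedness argument of \cref{thm:thk lwp} on that ball with a single uniform local time and a single threshold $\kappa_0(T,A)$. Your explicit attention to the $\kappa$-thresholds matching across the two ingredients is exactly the point the paper's (terser) proof relies on.
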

\begin{proof}
	Fix $A, T>0$, let $C$ be the constant guaranteed by \cref{thm:a priori L2,thm:a priori H1,thm:a priori H2,thm:a priori Hs}, and consider the closed ball $B_C \subset H^s$ of radius $C$.  By local well-posedness (cf.~\cref{thm:thk lwp}) we know there exists $\del>0$ such that the integral equation is a contraction on $C_tB_C([-\del,\del]\times\R)$, and hence there exists a unique fixed point $q_\kappa$.  However, by \cref{thm:a priori L2,thm:a priori H1,thm:a priori H2,thm:a priori Hs} we know that $q_\kappa(t)$ is in $B_C$ as long as $|t|\leq T$.  Therefore, we may iterate the contraction argument to construct a unique solution in $C_tH^s([-T,T]\times\R)$ that depends continuously upon the initial data.
\end{proof}

\section{Convergence at low regularity}
\label{sec:low reg}

Ultimately, we want to show that for initial data in $H^s$ with $s\geq 3$ the solutions $q_\kappa(t)$ to the tidal $H_\kappa$ flows converge in $H^s$.  Although the linear and quadratic terms of the tidal $H_\kappa$ flow formally converge to tidal KdV as $\kappa\to\infty$, the first term in the error contains $q^{(5)}_\kappa$ (cf.~\eqref{eq:linear op id}).  Consequently, we will first demonstrate convergence in $H^{-2}$ so that we may absorb these five extra derivatives:
\begin{prop}
	\label{thm:conv low reg}
	Given $T>0$ and a bounded set $Q\subset H^3$ of initial data, the corresponding solutions $q_\kappa(t)$ to the tidal $H_\kappa$ flows~\eqref{eq:thk} are Cauchy in $C_tH^{-2}([-T,T]$ $\times\R)$ as $\kappa\to\infty$ uniformly for $q(0)\in Q$.
\end{prop}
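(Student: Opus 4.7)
The plan is to leverage the commutativity of the tidal $H_\kappa$ flows, inherited from $\{H_\kappa,H_\varkappa\}=0$ (cf.~\eqref{eq:hk flow 2}). Given $q(0)\in Q$ and $|t|\leq T$, define the interpolating path
\begin{equation*}
\psi(s):=e^{(t-s)J\nabla\thvk}\,e^{sJ\nabla\thk}q(0),\qquad s\in[0,t],
\end{equation*}
so that $\psi(0)=q_\varkappa(t)$ and $\psi(t)=q_\kappa(t)$. Writing $X_\kappa(q):=16\kappa^5 g'(\kappa,q+W)+4\kappa^2(q+W)'$ for the vector field of~\eqref{eq:thk}, commutativity of the flows yields $\psi'(s)=(X_\kappa-X_\varkappa)(\psi(s))$, and integrating gives
\begin{equation*}
q_\kappa(t)-q_\varkappa(t)=\int_0^t (X_\kappa-X_\varkappa)(\psi(s))\,\ds.
\end{equation*}
Two iterated applications of \cref{thm:a priori Hs} at $s=3$ confine $\psi(s)$ to a fixed $H^3$-ball for $s\in[0,t]$, $|t|\leq T$ and $\kappa,\varkappa$ sufficiently large.

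Next, I isolate the tidal KdV vector field inside $X_\kappa$. Expanding $g(\kappa,q+W)$ in its Neumann series and applying~\eqref{eq:linear op id} to the linear term and~\eqref{eq:quad op id} to the quadratic term yields the decomposition
\begin{equation*}
X_\kappa(q)=-(q+W)'''+6(q+W)(q+W)'+E_\kappa(q),
\end{equation*}
where $E_\kappa(q)$ consists of three pieces: (i) the linear remainder $-R_0(2\kappa)\partial^4(q+W)'$ (the last summand in~\eqref{eq:linear op id}); (ii) the quadratic remainder, comprising the four $R_0(2\kappa)$-corrections in~\eqref{eq:quad op id} with $f=h=q+W$, subject to a final $\partial_x$; and (iii) the cubic-and-higher tail $16\kappa^5\partial_x[g-\text{linear}-\text{quadratic}]$. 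The tidal KdV parts cancel in $X_\kappa-X_\varkappa$, reducing the problem to showing $\|E_\kappa(q)\|_{H^{-2}}\to 0$ uniformly for $\|q\|_{H^3}\leq A$.

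Each term is dispatched individually. A direct symbol computation gives $\sup_\xi \xi^{10}/[(\xi^2+4\kappa^2)^2(1+\xi^2)^5]\lesssim\kappa^{-4}$, yielding $\|R_0(2\kappa)\partial^4 f'\|_{H^{-2}}\lesssim\kappa^{-2}\|f\|_{H^3}$ and handling the linear remainder. The quadratic remainder is analogous, using $\|R_0(2\kappa)\partial^j\|_{L^p\to L^p}\lesssim\kappa^{j-2}$ for $j\leq 2$ and Sobolev embedding to counteract the $\kappa^2$ prefactors in~\eqref{eq:quad op id}. The tail is controlled by~\eqref{eq:tail conv}: applied at $s=1$ (which suffices since $\psi(s)$ is bounded in $H^3\subset H^1$), it provides $\kappa^5$-weighted decay of the derivatives of the tail, hence decay of $16\kappa^5(\text{tail})'$ in $L^2\hookrightarrow H^{-2}$.

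The main obstacle is the linear remainder: its symbol $\xi^5/(\xi^2+4\kappa^2)$ grows like $\xi^3$ for $|\xi|\lesssim\kappa$ and like $|\xi|$ for $|\xi|\gtrsim\kappa$, forcing a loss of five derivatives. This dictates the $H^3\to H^{-2}$ trade-off and is precisely why convergence in $H^s$ at the natural regularity $s\geq 3$ will require the Fourier-tail control developed in \cref{sec:equicty}.
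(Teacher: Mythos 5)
Your proposal is correct and follows essentially the same route as the paper: commutativity of the tidal flows reduces the problem (via the fundamental theorem of calculus along a connecting path, which the paper phrases through the difference flow $\thvk-\thk$) to showing that the vector field minus the tidal KdV vector field tends to zero in $H^{-2}$ uniformly on bounded $H^3$ sets, and your treatment of the linear remainder via~\eqref{eq:linear op id}, the quadratic remainder via~\eqref{eq:quad op id}, and the cubic-and-higher tail via~\eqref{eq:tail conv} matches the paper's term-by-term estimates. The only difference is cosmetic: your interpolating path $\psi(s)$ versus the paper's explicit difference flow, both of which require the same a priori $H^3$ bounds from \cref{thm:a priori Hs}.
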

\begin{proof}
	In the following all spacetime norms will be taken over the slab $[-T, T] \times \R$.  Let $\kappa_0$ denote the constant from \cref{thm:thk gwp} for $s=3$, so that for $\kappa\geq \kappa_0$ the solutions $q_\kappa(t)$ to the $H_\kappa$ flows exist in $C_tH^3$.
	
	Consider the difference $q_\kappa - q_\varkappa$ of two of these solutions with $\varkappa\geq \kappa\geq \kappa_0$.  Recall that the tidal $H_\kappa$ and tidal $H_\varkappa$ flows commute (cf.~\eqref{eq:hk flow 2}).  Letting $\thk$ denote the tidal $H_\kappa$ flow Hamiltonian, this allows us to write
	\begin{equation*}
	q_\varkappa(t)
	= e^{tJ\nabla \thvk} q(0)
	= e^{tJ\nabla( \thvk - \thk )} e^{tJ\nabla\thk} q(0) .
	\end{equation*}
	Consequently, we estimate
	\begin{equation*}
	\norm{ q_{\kappa}-q_{\varkappa} }_{C_{t} H^{-1}} \leq \sup _{q \in Q^*_T(\kappa)}\, \sup _{\varkappa \geq \kappa}\ \snorm{ e^{t J \nabla(\thvk - \thk)} q-q }_{C_{t} H^{-1}} ,
	\end{equation*}
	for the set
	\begin{equation*}
	Q^*_T(\kappa) := \{ e^{tJ\nabla\thk} q(0) : |t|\leq T,\ q(0)\in Q \}
	\end{equation*}
	of tidal $H_\kappa$ flows.  By the fundamental theorem of calculus, it suffices to show that under the difference flow $\thvk - \thk$ we have
	\begin{equation*}
	\sup_{q\in Q^*_T(\kappa)}\, \sup_{\varkappa\geq \kappa}\ \norm{ \frac{dq}{dt} }_{C_tH^{-2}} \to 0 \quad\tx{as }\kappa\to\infty .
	\end{equation*}
	Note that $Q^*_T(\kappa)$ is a bounded subset of $H^3$ by the \ti{a priori} estimate of \cref{thm:a priori Hs}.
	
	Given initial data $q(0) \in Q^*_T(\kappa)$, let $q(t)$ denote the corresponding solution to the difference flow $\thvk - \thk$.  Then $q$ solves
	\begin{equation*}
	\ddt q = 16\varkappa^5 g'(\kappa,q+W) + 4\varkappa^2(q+W)' - 16\kappa^5 g'(\varkappa,q+W) - 4\kappa^2(q+W)' .
	\end{equation*}
	To exhibit cancellation in the limit $\varkappa, \kappa \to \infty$, we expand $g'(\kappa,q+W)$ into a series in $q$ and $W$ and extract the linear and quadratic terms:
	\begin{align}
	&\ddt q 
	\nonumber \\
	&\begin{aligned}
	= \big\{ &{-16}\varkappa^5 \langle \del_x, R_0(\varkappa)(q+W)R_0(\varkappa) \del_x \rangle + 4\varkappa^2(q+W) \\
	&+16\kappa^5 \langle \del_x, R_0(\kappa)(q+W)R_0(\kappa) \del_x \rangle - 4\kappa^2(q+W) \big\}'
	\end{aligned}
	\label{eq:diff flow 1}\\
	&\begin{aligned}
	\phantom{={}} + \big\{ &16\varkappa^5 \langle \del_x, R_0(\varkappa)(q+W)R_0(\varkappa)(q+W)R_0(\varkappa) \del_x \rangle \\
	&- 16\kappa^5 \langle \del_x, R_0(\kappa)(q+W)R_0(\kappa)(q+W)R_0(\kappa) \del_x \rangle \big\}' 
	\end{aligned}
	\label{eq:diff flow 2}\\
	&\phantom{={}} + \sum (\tx{terms with 3 or more }q\tx{ or }W) .
	\label{eq:diff flow 3}
	\end{align}
	We will show that each of the terms \eqref{eq:diff flow 1}--\eqref{eq:diff flow 3} converge to zero.
	
	For the linear term~\eqref{eq:diff flow 1}, we use the operator identity~\eqref{eq:linear op id} to estimate
	\begin{align*}
	&\norm{ \eqref{eq:diff flow 1} }_{H^{-2}}
	= \snorm{ [ {-R_0}(2\varkappa) + R_0(2\kappa) ] (q+W)^{(5)} }_{H^{-2}} \\
	&\lesssim (\varkappa^{-2} + \kappa^{-2}) \big( \snorm{ q^{(5)} }_{H^{-2}} + \snorm{ W^{(5)} }_{H^{-2}} \big)
	\lesssim \kappa^{-2} \big( \norm{ q }_{H^{3}} + \norm{ W''' }_{L^2} \big)
	\end{align*}
	uniformly for $\varkappa\geq \kappa$.  As $q\in Q^*_T(\kappa)$ is bounded in $H^3$, we conclude that
	\begin{equation*}
	\sup_{q\in Q^*_T(\kappa)}\, \sup_{\varkappa\geq \kappa}\ \norm{ \eqref{eq:diff flow 1} }_{C_tH^{-2}} \to 0 \quad\tx{as }\kappa\to\infty .
	\end{equation*}
	
	For the quadratic term~\eqref{eq:diff flow 2}, we add and subtract the corresponding tidal KdV term $6(q+W)(q+W)'$ and estimate
	\begin{align*}
	\norm{ \eqref{eq:diff flow 2} }_{H^{-2}} 
	&\lesssim \norm{ 16\varkappa^5 \langle \del_x, R_0(\varkappa)(q+W)R_0(\varkappa)(q+W)R_0(\varkappa) \del_x \rangle - 3(q+W)^2 }_{H^{-1}} \\
	&\phantom{\lesssim{}} + \norm{ 16\kappa^5 \langle \del_x, R_0(\kappa)(q+W)R_0(\kappa)(q+W)R_0(\kappa) \del_x \rangle - 3(q+W)^2 }_{H^{-1}} .
	\end{align*}
	Using the operator identity~\eqref{eq:quad op id} and the estimates $\snorm{ R_{0}(2 \kappa) \partial^j }_\op \lesssim \kappa^{j-2}$ for $j=0,1,2$ (the estimate for $j = 0$ is also true as an operator on $L^\infty$ by the explicit kernel formula for $R_0$ and Young's inequality), one can easily prove by duality that
	\begin{equation*}
	\norm{ 16\kappa^5 \langle \del_x, R_0(\kappa)fR_0(\kappa)gR_0(\kappa) \del_x \rangle - 3fg }_{L^2}
	\lesssim \kappa^{-2} \norm{ f }_{W^{2,\infty}} \norm{ g }_{H^2} .
	\end{equation*}
	Moreover, the roles of $f$ and $g$ can be exchanged since the identity~\eqref{eq:quad op id} is symmetric in $f$ and $g$.  Therefore, expanding the products $(q+W)(q+W)$ we have
	\begin{equation*}
	\norm{ \eqref{eq:diff flow 2} }_{H^{-2}}
	\lesssim (\varkappa^2 + \kappa^2) \big( \norm{q}^2_{H^3} + \norm{W}_{W^{3,\infty}} \norm{q}_{H^3} + \norm{W}_{W^{2,\infty}} \norm{W'}_{H^2} \big) .
	\end{equation*}
	As $q\in Q^*_T(\kappa)$ is bounded in $H^3$, we conclude that
	\begin{equation*}
	\sup_{q\in Q^*_T(\kappa)}\, \sup_{\varkappa\geq \kappa}\ \norm{ \eqref{eq:diff flow 2} }_{C_tH^{-2}} \to 0 \quad\tx{as }\kappa\to\infty .
	\end{equation*}
	
	It only remains to show that the tails~\eqref{eq:diff flow 3} converge to zero in $C_tH^{-2}$.  In fact, by~\eqref{eq:tail conv} we have convergence in the stronger $C_tL^2$ norm:
	\begin{equation*}
	\sup_{q\in Q^*_T(\kappa)}\, \sup_{\varkappa\geq \kappa}\ \norm{ \eqref{eq:diff flow 3} }_{C_tL^2} \to 0 \quad\tx{as }\kappa\to\infty . \qedhere
	\end{equation*}
\end{proof}

\section{Equicontinuity}
\label{sec:equicty}

We want to upgrade the $H^{-2}$ convergence of the previous section to $H^s$, $s\geq 3$.  This will be accomplished via the estimate
\begin{equation*}
\norm{ q_\varkappa - q_\kappa }_{H^s}^2
\lesssim (N+1)^{s+2} 	\norm{ q_\varkappa - q_\kappa }_{H^{-2}}^2 + \norm{ q_\varkappa - q_\kappa }_{H^s(|\xi|\geq N)}^2 .
\end{equation*}
In this section, we will show that we can pick $N$ sufficiently large so that the second term on the RHS is arbitrarily small uniformly for $\kappa,\varkappa$ large.  It then follows from \cref{thm:conv low reg} that the first term on the RHS converges to zero as $\kappa,\varkappa \to \infty$.

Uniform control over Fourier tails is called equicontinuity.  Specifically, a set $Q\subset H^s$ is \ti{equicontinuous} in $H^s$ if
\begin{equation*}
\int_{|\xi|\geq N} (\xi^2 + 1)^s | \hat{q} (\xi) |^2 \dxi
\to 0 \quad\tx{as }N\to\infty,\tx{ uniformly for }q\in Q. 
\end{equation*}
This is equivalent to the notion of equicontinuity in the $L^p$ precompactness theorem (cf. \cite[Lem.~4.2]{Killip2019}).  In particular, precompact subsets of $H^s$ are equicontinuous in $H^s$.

It would suffice to show that the tidal $H_\kappa$ flows $\{ q_\kappa(t) : \kappa \geq \kappa_0 \}$ on bounded time intervals are equicontinuous.  With the presence of the background wave $W$ in tidal KdV we expect the quantity $\norm{ q_\kappa(t) }_{H^s}$ to grow, and so we must estimate this growth.  Expanding the diagonal Green's function in powers of $q_\kappa$ and $W$, we are able to control the linear and quadratic terms as we would for tidal KdV; however, it remains to control the higher order contributions which vanish in the limit $\kappa\to\infty$.  Consequently, instead of honest equicontinuity for the tidal $H_\kappa$ flows $q_\kappa(t)$, we will require $\kappa \geq N$ in \cref{thm:equicty} so that $\bigo(\kappa^{-1})$ contributions as $\kappa\to\infty$ are also $\bigo(N^{-1})$ as $N\to\infty$.

In order to control the Fourier tail growth we will use a smooth Littlewood-Paley decomposition.  We define Littlewood--Paley pieces via the following $L^2$-based partition of unity.  Fix a $C^{\infty}$ function $\phi: \R \to [0,1]$ that satisfies
\begin{equation*}
\phi(\xi) = \begin{cases}
1 & |\xi|\leq 1 ,\\
0 & |\xi|\geq 2 .
\end{cases}
\end{equation*}
Then the function
\begin{equation*}
\psi(\xi) := \sqrt{ \phi(\xi) - \phi(2\xi) }
\quad\tx{satisfies}\quad
\sum_{N \in 2^{\Z}} \psi^2(\tfrac{\xi}{N}) = 1 \quad\tx{for all }\xi\neq 0.
\end{equation*}
Sums over capitalized indices will always be over the set $2^{\Z} := \{ 2^n : n\in\Z \}$.  For Schwartz functions $f$ we define
\begin{equation*}
\wh{P_Nf}(\xi)= \psi(\tfrac{\xi}{N})\hat{f}(\xi) ,\qquad
\wh{P^2_{\geq N}f}(\xi) = \sum_{K\geq N} \psi^2(\tfrac{\xi}{K})\hat{f}(\xi) , \qquad
P^2_{< N} = 1 - P^2_{\geq N} .
\end{equation*}
Our choice of partition of unity ensures that the square sum $\sum P_N^2 f$ converges to $f$ in $L^p$ for $p\in (1,\infty)$.  We choose a square-sum decomposition because we will ultimately measure $\snorm{ P_{\geq N} q_\kappa^{(s)} }_{L^2}^2$, which we may write as the $L^2$-pairing of $P_{\geq N}^2 q_\kappa^{(s)}$ and $q_\kappa^{(s)}$.

We remark that directly estimating the growth of $\snorm{ P_{\geq N} q^{(s)} }_{L^2}^2$ would fail due to the quadratic term of tidal KdV.  Indeed, if we compute $\ddt \snorm{ P_{\geq N} q^{(s)} }_{L^2}^2$ under the tidal KdV flow, we obtain a term of the form
\begin{equation*}
\int \big( P_{\geq N}^2 q^{(s)} \big) \left( 3 q^2 \right)^{(s+1)} \dx .
\end{equation*}
Decomposing each factor of $q = P_{\geq N}^2 q + P_{< N}^2 q$, the terms with at least one copy of $P_{\geq N}^2 q$ can be estimated by two factors of $\snorm{ P_{\geq N} q^{(s)} }_{L^2}$.  However, the high-low-low term
\begin{equation*}
\int \big( P_{\geq N}^2 q^{(s)} \big) \left[ 3 \left( P_{< N}^2 q \right) \left( P_{< N}^2 q \right) \right]^{(s+1)} \dx
\end{equation*}
only contributes one factor of $\snorm{ P_{\geq N} q^{(s)} }_{L^2}$, which does not guarantee that initially small Fourier tails remain small.

To overcome this, we introduce a more gradual high-frequency cutoff.  Given an integer $s\geq 3$ and a Schwartz function $f$, we define the Fourier multiplier 
\begin{equation}
	\wh{ \pihi f } (\xi) = \mhi(\tfrac{\xi}{N}) \wh{f}(\xi) , \qquad
	\mhi(\xi) = \sum_{K<1} K^s \psi^2(\tfrac{\xi}{K}) + \sum_{K\geq 1} \psi^2(\tfrac{\xi}{K}) .
	\label{eq:proj op}
\end{equation}
The power of $s$ in the definition~\eqref{eq:proj op} will provide us with the replacement~\eqref{eq:bernstein pihi} for the Bernstein inequality satisfied by $P^2_{\geq N}$.  We also define
\begin{equation*}
\wh{ \pilo f } (\xi) = \sqrt{ 1 - \mhi^2(\tfrac{\xi}{N}) } \wh{f}(\xi)
\qquad\tx{so that}\qquad
\pilo^2 + \pihi^2 = 1 .
\end{equation*}

For the Littlewood--Paley operators we have the familiar Bernstein inequalities
\begin{equation}
\begin{aligned}
&\snorm{ P_N f^{(j)} }_{L^p} \sim N^j \snorm{ P_N f }_{L^p} \quad\tx{for }p\in(1,\infty),\ j\in\Z, \\ 
&\snorm{ P_N f^{(j)} }_{L^\infty} \lesssim N^j \snorm{ P_N f }_{L^\infty} \quad\tx{for }j>0 .
\end{aligned}
\label{eq:bernstein 1}
\end{equation}
Summing over $N\in 2^\N$, we obtain the high and low frequency projection estimates
\begin{align}
&\snorm{ P^2_{<N} f^{(j)} }_{L^p} \lesssim N^j \snorm{ P^2_{<N} f }_{L^p} \quad\tx{for }p\in[1,\infty],\ j>0 ,
\label{eq:bernstein 3}\\
&\snorm{ P^2_{\geq N} f }_{L^p} \lesssim N^{-j} \snorm{ P^2_{\geq N} f^{(j)} }_{L^p} \quad\tx{for }p\in(1,\infty),\ j>0 .
\label{eq:bernstein 2}
\end{align}

We will now obtain analogous Bernstein inequalities for our projection operators $\pihi$ and $\pilo$:
\begin{lem}
	Fix an integer $s\geq 3$.  Then the operators $\pihi$ defined in~\eqref{eq:proj op} are bounded on $L^p$ for $p\in[1,\infty]$ uniformly in $N$, and we have the estimates
	\begin{align}
	&\snorm{ \pilo^2 q^{(s+j)} }_{L^p} \lesssim N^j \snorm{ P^2_{<2N} q^{(s)} }_{L^p} \quad\tx{for }p\in[1,\infty],\ j>0 ,
	\label{eq:bernstein pilo}\\
	&\snorm{ \pihi^2 q^{(s-j)} }_{L^p} \lesssim N^{-j} \snorm{ \pihi q^{(s)} }_{L^p} \quad\tx{for }p\in(1,\infty),\ 0<j\leq s.
	\label{eq:bernstein pihi}
	\end{align}
\end{lem}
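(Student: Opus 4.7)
The argument hinges on the observation that $\mhi(\eta)\equiv 1$ for $|\eta|\geq 1$. Any $K<1$ contributing to $\mhi(\eta)$ must satisfy $\eta/K\in\operatorname{supp}\psi\subset[1/2,2]$, i.e., $K\in[|\eta|/2,2|\eta|]$; for $|\eta|\geq 1$ the only dyadic candidate is $K=1/2$, which forces $|\eta/K|\geq 2$ outside $\operatorname{supp}\psi$. Hence only $K\geq 1$ contribute at $|\eta|\geq 1$ and $\sum_{K\geq 1}\psi^2(\eta/K)=\sum_K\psi^2(\eta/K)=1$. Consequently $\mhi(\xi/N)-1$ and $1-\mhi^2(\xi/N)$ are both supported in $|\xi|\leq N$. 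Near the origin only dyadic $K\sim|\eta|$ contribute, weighted by $K^s$, which gives $\mhi^{(k)}(\eta)=O(|\eta|^{s-k})$ and the regularity $\mhi\in C^{s-1}(\R)\cap C^\infty(\R\setminus\{0\})$. The boundedness of $\pihi$ on $L^p$ for every $p\in[1,\infty]$ then follows from writing $\pihi=I+(\pihi-I)$: the symbol $\mhi(\xi/N)-1$ is $C^{s-1}$ with compact support, and since $s-1\geq 2$ its inverse Fourier transform lies in $L^1$ with $N$-independent norm (after rescaling).

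For \eqref{eq:bernstein pilo}, the symbol of $P^2_{<2N}$ equals $1$ on $|\xi|<N$ (since $P^2_{\geq 2N}$ is supported in $|\xi|\geq N$), matching the Fourier support of $\pilo^2$. Hence as multipliers $\pilo^2=\pilo^2\,P^2_{<2N}$, so that
\begin{equation*}
	\pilo^2 q^{(s+j)}=\pilo^2\partial^j\bigl(P^2_{<2N}q^{(s)}\bigr).
\end{equation*}
The symbol of $\pilo^2\partial^j$ is $N^j m(\xi/N)$ with $m(\eta)=(1-\mhi^2(\eta))(i\eta)^j$, which is $C^{s-1}$ and compactly supported in $|\eta|\leq 1$; hence $\check m\in L^1$ and Young's inequality yields $\|\pilo^2\partial^j\|_{L^p\to L^p}\lesssim N^j$ for every $p\in[1,\infty]$, uniformly in $N$.

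For \eqref{eq:bernstein pihi}, I divide symbols to write $\pihi^2 q^{(s-j)}=N^{-j}\tilde m(\partial_x/N)\pihi q^{(s)}$ with $\tilde m(\eta)=\mhi(\eta)(i\eta)^{-j}$, and verify the Mikhlin condition $|\eta|^k|\tilde m^{(k)}(\eta)|\lesssim 1$ for $k=0,1$: on $|\eta|\geq 1$, $\tilde m=(i\eta)^{-j}$ gives $|\eta|^k|\tilde m^{(k)}|\lesssim|\eta|^{-j}\leq 1$, while on $|\eta|<1$ the bound $\mhi^{(k)}=O(|\eta|^{s-k})$ propagates by Leibniz to $\tilde m^{(k)}=O(|\eta|^{s-j-k})$, giving $|\eta|^k|\tilde m^{(k)}|\lesssim|\eta|^{s-j}\leq 1$ thanks to the hypothesis $0<j\leq s$. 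The Mikhlin multiplier theorem then delivers the claim on $L^p$ for $p\in(1,\infty)$. The delicate point of the argument is the opening observation that $\mhi\equiv 1$ on $|\eta|\geq 1$; without it the Fourier support of $\pilo^2$ would extend up to $|\xi|\leq 2N$ and the clean factorization through $P^2_{<2N}$ in \eqref{eq:bernstein pilo} would break down.
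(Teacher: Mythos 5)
Your proposal is correct, and the structural observations it rests on are exactly the right ones: $\mhi\equiv 1$ on $|\eta|\geq 1$ (so that $1-\mhi^2(\cdot/N)$ and $\mhi(\cdot/N)-1$ are supported in $|\xi|\leq N$) and $\mhi^{(k)}(\eta)=O(|\eta|^{s-k})$ near the origin, giving $\mhi\in C^{s-1}$ with $s-1\geq 2$. Where you diverge from the paper is in how the three estimates are then extracted. The paper works blockwise: it writes $\pilo^2$ (resp.\ one factor of $\pihi$) as a sum of individual Littlewood--Paley pieces $P_K^2$ weighted by $1$ (resp.\ $(K/N)^s$), applies the classical Bernstein inequalities \eqref{eq:bernstein 1}--\eqref{eq:bernstein 2} to each piece, and sums the resulting geometric series; the $L^p$ bound comes from Young's inequality applied to the rescaled kernel of $\mlo$. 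You instead argue entirely at the symbol level: Young's inequality for the compactly supported $C^{s-1}$ symbols $\mhi-1$ and $(1-\mhi^2(\eta))(i\eta)^j$ (after the clean factorization $\pilo^2=\pilo^2 P^2_{<2N}$), and the one-dimensional Mikhlin theorem for $\tilde m(\eta)=\mhi(\eta)(i\eta)^{-j}$, whose condition is scale-invariant and hence automatically uniform in $N$. Both routes are elementary and correct. Yours has two concrete advantages: the $N$-uniformity is transparent by rescaling rather than by tracking constants through a dyadic sum, and the endpoint $j=s$ of \eqref{eq:bernstein pihi} is handled cleanly via $|\eta|^k|\tilde m^{(k)}(\eta)|\lesssim|\eta|^{s-j}\leq 1$, whereas the paper's blockwise bound produces the sum $\sum_{K<N}K^{s-j}$, which is only summable for $j<s$ as written. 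The paper's route, on the other hand, avoids invoking any multiplier theorem beyond Bernstein and stays inside the Littlewood--Paley machinery it has already set up and reuses in \cref{thm:comm lem} and \cref{thm:equicty}. One cosmetic point: at $|\eta|=1$, $K=1/2$ the ratio $|\eta/K|=2$ lies on the boundary of $\operatorname{supp}\psi$, but $\psi(\pm 2)=0$ since $\phi(2)=0$, so your conclusion $\mhi\equiv1$ on $|\eta|\geq1$ stands.
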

\begin{proof}
	Boundedness on $L^p$ follows from Young's inequality.  Indeed, if we let
	\begin{equation*}
	\mlo(\tfrac{\xi}{N}) = \sqrt{ 1 - \mhi^2(\tfrac{\xi}{N}) }
	\end{equation*}
	denote the Fourier symbol of $\pilo$, then we have $\mlo \in C_c^\infty$ and
	\begin{equation*}
	\norm{ \pilo f }_{L^{p}}
	= \norm{ N^{d} \mlo^\vee (N \cdot) * f }_{L^{p}} 
	\lesssim \norm{ N^{d} \mlo^\vee(N \cdot) }_{L^{1}} \norm{ f }_{L^{p}}
	= \norm{ \mlo^\vee }_{L^{1}} \norm{ f }_{L^{p}} 
	\end{equation*}
	for any $p\in [1,\infty]$.
	
	For the inequality~\eqref{eq:bernstein pilo} we may now assume that $q$ is Schwartz by approximation.  We use the Bernstein inequality~\eqref{eq:bernstein 1} to estimate
	\begin{align*}
	\snorm{ \pilo^2 q^{(s+j)} }_{L^p}
	&\leq \sum_{K<N} \snorm{ P^2_K q^{(s+j)} }_{L^p}
	\lesssim \sum_{K<N} K^j \snorm{ P^2_K q^{(s)} }_{L^p}\\
	&\lesssim \sum_{K<N} K^j \snorm{ P^2_{<2N} q^{(s)} }_{L^p}
	\lesssim N^j \snorm{ P^2_{<2N} q^{(s)} }_{L^p} .
	\end{align*}
	Note that in the second line we inserted the operator $P^2_{<2N}$ since $P^2_KP^2_{<2N} = P_K^2$ for $K<N$, and then used the boundedness of the operators $P^2_K$.
	
	For the inequality~\eqref{eq:bernstein pihi}, we use the Bernstein inequalities~\eqref{eq:bernstein 1} and \eqref{eq:bernstein 2} to estimate
	\begin{align*}
	&\snorm{ \pihi^2 q^{(s-j)} }_{L^p}
	\leq \sum_{K<N} \tfrac{K^s}{N^s} \snorm{ P_K^2 \pihi q^{(s-j)} }_{L^p} + \snorm{ P^2_{\geq N} \pihi q^{(s-j)} }_{L^p} \\
	&\lesssim \sum_{K<N} \tfrac{K^{s-j}}{N^{s}} \snorm{ \pihi q^{(s)} }_{L^p} + N^{-j} \snorm{ P_{\geq N}^2 \pihi q^{(s)} }_{L^p}
	\lesssim N^{-j} \snorm{ \pihi q^{(s)} }_{L^p}
	\end{align*}
	for Schwartz $q$.  Note that in the second line we spent a factor of $K^j$ to insert $j$ derivatives on $q$, and then used the boundedness of the operators $P_K$.
\end{proof}

Next, we will prove an estimate for a commutator involving $\pihi$ and $\pilo$:
\begin{lem}
	\label{thm:comm lem}
	Let $\fatp_M^2 = \sum_{K =M/4}^{4M} P_{K}^2$ denote a fattened Littlewood--Paley projection.  Then for all bounded functions $w\in L^\infty(\R^2)$ and Schwartz functions $f,g,h$ we have
	\begin{align*}
	&\begin{aligned}
	\bigg| \iint \big[ &\big( \wh{P_M^2 \pihi^2 f} \big)(\xi) \big( \wh{\pilo^2 h} \big)(\xi-\eta) \\
	&- \big( \wh{ P_M \pihi \pilo f } \big)(\xi) \big( \wh{ P_M \pihi \pilo h } \big)(\xi-\eta) \big] \big( \wh{ P^2_{< \frac{M}{8}} g } \big)(\eta) w(\xi,\eta) \dxi\deta \bigg| 
	\end{aligned} \\
	&\lesssim \norm{w}_{L^\infty} \snorm{ P_M \pihi f }_{L^2} \snorm{ P_{<\frac{M}{8}}^2 g' }_{H^1} \big( \tfrac{M^2}{N^3} \snorm{ \fatp_M^2 \pilo^2 h }_{L^2} + \snorm{ P_M \pihi \pilo h }_{L^2} \big)
	\end{align*}
	uniformly for $\kappa$ large.
\end{lem}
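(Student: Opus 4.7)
The central object is the ``commutator'' symbol
\[
c(\xi,\eta) := a(\xi)\,b(\xi-\eta) - b(\xi)\,a(\xi-\eta), \qquad a(\xi) := \psi(\xi/M)\mhi(\xi/N),\ \ b(\xi) := \mlo(\xi/N),
\]
which allows us to factor the integrand symbol as $a(\xi)^2 b(\xi-\eta)^2 - a(\xi)b(\xi)a(\xi-\eta)b(\xi-\eta) = a(\xi)b(\xi-\eta)\,c(\xi,\eta)$. Since $\xi \in \operatorname{supp}\psi(\cdot/M) \subset [M/2,2M]$ and $|\eta|\leq M/8$ on $\operatorname{supp}\wh{P^2_{<M/8}g}$, we have $\xi - \eta \in [3M/8, 17M/8]$, where $\fatp_M^2\equiv 1$; so we may freely insert $\fatp_M^2(\xi-\eta)$ into the integrand. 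The plan is to split $c = c_1 + c_2$ with
\[
c_1 := p(\xi)\bigl[r(\xi)b(\xi-\eta) - b(\xi)r(\xi-\eta)\bigr], \qquad c_2 := [p(\xi)-p(\xi-\eta)]\,b(\xi)\,r(\xi-\eta),
\]
where $p = \psi(\cdot/M)$ is the $P_M$ symbol and $r = \mhi(\cdot/N)$ is the $\pihi$ symbol. This separates the $N$-scale commutator in $c_1$ from the $M$-scale difference in $c_2$; the $c_1$ contribution will produce the $\tfrac{M^2}{N^3}\snorm{\fatp_M^2\pilo^2 h}_{L^2}$ piece, while the $c_2$ contribution will produce the $\snorm{P_M\pihi\pilo h}_{L^2}$ piece.

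For $c_1$, the essential step is the algebraic identity $r\cdot(br'-rb') = -b'$, which follows from $\mhi^2+\mlo^2=1$ (so $\mhi\mhi' = -\mlo\mlo'$). Combining it with the prefactor $a(\xi)b(\xi-\eta) = p(\xi)r(\xi)b(\xi-\eta)$, the leading $\eta$-term of $a(\xi)b(\xi-\eta)\,c_1$ becomes $-\eta\,p^2(\xi)\,b'(\xi)\,b(\xi-\eta)$. For $\xi/N \ll 1$ one has $\mhi \sim (\xi/N)^s$ and $\mlo' = -\mhi\mhi'/\mlo \sim -s(\xi/N)^{2s-1}$, so $|p^2 b'|/|pr| \lesssim (M/N)^{s-1}/N \lesssim M^2/N^3$ whenever $s\geq 3$ and $M\lesssim N$---precisely the advertised gain. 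The $\eta^2$ term in the Taylor expansion of $c_1$ and the higher-order remainder are handled similarly, the extra $\eta$-powers being traded for derivatives of $g_<$ via $\eta^k\hat g_<(\eta) = (-i)^k\wh{g_<^{(k)}}(\eta)$ and using $|\eta|\leq M/8$. For $c_2$, Taylor-expanding $p(\xi)-p(\xi-\eta)=\eta\bar p'(\xi,\eta)$ with $|\bar p'|\lesssim 1/M$, the product $a(\xi)b(\xi-\eta)\,c_2 = \eta\,\bar p'(\xi,\eta)\,(prb)(\xi)\,(rb)(\xi-\eta)$ manifestly produces $\wh{P_M\pihi\pilo f}(\xi)$ and $\wh{\pihi\pilo\fatp_M^2 h}(\xi-\eta)$ (after the $\fatp_M^2$ insertion), yielding the $\snorm{P_M\pihi\pilo h}_{L^2}$ contribution.

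The remaining work is to assemble the pieces using the trilinear Young-type estimate $\iint |F(\xi)\,G(\xi-\eta)\,H(\eta)|\,\dxi\,\deta \leq \snorm{F}_{L^2}\snorm{G}_{L^2}\snorm{H}_{L^1}$, pulling out $\snorm{w}_{L^\infty}$, together with $\snorm{\wh{g_<^{(k)}}}_{L^1} \lesssim M^{k-1}\snorm{P^2_{<M/8}g'}_{H^1}$ (Cauchy--Schwarz plus the support condition $|\eta|\leq M/8$ on $\hat g_<$). The main obstacle is the $M^2/N^3$ gain in the $c_1$ contribution: a direct Taylor estimate only gives $\eta/N \sim M/N$, which is far too weak. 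The requisite smallness emerges only from the hidden cancellation $r(br'-rb') = -b'$ together with the power-$s$ vanishing of $\mhi$ at low frequencies---exactly where the hypothesis $s\geq 3$ from the surrounding section enters. Propagating this identity consistently through the $\eta^2$ correction and Taylor remainder, while verifying that the $h$-side naturally aligns with $\fatp_M^2\pilo^2 h$ in the $c_1$ piece and with $P_M\pihi\pilo h$ in the $c_2$ piece, is the main bookkeeping task.
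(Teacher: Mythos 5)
Your decomposition is genuinely different from the paper's, and as written it has a gap on the $h$-side. The paper simply adds and subtracts $\psi(\tfrac{\xi-\eta}{M})\mhi(\tfrac{\xi-\eta}{N})\,\wh{\pilo^2 h}(\xi-\eta)$ inside the bracket, so that each of the two resulting pieces carries \emph{intact} one of the factors $\wh{\pilo^2 h}(\xi-\eta)$ or $\wh{P_M\pihi\pilo h}(\xi-\eta)$ appearing in the target norms, and the leftover is a pure scalar difference of symbols bounded by the mean value theorem: $|\psi(\tfrac{\xi}{M})\mhi(\tfrac{\xi}{N})-\psi(\tfrac{\xi-\eta}{M})\mhi(\tfrac{\xi-\eta}{N})|\lesssim|\eta|M^{s-1}N^{-s}$ and $|\mlo(\tfrac{\xi-\eta}{N})-\mlo(\tfrac{\xi}{N})|\lesssim|\eta|N^{-1}$. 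No cancellation identity is needed: the gain $M^{s-1}/N^s\leq M^2/N^3$ comes from the product rule together with the amplitude $(M/N)^{s}$ of $\psi(\cdot/M)\mhi(\cdot/N)$ on its support, so your claim that a direct difference estimate "only gives $\eta/N$" is not accurate; writing $c=a(\xi)[b(\xi-\eta)-b(\xi)]+b(\xi)[a(\xi)-a(\xi-\eta)]$ already yields $|c|\lesssim|\eta|M^{s-1}N^{-s}$.

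The genuine gap is this: once you factor out $a(\xi)b(\xi-\eta)$, the only projection left on $\hat h(\xi-\eta)$ is a single power of $b=\mlo(\cdot/N)$, so the commutator symbol $c$ must also supply the missing factor of $\mlo(\tfrac{\xi-\eta}{N})$ or $\psi(\tfrac{\xi-\eta}{M})\mhi(\tfrac{\xi-\eta}{N})$ needed to reconstitute the norms in the statement — and a pointwise bound on $c$ cannot do that. Concretely, Taylor-expanding $c_1$ in $\eta$ evaluates the relevant $r$'s and $b$'s at $\xi$ rather than $\xi-\eta$: your leading term $-\eta\,p^2(\xi)b'(\xi)b(\xi-\eta)$ attaches only one factor $\mlo(\tfrac{\xi-\eta}{N})$ to $\hat h$, and there is no pointwise domination $\mlo\lesssim C\,\mlo^2$ near the transition region $|\xi-\eta|\sim N$, which is reached when $M\sim N$ since $|\xi-\eta|\in[3M/8,17M/8]$. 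So the $c_1$ contribution is not controlled by $\tfrac{M^2}{N^3}\snorm{\fatp_M^2\pilo^2h}_{L^2}$ as asserted; this is not mere bookkeeping. One can rescue it by splitting according to whether $\mlo(\tfrac{\xi-\eta}{N})\gtrsim1$ (route to the first term of the bound) or $\mhi(\tfrac{\xi-\eta}{N})\gtrsim1$ (route to the second), but that lands on the fattened norm $\snorm{\fatp_M^2\pihi\pilo h}_{L^2}$ rather than $\snorm{P_M\pihi\pilo h}_{L^2}$ — the same mismatch your $c_2$ piece already has, since it produces $r(\xi-\eta)b(\xi-\eta)$ without the factor $\psi(\tfrac{\xi-\eta}{M})$. (A smaller issue: your $\eta^k$ terms for $k\geq2$ call for $\snorm{g^{(k)}}$, which must be converted back using $|\eta|\leq M/8$; the paper avoids this by using the fundamental theorem of calculus with a single power of $\eta$.) The paper's two-step exchange produces exactly the stated norms with no case analysis, and I would adopt it.
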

\begin{proof}
	Within the square brackets, we are interchanging a factor of $P_M\pihi$ and $\pilo$ between $f$ and $h$.  We change to Fourier variables and break this maneuver into two steps, first moving $P_M\pihi$ and then moving $\pilo$:
	\begin{align}
	&\begin{multlined}
	\bigg| \iint \big[ \big( \wh{P_M^2 \pihi^2 f} \big)(\xi) \big( \wh{\pilo^2 h} \big)(\xi-\eta) \\
	- \big( \wh{ P_M \pihi \pilo f } \big)(\xi) \big( \wh{ P_M \pihi \pilo h } \big)(\xi-\eta) \big] \big( \wh{ P^2_{< \frac{M}{8}} g } \big)(\eta) w(\xi,\eta) \dxi\deta \bigg| 
	\end{multlined} \nonumber \\
	&\begin{aligned}
	= \iint \big( \wh{ P_M \pihi f } \big) (\xi) \big[ \psi(\tfrac{\xi}{M}) \mhi &(\tfrac{\xi}{N}) - \psi(\tfrac{\xi-\eta}{M}) \mhi(\tfrac{\xi-\eta}{N}) \big] \\ 
	&\times \big( \wh{ \pilo^2 h } \big)(\xi-\eta) \big( \wh{ P^2_{< \frac{M}{8}} g } \big)(\eta) w(\xi,\eta) \dxi\deta 
	\end{aligned}
	\label{eq:comm lem 1}\\
	&\begin{aligned}
	\phantom{={}}+ \iint \big( \wh{ P_M \pihi f } \big) (\xi) &\big[ \mlo(\tfrac{\xi-\eta}{N})- \mlo(\tfrac{\xi}{N}) \big] \\ 
	&\times \big( \wh{ P_M \pihi \pilo h } \big)(\xi-\eta) \big( \wh{ P^2_{< \frac{M}{8}} g } \big)(\eta) w(\xi,\eta) \dxi\deta ,
	\end{aligned}
	\label{eq:comm lem 2}
	\end{align}
	where $\psi$, $\mhi$, and $\mlo$ are the Fourier multipliers for the operators $P_M$, $\pihi$, and $\pilo$ respectively.  Observe that he RHS of the desired inequality vanishes for $M\geq 8N$.  Consequently, we will estimate the terms~\eqref{eq:comm lem 1} and~\eqref{eq:comm lem 2} for $M\leq 4N$ and note that they vanish for $M\geq 8N$.
	
	Observe that the integrand of the first term~\eqref{eq:comm lem 1} is supported in the region $\tfrac{M}{2} \leq |\xi| \leq 2M$, $|\eta|\leq \tfrac{M}{8}$.  On this region we have
	\begin{align*}
	|\xi-\eta| &\geq |\xi| - |\eta| \geq \tfrac{M}{2} - \tfrac{M}{8} \geq \tfrac{M}{4} , \qquad
	|\xi-\eta| \leq |\xi| + |\eta| \leq 2M + \tfrac{M}{8} \leq 4M .
	\end{align*}
	Therefore we can insert $\sum_{K =M/4}^{4M} \psi^2(\tfrac{\xi-\eta}{K})$ into the integrand, which is the Fourier multiplier for the fattened Littlewood--Paley projection $\fatp_M^2 = \sum_{K =M/4}^{4M} P_{K}^2$ applied to $h$.  Now $\fatp_M^2 \pilo^2 h$ vanishes for $M\geq 8N$, and so we may assume $M\leq 4N$.
	
	Next, we will estimate the first term~\eqref{eq:comm lem 1}.  By the fundamental theorem of calculus,
	\begin{align*}
	\big| \psi(\tfrac{\xi}{M}) \mhi(\tfrac{\xi}{N}) - \psi(\tfrac{\xi-\eta}{M}) \mhi(\tfrac{\xi-\eta}{N}) \big|
	&\leq \int_0^1 s|\eta| \big| \big( \psi(\tfrac{\cdot}{M}) \mhi(\tfrac{\cdot}{N}) \big)'(\xi-s\eta) \big| \ds \\
	&\lesssim |\eta| \tfrac{M^{s-1}}{N^s} \quad\tx{for }M\leq N .
	\end{align*}
	In the last inequality, we note that $\psi(\tfrac{\cdot}{M}) \mhi(\tfrac{\cdot}{N})$ is a function with amplitude $M^s/N^s$ supported in an annulus of width $M$; indeed, for $M\leq N$ we have
	\begin{equation*}
	\big| \big( \psi(\tfrac{\xi}{M}) \mhi(\tfrac{\xi}{N}) \big)' \big|
	\leq \big| \psi(\tfrac{\xi}{M})' \mhi(\tfrac{\xi}{N}) \big| + \big| \psi(\tfrac{\xi}{M}) \mhi(\tfrac{\xi}{N})' \big|
	\lesssim M^{-1} \cdot \tfrac{M^s}{N^s} + 1 \cdot \tfrac{M^{s-1}}{N^s} .
	\end{equation*}
	This yields
	\begin{align*}
	|\eqref{eq:comm lem 1}|
	&\lesssim \norm{w}_{L^\infty} \tfrac{M^{s-1}}{N^s} \snorm{ \wh{ P_M \pihi f } }_{L^2} \snorm{ \wh{ P^2_{< \frac{M}{8}} g' } }_{L^1} \snorm{ \wh{ \fatp_M^2 \pilo^2 h } }_{L^2} \\
	&\lesssim \norm{w}_{L^\infty} \tfrac{M^{s-1}}{N^s} \snorm{ P_M \pihi f }_{L^2} \snorm{ P^2_{< \frac{M}{8}} g' }_{H^1} \snorm{ \fatp_M^2 \pilo^2 h }_{L^2} .
	\end{align*}
	In the last inequality, we used Cauchy--Schwarz to estimate
	\begin{equation*}
	\int \big| \big( \wh{ P^2_{< \frac{M}{8}} g' } \big) (\xi) \big| \dxi
	\leq \left( \int \frac{\dd \xi}{\xi^2+1} \right)^{\tfrac{1}{2}} \left( \int (\xi^2+1) \big| \big( \wh{ P^2_{< \frac{M}{8}} g' } \big) (\xi) \big|^2 \dxi \right)^{\tfrac{1}{2}} .
	\end{equation*}
	
	For the second term~\eqref{eq:comm lem 2}, we note that the Fourier support of $\pihi\pilo h$ is bounded by $N$; in particular, $P_M \pihi\pilo h$ vanishes for $M\geq 8N$.  For $M\leq 4N$ we estimate
	\begin{align*}
	\big| \mlo(\tfrac{\xi-\eta}{N}) - \mlo(\tfrac{\xi}{N}) \big|
	&\leq \int_0^1 s|\eta| \big| \big( \mlo(\tfrac{\cdot}{N}) \big)'(\xi-s\eta) \big| \ds
	\lesssim |\eta| N^{-1} .
	\end{align*}
	This yields
	\begin{align*}
	|\eqref{eq:comm lem 2}|
	&\lesssim \norm{w}_{L^\infty} N^{-1} \snorm{ \wh{ P_M \pihi f } }_{L^2} \snorm{ \wh{ P^2_{< \frac{M}{8}} g' } }_{L^1} \snorm{ \wh{ P_M\pihi\pilo h } }_{L^2} \\
	&\lesssim \norm{w}_{L^\infty} \tfrac{M^{s-1}}{N^s} \snorm{ P_M \pihi f }_{L^2} \snorm{ P^2_{< \frac{M}{8}} g' }_{H^1} \snorm{ P_M\pihi\pilo h }_{L^2} .
	\end{align*}
	Combining this with the estimate of~\eqref{eq:comm lem 1}, the claim follows.
\end{proof}

We are now equipped to prove our equicontinuity statement.  Let $Q(N) \subset H^s$ for $N\in 2^{\N}$ be bounded sets of initial data that satisfy
\begin{equation}
Q(M) \supset Q(N) \tx{ for }M\leq N,
\quad\tx{and}\quad
\lim_{N\to\infty}\,\sup_{q(0)\in Q(N)}\, \snorm{ \pihi q(0) }_{H^s} = 0.
\label{eq:equicty hyp}
\end{equation}
\begin{prop}
	\label{thm:equicty}
	Fix an integer $s\geq 3$ and define the corresponding projection operator~\eqref{eq:proj op}.  Given $T>0$ and bounded sets $Q(N)\subset H^s$ of initial data satisfying~\eqref{eq:equicty hyp}, the corresponding solutions $q_\kappa(t)$ to the tidal $H_\kappa$ flow~\eqref{eq:thk} obey
	\begin{equation*}
	\lim_{N\to\infty}\, \sup_{q(0)\in Q(N)}\, \sup_{\kappa \geq N}\ \snorm{ \pihi q_\kappa(t) }_{C_tH^s([-T,T]\times\R)} = 0 .
	\end{equation*}
\end{prop}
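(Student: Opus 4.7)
The plan is to set up the energy quantity
\begin{equation*}
E_N(t) := \tfrac{1}{2}\snorm{\pihi q_\kappa^{(s)}(t)}_{L^2}^2
\end{equation*}
and derive a differential inequality of the form $\ddt E_N \leq C(T,A)\, E_N + o_{N\to\infty}(1)$, uniformly for $q(0)\in Q$ and $\kappa\geq N$. Since $Q$ is equicontinuous in $H^s$ we have $\sup_{q(0)\in Q} E_N(0)\to 0$ as $N\to\infty$, so Gr\"onwall's inequality will then yield $E_N(t)\leq e^{C T}E_N(0) + o_N(1)\to 0$, which is the desired conclusion. Throughout I would tacitly use the \emph{a priori} bounds from \cref{thm:a priori Hs}, so the set of solutions is bounded in $H^s$ on $[-T,T]$ for $\kappa$ large.

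Computing $\ddt E_N$ using the tidal $H_\kappa$ flow~\eqref{eq:thk} and expanding $16\kappa^5 g'(\kappa,q_\kappa+W)$ into its resolvent series in $q_\kappa$ and $W$, I would split the result into (i) the linear-in-$q_\kappa$ and linear-in-$W$ pieces, (ii) the three quadratic pieces $q_\kappa^2$, $q_\kappa W$, $W^2$, and (iii) the cubic-and-higher tail. The linear-in-$q_\kappa$ piece is a total derivative by the symbol identity in~\eqref{eq:linear op id}, exactly as in \cref{thm:a priori Hs}. The $W$-only and $W$-linear contributions can be paired against $\pihi^2 q_\kappa^{(s)}$ whose output frequency is $\gtrsim N$; since $W'$ is Schwartz, these contribute $o_N(1)$ uniformly. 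For the tail~(iii), the analogue of estimate~\eqref{eq:tail conv}, applied after one integration by parts to place $\partial^{s+1}$ onto $\pihi^2 q_\kappa^{(s)}$, gives an $O(\kappa^{-1})$ bound, and hence $O(N^{-1}) = o_N(1)$ in the regime $\kappa\geq N$.

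The main obstacle is the quadratic piece, whose leading order via~\eqref{eq:quad op id} is the tidal KdV nonlinearity $3[(q_\kappa+W)^2]'$, all corrections being $O(\kappa^{-2})$ and so easily absorbed. Thus I must control
\begin{equation*}
\int (\pihi^2 q_\kappa^{(s)}) \cdot \bigl[(q_\kappa+W)(q_\kappa+W)'\bigr]^{(s)} \dx .
\end{equation*}
Dyadically decomposing each factor of $q_\kappa+W$ with $\sum_M P_M$, the high-high interactions yield two genuine factors of $\pihi q_\kappa^{(s)}$ after using the Bernstein inequality~\eqref{eq:bernstein pihi} to trade derivatives for projectors, and are bounded by $E_N + 1$. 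The tricky case is high-low-low, where the output is at frequency $M$ and the input that must carry $\pihi$ sits on the low-frequency factor. Here I would invoke the commutator identity \cref{thm:comm lem}: it transfers one copy of $P_M \pihi$ from $q_\kappa^{(s)}$ onto the other high-frequency factor $h = q_\kappa+W$, producing a symmetric bilinear form in $P_M\pihi\pilo(q_\kappa+W)$ (which, summed over $M\lesssim N$, is controlled by $\pihi q_\kappa^{(s)}$ and $W$ in $H^s$ via~\eqref{eq:bernstein pihi}) plus a remainder of size $M^{s-1}/N^s$ that sums absolutely and with an extra $N^{-1}$ gain. The $W$-containing quadratic terms are handled analogously, with the Schwartz decay of $W'$ providing the $o_N(1)$ pieces.

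Assembling everything gives $\ddt E_N \leq C(T,A)\,E_N + o_N(1)$ uniformly for $|t|\leq T$, $q(0)\in Q$, and $\kappa\geq N$. Gr\"onwall yields $E_N(t) \leq e^{CT} E_N(0) + o_N(1)$ on $[-T,T]$, and equicontinuity of $Q$ in $H^s$ together with the boundedness of $\pihi$ on $L^p$ forces $\sup_{q(0)\in Q} E_N(0)\to 0$ as $N\to\infty$. The principal technical difficulty is the orchestration of the commutator step in the high-low-low regime so that both the symmetric $\pihi$-bilinear main term and the commutator remainder sum absolutely in $M$ with the correct $N$-decay; once that is in hand the argument parallels the energy computation of \cref{thm:a priori Hs}.
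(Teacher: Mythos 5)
Your overall architecture matches the paper's: an energy estimate for $\snorm{\pihi q_\kappa^{(s)}}_{L^2}^2$, extraction of the linear, quadratic, and higher-order tail contributions from the resolvent series, a dyadic high/low decomposition of the quadratic term with \cref{thm:comm lem} handling the high-low-low interaction, and Gr\"onwall plus equicontinuity of $Q$ to close. The linear, $W$-only, and tail pieces are treated essentially as in the paper.

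The gap is in your reduction of the quadratic piece. You claim that via~\eqref{eq:quad op id} the quadratic resolvent term equals $3[(q_\kappa+W)^2]'$ up to corrections that are ``$O(\kappa^{-2})$ and so easily absorbed,'' and you then run the paraproduct/commutator analysis only on the classical product $\int(\pihi^2 q_\kappa^{(s)})[(q_\kappa+W)(q_\kappa+W)']^{(s)}\dx$. This reduction loses derivatives. The corrections in~\eqref{eq:quad op id} are $O(\kappa^{-2})$ only when measured with two extra derivatives on each factor (this is exactly the estimate $\norm{16\kappa^5\langle\del_x,R_0fR_0hR_0\del_x\rangle-3fh}_{L^2}\lesssim\kappa^{-2}\norm{f}_{W^{2,\infty}}\norm{h}_{H^2}$ invoked in \cref{thm:conv low reg}). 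Here the quadratic term must be differentiated $s+1$ times before pairing with $\pihi^2 q_\kappa^{(s)}$, which produces contributions such as $[R_0(2\kappa)q_\kappa''][R_0(2\kappa)\partial^{3}q_\kappa^{(s)}]$; with only $q_\kappa\in H^s$ available these are $O(1)$, not $o(1)$, as $\kappa\to\infty$ (at frequencies $\gg\kappa$ they reconstitute part of $-3q_\kappa^2$), and their high-low-low parts suffer from exactly the same ``single factor of $\pihi q_\kappa^{(s)}$'' defect that motivated introducing $\pihi$ in the first place. So the corrections require the same commutator treatment as the main term and cannot be dismissed. The paper circumvents this by never extracting the classical product: it performs the frequency decomposition and applies \cref{thm:comm lem} directly to the traces $\kappa^5\tr\{(\pihi^2 q_\kappa^{(s)})R_0 q_\kappa^{(j)}R_0 q_\kappa^{(s+1-j)}R_0\}$, evaluating one Fourier integral by residues to exhibit the full $\kappa$-dependent bilinear symbol $w$ and checking $\norm{w}_{L^\infty}=\tfrac{3}{16}(2\pi)^{-1/2}$ uniformly in $\kappa$ --- which is precisely why \cref{thm:comm lem} is stated for a general bounded weight. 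Your argument can be repaired by doing the same, i.e., by carrying the exact resolvent symbol through the commutator step rather than its $\kappa=\infty$ limit.
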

\begin{proof}
	Expanding $g(\kappa,q_\kappa+W)$ in powers of $q_\kappa$ and $W$, we write
	\begin{align}
	&\ddt \left( \snorm{ \pihi q^{(s)}_\kappa }_{L^2}^2 \right)
	\nonumber \\
	&= \int \big( \pihi^2 q^{(s)}_\kappa \big) \big\{ {-16}\kappa^5 \langle\del_x, R_0q_\kappa R_0 \del_x \rangle + 4\kappa^2 q_\kappa \big\}^{(s+1)} \dx
	\label{eq:equicty 1} \\
	&\phantom{={}}+ \int \big( \pihi^2 q^{(s)}_\kappa \big) \big\{ {-16}\kappa^5 \langle\del_x, R_0W R_0 \del_x \rangle + 4\kappa^2 W \big\}^{(s+1)} \dx 
	\label{eq:equicty 2} \\
	&\phantom{={}}+ 16\kappa^5 \int \big( \pihi^2 q^{(s)}_\kappa \big) \langle\del_x, R_0q_\kappa R_0q_\kappa R_0 \del_x \rangle^{(s+1)} \dx
	\label{eq:equicty 3} \\
	&\phantom{={}}+ 16\kappa^5 \int \big( \pihi^2 q^{(s)}_\kappa \big) \big\{ \langle\del_x, ( R_0WR_0q_\kappa R_0 + R_0q_\kappa R_0WR_0 ) \del_x \rangle \big\}^{(s+1)} \dx
	\label{eq:equicty 4} \\
	&\phantom{={}}+ 16\kappa^5 \int \big( \pihi^2 q^{(s)}_\kappa \big) \langle\del_x, R_0W R_0W R_0 \del_x \rangle^{(s+1)} \dx
	\label{eq:equicty 5} \\
	&\begin{aligned}
	\phantom{={}}+ 16\kappa^5 \int \big( \pihi^2 q^{(s)}_\kappa \big) \big\{ &g(\kappa,q_\kappa+W) + \langle \del_x , R_0(q_\kappa+W)R_0 \del_x \rangle \\[-0.5em]
	&- \langle \del_x , R_0(q_\kappa+W)R_0(q_\kappa+W)R_0 \del_x \rangle \big\}^{(s+1)} \dx .
	\end{aligned}
	\label{eq:equicty 6}
	\end{align}
	We will estimate the terms \eqref{eq:equicty 1}--\eqref{eq:equicty 6} separately.
	
	The first linear term \eqref{eq:equicty 1} vanishes.  To see this, we use the first operator identity of~\eqref{eq:linear op id} to write
	\begin{equation*}
	\eqref{eq:equicty 1}
	= \int \big( \pihi^2 q^{(s)}_\kappa \big) \big\{ {-16}\kappa^4 R_0(2\kappa) q_\kappa + 4\kappa^2 q_\kappa \big\}^{(s+1)} \dx 
	= 0 .
	\end{equation*}
	In the last equality we note that the integrand is odd in Fourier variables, or equivalently that the integrand is a total derivative because differentiation commutes with the Fourier multipliers $\pihi$ and $R_0$.
	
	Now we estimate the linear contribution~\eqref{eq:equicty 2} from $W$.  Using the operator identity~\eqref{eq:quad op id}, we write
	\begin{align*}
	|\eqref{eq:equicty 2}|
	&= \left| \int \big( \pihi^2 q^{(s)}_\kappa \big) \big\{ - W^{(s+3)} - R_0(2\kappa) W^{(s+5)} \big\} \dx \right| \\
	&\lesssim \snorm{\pihi q^{(s)}_\kappa}_{L^2} \big( \snorm{ \pihi W^{(s+3)}}_{L^2} + \kappa^{-2} \snorm{ W^{(s+5)} }_{L^2} \big) .
	\intertext{Recalling that $W'$ is Schwartz and $\kappa \geq N$, we obtain}
	&\lesssim \snorm{\pihi q^{(s)}_\kappa}_{L^2} \cdot N^{-2}
	\lesssim \snorm{\pihi q^{(s)}_\kappa}_{L^2}^2 + N^{-4} .
	\end{align*}

	Next, we turn to the first quadratic contribution~\eqref{eq:equicty 3}, which is nonvanishing due to the presence of the frequency cutoff $\pihi^2$.  We write
	\begin{align*}
	|\eqref{eq:equicty 3}|
	&= 16\kappa^5 \left| \tr\big\{ \big( \pihi^2 q^{(s)}_\kappa \big) R_0 \big[ \partial^{s+1} ,  q_\kappa R_0 q_\kappa R_0 \big] \big\} \right| \\
	&\lesssim \sum_{j=0}^{s+1} \kappa^5 \left| \tr\big\{ \big( \pihi^2 q^{(s)}_\kappa \big) R_0 q_\kappa^{(j)} R_0 q_\kappa^{(s+1-j)} R_0 \big\} \right| .
	\end{align*}
	Decomposing the highest order $q_\kappa = \pihi^2 q_\kappa + \pilo^2 q_\kappa$ we have
	\begin{align}
	&|\eqref{eq:equicty 3}|
	\nonumber \\
	&\begin{aligned} 
	\lesssim \sum_{j=0}^{\lfloor \frac{s+1}{2} \rfloor} \kappa^5 \big| &\tr\big\{ \big( \pihi^2 q^{(s)}_\kappa \big) R_0 q_\kappa^{(j)} R_0 \big( \pihi^2 q_\kappa^{(s+1-j)} \big) R_0 \big\}\\[-0.5em]
	&+ \tr\big\{ \big( \pihi^2 q^{(s)}_\kappa \big) R_0 \big( \pihi^2 q_\kappa^{(s+1-j)} \big) R_0 q_\kappa^{(j)} R_0 \big\} \big|
	\end{aligned}
	\label{eq:equicty 7}\\
	&\begin{aligned} 
	\phantom{\lesssim{}} + \sum_{j=0}^{\lfloor \frac{s+1}{2} \rfloor} \kappa^5 \big| &\tr\big\{ \big( \pihi^2 q^{(s)}_\kappa \big) R_0 q_\kappa^{(j)} R_0 \big( \pilo^2 q_\kappa^{(s+1-j)} \big) R_0 \big\} \\[-0.5em]
	&+ \tr\big\{ \big( \pihi^2 q^{(s)}_\kappa \big) R_0 \big( \pilo^2 q_\kappa^{(s+1-j)} \big) R_0 q_\kappa^{(j)} R_0 \big\} \big| .
	\end{aligned}
	\label{eq:equicty 8}
	\end{align}
	
	First we will estimate the high-frequency contribution~\eqref{eq:equicty 7}.  We can ``integrate by parts'' to eliminate the terms with $q_\kappa^{(s+1)}$.  Specifically, by cycling the trace we have
	\begin{align*}
	&\tr\big\{ \big( \pihi^2 q^{(s)}_\kappa \big) R_0 \big( \pihi^2 q_\kappa^{(s+1)} \big) R_0 q_\kappa R_0 \big\} + \tr\big\{ \big( \pihi^2 q^{(s)}_\kappa \big) R_0 q_\kappa R_0 \big( \pihi^2 q_\kappa^{(s+1)} \big) R_0 \big\} \\
	&= \tr\big\{ \big[ \partial, \big( \pihi^2 q^{(s)}_\kappa \big) R_0 \big( \pihi^2 q_\kappa^{(s)} \big) R_0 \big] q_\kappa R_0 \big\} \\
	&= - \tr\big\{ \big( \pihi^2 q^{(s)}_\kappa \big) R_0 \big( \pihi^2 q_\kappa^{(s)} \big) R_0 q'_\kappa R_0 \big\} .
	\end{align*}
	For the remaining terms we use the Hilbert--Schmidt norm estimate~\eqref{eq:hskappa identity 1} and the observation $\norm{f}_{H^{-1}_\kappa} \lesssim \kappa^{-1} \norm{f}_{L^2}$ to put the two highest order terms in $L^2$, and we put the remaining terms in operator norm:
	\begin{equation*}
	|\eqref{eq:equicty 7}| 
	\lesssim \sum_{j=1}^{\lfloor \frac{s+1}{2} \rfloor} \snorm{ \pihi^2 q^{(s)}_\kappa }_{L^2} \snorm{ q_\kappa^{(j)} }_{L^\infty} \snorm{ \pihi^2 q_\kappa^{(s+1-j)} }_{L^2} .
	\end{equation*}
	As $s\geq 3$ then the index $j$ is at most $s-1$, and so the term $\snorm{ q_\kappa^{(j)} }_{L^\infty}$ is uniformly bounded for $|t|\leq T$ and $\kappa \geq \kappa_0$ by the embedding $H^1\hookrightarrow L^\infty$ and the \ti{a priori} estimate of \cref{thm:a priori Hs}.  The remaining term $\snorm{ \pihi^2 q_\kappa^{(s+1-j)} }_{L^2}$ either matches the first factor $\snorm{ \pihi^2 q^{(s)}_\kappa }_{L^2}$ or is $\lesssim N^{-1}$ by the Bernstein inequality~\eqref{eq:bernstein pihi}.  Altogether we conclude
	\begin{equation*}
	|\eqref{eq:equicty 7}|
	\lesssim \snorm{ \pihi^2 q^{(s)}_\kappa }_{L^2}^2 + \snorm{ \pihi^2 q^{(s)}_\kappa }_{L^2} \cdot N^{-1}
	\lesssim \snorm{ \pihi q^{(s)}_\kappa }_{L^2}^2 + N^{-2} .
	\end{equation*}
	
	The low-frequency contribution~\eqref{eq:equicty 8} requires more manipulation.  We will push one factor of $\pihi$ onto the low-frequency term and the resulting frequency cancellation will yield an acceptable contribution.  As $\pihi$ is not a sharp frequency cutoff, we divide the first factor $\pihi^2 q_\kappa^{(s)}$ into its frequency scales:
	\begin{equation}
	\begin{aligned}
	|\eqref{eq:equicty 8}| \lesssim \sum_{M} \sum_{j=0}^{\lfloor \frac{s+1}{2} \rfloor} \kappa^5 \big| &\tr\big\{ \big( P_M^2 \pihi^2 q^{(s)}_\kappa \big) R_0 q_\kappa^{(j)} R_0 \big( \pilo^2 q_\kappa^{(s+1-j)} \big) R_0 \big\} \\[-0.5em]
	&+ \tr\big\{ \big( P_M^2 \pihi^2 q^{(s)}_\kappa \big) R_0 \big( \pilo^2 q_\kappa^{(s+1-j)} \big) R_0 q_\kappa^{(j)} R_0 \big\} \big| .
	\end{aligned}
	\label{eq:equicty 14}
	\end{equation}
	
	Consider the first summand of~RHS\eqref{eq:equicty 14}.  We split $q_\kappa^{(j)} = P^2_{\geq \frac{M}{8}} q_\kappa^{(j)} + P^2_{< \frac{M}{8}} q_\kappa^{(j)}$ into high and low frequencies; the high-frequency contribution can be estimated directly, and for the low-frequency term we trade factors of $P_M\pihi$ and $\pilo$ between $q_\kappa^{(s)}$ and $q_\kappa^{(s+1-j)}$ to create a commutator:
	\begin{align}
	&\kappa^5 \tr\big\{ \big( P_M^2 \pihi^2 q^{(s)}_\kappa \big) R_0 q_\kappa^{(j)} R_0 \big( \pilo^2 q_\kappa^{(s+1-j)} \big) R_0 \big\}
	\nonumber \\
	&= \kappa^5 \tr\big\{ \big( P_M^2 \pihi^2 q^{(s)}_\kappa \big) R_0 \big( P^2_{\geq \frac{M}{8}} q_\kappa^{(j)} \big) R_0 \big( \pilo^2 q_\kappa^{(s+1-j)} \big) R_0 \big\} 
	\label{eq:equicty 9}\\
	&+ \kappa^5 \tr\big\{ \big( P_M \pihi \pilo q^{(s)}_\kappa \big) R_0 \big( P^2_{< \frac{M}{8}} q_\kappa^{(j)} \big) R_0 \big( P_M \pihi \pilo q_\kappa^{(s+1-j)} \big) R_0 \big\}
	\label{eq:equicty 10}\\
	&\begin{aligned}
	{}+ \kappa^5 &\tr\big\{ \big[ \big( \pilo^2 q_\kappa^{(s+1-j)} \big) R_0 \big( P_M^2 \pihi^2 q^{(s)}_\kappa \big) R_0 \\
	&- \big( P_M \pihi \pilo q_\kappa^{(s+1-j)} \big) R_0 \big( P_M \pihi \pilo q^{(s)}_\kappa \big) R_0 \big] \big( P^2_{< \frac{M}{8}} q_\kappa^{(j)} \big) R_0 \big\} .
	\end{aligned}
	\label{eq:equicty 11}
	\end{align}
	
	For the term~\eqref{eq:equicty 9} we put the two highest order terms in $L^2$ and the lowest order term in $L^\infty$.  This yields
	\begin{equation*}
	|\eqref{eq:equicty 9}|
	\lesssim \begin{cases} \min\{ \tfrac{M^s}{N^s} , 1 \} \snorm{ P^2_M \pihi q_\kappa^{(s)} }_{L^2} \cdot M^{-2} \cdot N &\tx{if }j=0 ,\\[0.3em]
	\min\{ \tfrac{M^s}{N^s} , 1 \}  \snorm{ P^2_M \pihi q_\kappa^{(s)} }_{L^2} \cdot M^{-1} \cdot 1 &\tx{if }j\geq 1 .
	\end{cases}
	\end{equation*}
	
	For the term~\eqref{eq:equicty 10}, we can now integrate by parts for the $j=0$ case:
	\begin{align*}
	&\tr\big\{ \big( P_M \pihi \pilo q^{(s)}_\kappa \big) R_0 \big( P^2_{< \frac{M}{8}} q_\kappa \big) R_0 \big( P_M \pihi \pilo q_\kappa^{(s+1)} \big) R_0 \big\}\\
	&\phantom{={}} + \tr\big\{ \big( P_M \pihi \pilo q^{(s)}_\kappa \big) R_0 \big( P_M \pihi \pilo q_\kappa^{(s+1)} \big) R_0 \big( P^2_{< \frac{M}{8}} q_\kappa \big) R_0 \big\} \\
	&= \tr\big\{ \big[ \partial, \big( P_M \pihi \pilo q^{(s)}_\kappa \big) R_0 \big( P_M \pihi \pilo q^{(s)}_\kappa \big) R_0 \big] \big( P^2_{< \frac{M}{8}} q_\kappa \big) R_0 \big\} \\
	&= - \tr\big\{ \big( P_M \pihi \pilo q^{(s)}_\kappa \big) R_0 \big( P_M \pihi \pilo q^{(s)}_\kappa \big) R_0 \big( P^2_{< \frac{M}{8}} q'_\kappa \big) R_0 \big\} ,
	\end{align*}
	which is now the summand for $j=1$.  For $j\geq 1$ we put the two highest order terms in $L^2$ and the lowest order term in $L^\infty$ to obtain
	\begin{equation*}
	|\eqref{eq:equicty 10}|
	\lesssim \begin{cases} \snorm{ P_M \pihi q_\kappa^{(s)} }_{L^2}^2 \cdot 1 &\tx{if }j=1 ,\\[0.3em]
	\snorm{ P_M \pihi q_\kappa^{(s)} }_{L^2} \cdot 1 \cdot N^{-1} \min\{ \tfrac{M^s}{N^s},  1 \} &\tx{if }j\geq 2 .
	\end{cases}
	\end{equation*}

	For the commutator term~\eqref{eq:equicty 11} we will apply the estimate of \cref{thm:comm lem} to the functions $f = q_\kappa^{(s)}$, $g = q_\kappa^{(j)}$, and $h = q_\kappa^{(s+1-j)}$.  Writing the trace as an iterated integral and changing to Fourier variables, we have
	\begin{align*}
	&\begin{aligned}
	\eqref{eq:equicty 11} = \kappa^5 \tr\big\{ \big[ &\big( \pilo^2 h \big) R_0 \big( P_M^2 \pihi^2 f \big) R_0 \\
	&- \big( P_M \pihi \pilo h \big) R_0 \big( P_M \pihi \pilo f \big) R_0 \big] \big( P^2_{< \frac{M}{8}} g \big) R_0 \big\} .
	\end{aligned} \\
	&\begin{aligned}
	= \frac{ \kappa^5 }{ (2\pi)^{\frac{3}{2}} } \iiint &\big[ \big( \wh{ \pilo^2 h} \big)(\xi_1 - \xi_3) \big( \wh{ P_M^2 \pihi^2 f } \big) (\xi_3-\xi_2) \\[-0.5em]
	&- \big( \wh{ P_M \pihi \pilo h } \big) (\xi_1-\xi_3) \big( \wh{ P_M \pihi \pilo f } \big) (\xi_3-\xi_2) \big] \\
	&\times \frac{ \big( \wh{ P^2_{< \frac{M}{8}} g } \big) (\xi_2-\xi_1) }{ (\xi_3^2 + \kappa^2) (\xi_2^2 + \kappa^2) (\xi_1^2 + \kappa^2) } \dxi_1\dxi_2\dxi_3 .
	\end{aligned}
	\end{align*}
	Changing variables $\eta_1 = \xi_2-\xi_1$, $\eta_2 = \xi_3 - \xi_2$, $\eta_3 = \xi_3$, this becomes
	\begin{align*}
	\eqref{eq:equicty 11} = \frac{ \kappa^5 }{ (2\pi)^{\frac{3}{2}} } \iiint &\big[ \big( \wh{ \pilo^2 h} \big)(-\eta_1 - \eta_2) \big( \wh{ P_M^2 \pihi^2 f } \big) (\eta_2) \\
	&- \big( \wh{ P_M \pihi \pilo h } \big) (-\eta_1-\eta_2) \big( \wh{ P_M \pihi \pilo f } \big) (\eta_2) \big] \\
	&\times \frac{ \big( \wh{ P^2_{< \frac{M}{8}} g } \big) (\eta_1) \deta_1\deta_2\deta_3 }{ (\eta_3^2 + \kappa^2) ((\eta_3-\eta_2)^2 + \kappa^2) ((\eta_3-\eta_1-\eta_2)^2 + \kappa^2) }  .
	\end{align*}
	The functions $f$, $g$, and $h$ are now independent of $\eta_3$, and so we may evaluate the $\eta_3$ integral using residue calculus:
	\begin{align*}
	\eqref{eq:equicty 11} = \frac{ \kappa^4 }{ 2 (2\pi)^{\frac{1}{2}} } \iiint &\big[ \big( \wh{ \pilo^2 h} \big)(-\eta_1 - \eta_2) \big( \wh{ P_M^2 \pihi^2 f } \big) (\eta_2) \\
	&- \big( \wh{ P_M \pihi \pilo h } \big) (-\eta_1-\eta_2) \big( \wh{ P_M \pihi \pilo f } \big) (\eta_2) \big] \\
	&\times \frac{ \big( \wh{ P^2_{< \frac{M}{8}} g } \big) (\eta_1) (24\kappa^2 + \eta_1^2 + \eta_2^2 + (\eta_1+\eta_2)^2 ) }{ (\eta_1^2 + 4\kappa^2) (\eta_2^2 + 4\kappa^2) ((\eta_1+\eta_2)^2 + \kappa^2) } \deta_1\deta_2 .
	\end{align*}
	This is now of the form of \cref{thm:comm lem} for the multiplier
	\begin{equation*}
	w(\xi,\eta) = \frac{ \kappa^4 (24\kappa^2 + \eta_1^2 + \eta_2^2 + (\eta_1+\eta_2)^2 ) }{ 2(2\pi)^{\frac{1}{2}} (\eta_1^2 + 4\kappa^2) (\eta_2^2 + 4\kappa^2) ((\eta_1+\eta_2)^2 + \kappa^2) } .
	\end{equation*}
	Moreover, this multiplier is bounded uniformly in $\kappa$:
	\begin{equation*}
	\norm{ w }_{L^\infty} = \tfrac{3}{16} (2\pi)^{-\frac{1}{2}} \quad\tx{for all }\kappa > 0.
	\end{equation*}
	Therefore, by \cref{thm:comm lem} and the Bernstein inequalities~\eqref{eq:bernstein pihi} and~\eqref{eq:bernstein pilo} we have
	\begin{equation*}
	|\eqref{eq:equicty 11}|
	\lesssim \begin{cases}
	\snorm{ P_M\pihi q_\kappa^{(s)} }_{L^2} \snorm{ \fatp_M^2 \pihi q_\kappa^{(s)} }_{L^2} + \snorm{ P_M\pihi q_\kappa^{(s)} }_{L^2}^2 & j=0 ,\\[0.3em]
	\tfrac{M^{s-1}}{N^s} \snorm{P_M \pihi q_\kappa^{(s)} }_{L^2} + N^{-1} \snorm{ P_M\pihi q_\kappa^{(s)} }_{L^2}^2 & j\geq 1 ,
	\end{cases}
	\end{equation*}
	for $M\leq 4N$.  
	
	We repeat the decomposition~\eqref{eq:equicty 9}--\eqref{eq:equicty 11} for the second term in the summand of~RHS\eqref{eq:equicty 14}.  At each step we obtain the same estimates; indeed, although we cannot commute the operators within the trace, we still obtain the same integral because $w$ was symmetric in $\xi$ and $\eta$.
	
	Altogether, we obtain the following estimate of the low-frequency quadratic contribution~\eqref{eq:equicty 8}:
	\begin{equation*}
	|\eqref{eq:equicty 8}|
	\lesssim \sum_M \snorm{ \fatp_M \pihi q_\kappa^{(s)} }_{L^2}^2 + \sum_{M\leq 4N} \tfrac{M}{N^2} + \sum_{M\geq N} \tfrac{1}{M} 
	\lesssim \snorm{ \pihi q_\kappa^{(s)} }_{L^2}^2 + N^{-1} .
	\end{equation*}
	In the last inequality, we noted that the sum of the multipliers in Fourier variables is bounded.
	
	For the quadratic term~\eqref{eq:equicty 4} involving $q_\kappa$ and $W$ we can repeat the decomposition \eqref{eq:equicty 7}--\eqref{eq:equicty 11}.  Previously we put $q_\kappa^{(0)}$ in $L^\infty$ and not $L^2$ since it was the lowest order term, and consequently the same estimates apply because $W\in L^\infty$ and $W'$ is Schwartz.
	
	The quadratic term~\eqref{eq:equicty 5} for $W$ can be estimated directly.  Extracting the leading term as $\kappa\to\infty$, we write
	\begin{align}
	&\eqref{eq:equicty 5} 
	\nonumber \\
	&= \int \big( \pihi^2 q^{(s)}_\kappa \big) (3W^2)^{(s+1)} \dx 
	\label{eq:equicty 15}\\
	&\phantom{={}}+ \int \big( \pihi^2 q^{(s)}_\kappa \big) \big\{ 16\kappa^5 \langle\del_x, R_0W R_0W R_0 \del_x \rangle - 3W^2 \big\}^{(s+1)} \dx .
	\label{eq:equicty 16}
	\end{align}
	For~\eqref{eq:equicty 15} we distribute the $s+1$ derivatives and move one $\pihi$ off of $q_\kappa$:
	\begin{align*}
	|\eqref{eq:equicty 15}|
	&\lesssim \sum_{j=0}^{s+1} \left| \int \big( \pihi q^{(s)}_\kappa \big)\, \pihi\big( W^{(j)} W^{(s+1-j)} \big) \dx \right|\\
	&\lesssim \snorm{\pihi q^{(s)}_\kappa}_{L^2} \cdot N^{-1}
	\lesssim \snorm{\pihi q^{(s)}_\kappa}_{L^2}^2 + N^{-2} .
	\end{align*}
	In the second line we noted that $W^{(j)} W^{(s+1-j)}$ is Schwartz since $W'$ is Schwartz and $W\in L^\infty$ is smooth.  For~\eqref{eq:equicty 16} we use the operator identity~\eqref{eq:quad op id} and the estimates $\snorm{ R_{0}(2 \kappa) \partial^j }_\op \lesssim \kappa^{j-2}$ for $j=0,1,2$ (the estimate for $j = 0$ is also true as an operator on $L^\infty$ by the explicit kernel formula for $R_0$ and Young's inequality) to prove by duality that
	\begin{equation*}
	\norm{ 16\kappa^5 \langle \del_x, R_0(\kappa)fR_0(\kappa)hR_0(\kappa) \del_x \rangle - 3fg }_{L^2}
	\lesssim \kappa^{-2} \norm{ f }_{W^{2,\infty}} \norm{ h }_{H^2} .
	\end{equation*}
	Moreover, the roles of $f$ and $h$ can be exchanged since the identity~\eqref{eq:quad op id} is symmetric in $f$ and $h$.  Distributing the $s+1$ derivatives and recalling $\kappa \geq N$, we estimate
	\begin{equation*}
	|\eqref{eq:equicty 16}|
	\lesssim N^{-2} \snorm{\pihi q^{(s)}_\kappa}_{L^2} \norm{ W }_{W^{s+3,\infty}} \norm{ W' }_{H^{s+3}}
	\lesssim \snorm{\pihi q^{(s)}_\kappa}_{L^2}^2 + N^{-4} .
	\end{equation*}
	
	Finally, we estimate the tail~\eqref{eq:equicty 6} using Cauchy--Schwarz and~\eqref{eq:tail conv}:
	\begin{equation*}
	| \eqref{eq:equicty 6} |
	\lesssim \snorm{\pihi^2 q_\kappa^{(s)}}_{L^2} \cdot o(1)
	\lesssim \snorm{\pihi^2 q_\kappa^{(s)}}_{L^2}^2 + o(1)
	\end{equation*}
	uniformly for $\kappa \geq N$ as $N\to\infty$.  Note that $o(1)$ as $\kappa\to\infty$ implies $o(1)$ as $N\to\infty$ due to the restriction $\kappa \geq N$.

	Altogether, we have shown there exists a constant $C$ such that
	\begin{equation*}
	\left| \ddt \snorm{ \pihi q^{(s)}_\kappa(t) }_{L^2}^2 \right| \leq C \snorm{ \pihi q^{(s)}_\kappa(t) }_{L^2}^2 + o(1) \quad\tx{as }N\to\infty,
	\end{equation*}
	uniformly for $|t|\leq T$, $\kappa\geq N$, and $q(0)\in Q(N)$.  By Gr\"onwall's inequality, we then have
	\begin{equation*}
	\snorm{ \pihi q^{(s)}_\kappa(t) }_{L^2}^2
	\leq e^{CT} \snorm{ \pihi q^{(s)}(0) }_{L^2}^2 + o(1)
	\quad\tx{as }N\to\infty ,
	\end{equation*}
	uniformly for $|t|\leq T$, $\kappa\geq N$, and $q(0)\in Q(N)$.  By~\eqref{eq:equicty hyp}, the term $\snorm{ \pihi q^{(s)}(0) }_{L^2}$ converges to zero as $N\to\infty$ uniformly for $q(0) \in Q(N)$.  Therefore we conclude
	\begin{equation*}
	\sup_{q(0)\in Q(N)}\, \sup_{\kappa \geq N}\ \snorm{ \pihi q_\kappa(t) }_{C_tH^s([-T,T]\times\R)} \to 0 \quad\tx{as }N\to\infty ,
	\end{equation*}
	as desired.
\end{proof}

\section{Well-posedness}
\label{sec:wp}

The goal of this section is to prove our two main results, \cref{thm:intro H-1,thm:intro H3}.  The first step is show that the tidal $H_\kappa$ flows converge in $H^s$ as $\kappa\to\infty$ by combining the low-regularity convergence of \cref{thm:conv low reg} and the uniform Fourier tail control from \cref{thm:equicty}:
\begin{prop}
	\label{thm:conv}
	Fix an integer $s\geq 3$ and $T>0$.  Given bounded sets $Q(\kappa)\subset H^s$ of initial data satisfying~\eqref{eq:equicty hyp}, the corresponding tidal $H_\kappa$ solutions $q_\kappa(t)$ are Cauchy in $C_tH^s([-T,T]\times\R)$ as $\kappa\to\infty$ uniformly for $q(0) \in Q(\kappa)$.
\end{prop}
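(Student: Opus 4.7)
The plan is to combine the low-regularity convergence of \cref{thm:conv low reg} with the equicontinuity estimate of \cref{thm:equicty} through a high--low frequency decomposition, using the operators $\pilo$ and $\pihi$ defined in~\eqref{eq:proj op}.

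Fix $\varepsilon > 0$.  Since $Q$ is bounded and equicontinuous in $H^s$, \cref{thm:equicty} produces some $N_0$ such that for all $N \geq N_0$,
\begin{equation*}
	\sup_{q(0)\in Q}\, \sup_{\kappa \geq N}\, \snorm{ \pihi q_\kappa }_{C_tH^s([-T,T]\times\R)} < \varepsilon .
\end{equation*}
Fix such an $N$.  Because the Fourier multiplier $m_{\tx{lo}}(\cdot/N)$ is supported in $|\xi| \lesssim N$, Plancherel yields a Bernstein-type bound $\snorm{ \pilo f }_{H^s} \leq C_N \snorm{ f }_{H^{-2}}$ for a constant $C_N$ depending only on $N$ and $s$.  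Since $Q$ is bounded in $H^s \subset H^3$, \cref{thm:conv low reg} guarantees that the tidal $H_\kappa$ solutions are Cauchy in $C_tH^{-2}([-T,T]\times\R)$ uniformly for $q(0)\in Q$; consequently there exists $\kappa_0 \geq N$ such that $C_N \snorm{ q_\varkappa - q_\kappa }_{C_tH^{-2}} < \varepsilon$ for all $\varkappa, \kappa \geq \kappa_0$ and all $q(0) \in Q$.

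Combining these bounds with the identity $\pilo^2 + \pihi^2 = 1$ and Plancherel gives
\begin{equation*}
	\snorm{ q_\varkappa - q_\kappa }_{H^s}^2 \leq \snorm{ \pilo (q_\varkappa - q_\kappa) }_{H^s}^2 + \snorm{ \pihi (q_\varkappa - q_\kappa) }_{H^s}^2 \lesssim \varepsilon^2
\end{equation*}
for all $\varkappa, \kappa \geq \kappa_0$ and all $q(0) \in Q$, where the high-frequency term is estimated by the triangle inequality and the choice of $N$.  This is the Cauchy property.  There is no genuine obstacle here: both ingredients are already in hand, and the argument is a standard $\varepsilon/3$-style splitting in frequency, with $N$ chosen first via equicontinuity so that the resulting $N$-dependent Bernstein constant is absorbed by the low-regularity convergence.
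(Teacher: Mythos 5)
Your argument is correct and follows essentially the same route as the paper: split at a frequency $N$, choose $N$ first via the equicontinuity of \cref{thm:equicty} to control the high frequencies of each solution separately, then invoke the $C_tH^{-2}$ Cauchy property of \cref{thm:conv low reg} to absorb the $N$-dependent Bernstein constant on the low frequencies. The only cosmetic difference is that you split exactly via $\pilo^2+\pihi^2=1$ and Plancherel, whereas the paper uses a sharp cutoff at $|\xi|=N$ and then dominates the high-frequency piece by $\Pi_{\geq N}$; both are valid.
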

\begin{proof}
	In the following all spacetime norms will be over the slab $[-T,T]\times \R$.  Splitting at a large frequency $N$ to be chosen, we estimate
	\begin{equation}
	\norm{ q_\varkappa - q_\kappa }_{C_tH^s}^2
	\lesssim (N+1)^{s+2} 	\norm{ q_\varkappa - q_\kappa }_{C_tH^{-2}}^2 + \norm{ q_\varkappa - q_\kappa }_{C_tH^s(|\xi|\geq N)}^2 .
	\label{eq:conv 1}
	\end{equation}
	For the second term we estimate 
	\begin{equation}
	\norm{ q_\varkappa - q_\kappa }_{C_tH^s(|\xi|\geq N)}^2
	\leq 2 \big( \norm{ \Pi_{\geq N} q_\varkappa }_{C_tH^s}^2 + \norm{ \Pi_{\geq N} q_\kappa }_{C_tH^s}^2 \big) .
	\label{eq:conv 2}
	\end{equation}
	Fix $\eps>0$.  First, by \cref{thm:equicty} we take $N = N_0$ sufficiently large to ensure that RHS\eqref{eq:conv 2} is bounded by $\eps/2$ for all $\varkappa,\kappa \geq N_0$.  With $N_0$ fixed, we then use \cref{thm:conv low reg} to pick $\kappa_0 \geq N_0$ so that the first term of RHS\eqref{eq:conv 1} is bounded by $\eps/2$ for all $\varkappa,\kappa \geq \kappa_0$.  Together, we conclude that $\norm{ q_\varkappa - q_\kappa }_{H^s}^2 \leq \eps$ for all $\varkappa,\kappa \geq \kappa_0$.
\end{proof}

Next, we show that the limits guaranteed by \cref{thm:conv} solve tidal KdV:
\begin{prop}
	\label{thm:tkdv exist}
	Fix an integer $s\geq 3$ and $T>0$.  Given initial data $q(0) \in H^s(\R)$, there exists a corresponding solution $q(t)$ to tidal KdV~\eqref{eq:tkdv} in $(C_tH^s\cap C^1_tH^{s-3})([-T,T]\times\R)$.
\end{prop}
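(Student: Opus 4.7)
The plan is to construct $q$ as the $\kappa\to\infty$ limit of the tidal $H_\kappa$ flows and identify it with a tidal KdV solution by passing to the limit in the integral form of the equation.  First, I would apply \cref{thm:conv} to the singleton set $Q = \{q(0)\}$, which is trivially bounded and equicontinuous in $H^s$.  Combined with the global well-posedness of \cref{thm:thk gwp} and the uniform \emph{a priori} bound $\snorm{q_\kappa}_{C_tH^s} \lesssim 1$ from \cref{thm:a priori Hs}, this yields a limit $q \in C_tH^s([-T,T]\times\R)$ of the corresponding tidal $H_\kappa$ flows $q_\kappa(t)$.

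Next, I would pass to the limit in $H^{s-3}$ in the integral form
\[
q_\kappa(t) = q(0) + \int_0^t \bigl[ 16\kappa^5 g'(\kappa, q_\kappa(\tau)+W) + 4\kappa^2(q_\kappa(\tau)+W)' \bigr] \dd\tau
\]
of the tidal $H_\kappa$ flow.  Expanding $g(\kappa,q_\kappa+W)$ in powers of $q_\kappa+W$ and applying the operator identity~\eqref{eq:linear op id}, the linear part of the integrand reduces to $-(q_\kappa+W)''' - R_0(2\kappa)(q_\kappa+W)^{(5)}$, which should converge to $-(q+W)'''$ in $H^{s-3}$: the first summand uses the strong convergence $q_\kappa \to q$ in $C_tH^s$, and the second vanishes pointwise in $\tau$ by dominated convergence applied to the Fourier symbol $|\xi|^5/(\xi^2+4\kappa^2)$.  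The quadratic piece should converge to $6(q+W)(q+W)'$ via~\eqref{eq:quad op id}, mirroring the symmetric bilinear estimate used in the proof of \cref{thm:conv low reg}.  The remaining cubic-or-higher-order tail vanishes in $L^2$ by~\eqref{eq:tail conv}, uniformly for $\snorm{q_\kappa}_{H^s} \lesssim 1$.

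Since the three pieces of the integrand are uniformly bounded in $H^{s-3}$ by a time-independent constant (the linear part thanks to $\bigl||\xi|^5/(\xi^2+4\kappa^2)\bigr| \lesssim |\xi|^3$, the quadratic part via~\eqref{eq:quad op id} and the algebra property of $H^s$, and the tail via~\eqref{eq:tail conv}), dominated convergence in $\tau$ then yields
\[
q(t) = q(0) + \int_0^t \bigl[ -(q(\tau)+W)''' + 6(q(\tau)+W)(q(\tau)+W)' \bigr] \dd\tau
\]
in $H^{s-3}$.  Continuity of the integrand as a map $C_tH^s \to C_tH^{s-3}$ (algebra property for $s\geq 3$ plus differentiation) then gives $q \in C^1_tH^{s-3}([-T,T]\times\R)$ satisfying tidal KdV~\eqref{eq:tkdv}.

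I expect the main obstacle to be the pointwise-in-$\tau$ convergence of the linear scheme error $R_0(2\kappa)(q_\kappa+W)^{(5)}$ to zero in $H^{s-3}$: the associated Fourier multiplier is $O(1)$ rather than $O(\kappa^{-2})$ in operator norm for $s=3$, so uniform convergence in the input fails and we cannot mimic the $H^{-2}$ argument of \cref{thm:conv low reg} in $H^{s-3}$ directly.  Nevertheless, dominated convergence for each fixed $q_\kappa(\tau)+W \in H^s$ (combined with the strong convergence $q_\kappa \to q$ in $C_tH^s$) should suffice to interchange the $\kappa$-limit with the time integral.
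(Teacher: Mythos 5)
Your proposal is correct and follows the paper's strategy almost exactly: take the $\kappa\to\infty$ limit of the tidal $H_\kappa$ flows via \cref{thm:conv} applied to the singleton $\{q(0)\}$, expand $g(\kappa,q_\kappa+W)$ into linear, quadratic, and tail pieces, and identify the limit of the right-hand side with the tidal KdV nonlinearity using \eqref{eq:linear op id}, \eqref{eq:quad op id}, and \eqref{eq:tail conv}. The one substantive difference is the norm in which the limiting equation is identified. The paper observes that the RHS of \eqref{eq:tkdv} evaluated at the limit $q$ is \emph{already known} to lie in $C_tH^{s-3}$, so it suffices to show $\ddt q_\kappa \to \tx{RHS}\eqref{eq:tkdv}$ uniformly in the much weaker norm $C_tH^{-1}$; there the quadratic errors are handled by a clean duality estimate with an $O(\kappa^{-2})$ rate, and only the single fixed term $R_0(2\kappa)\partial^2 q'''$ needs the strong operator convergence $R_0(2\kappa)\partial^2\to 0$ (uniform in $t$ by compactness of $\{q(t):|t|\le T\}$ in $H^s$). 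You instead insist on convergence of the integrand in $H^{s-3}$ itself, which is exactly why you are forced into the dominated-convergence-in-$\tau$ workaround for the linear error and into an upgraded version of the bilinear error estimate (you need it in $H^{s-2}$ before differentiating, not merely the $L^2$ version quoted in \cref{thm:conv low reg}; this extension is routine via the translation identity but should be stated). Both routes close; the paper's choice of the weaker norm simply dissolves the ``main obstacle'' you flag. One small caution: your phrase ``dominated convergence for each fixed $q_\kappa(\tau)+W$'' is not quite meaningful as written since the input depends on $\kappa$ — you must first split $R_0(2\kappa)\partial^2 q_\kappa''' = R_0(2\kappa)\partial^2(q_\kappa-q)''' + R_0(2\kappa)\partial^2 q'''$ and treat the two pieces by uniform operator bounds and strong operator convergence respectively, which your parenthetical about $q_\kappa\to q$ suggests you intend.
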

\begin{proof}
	 In the following all spacetime norms will be taken over the slab $[-T,T]\times\R$.  Applying \cref{thm:conv} to the single function $Q = \{ q(0) \}$, we define $q(t)$ to be $\lim_{\kappa\to\infty} q_\kappa(t)$ which we know exists in $C_tH^s$.  It remains to show that $\ddt q$ is in $C_tH^{s-3}$ and is equal to the RHS of tidal KdV~\eqref{eq:tkdv}.  We already know that the RHS\eqref{eq:tkdv} is in $C_tH^{s-3}$, so it suffices to show that $\ddt q_\kappa$ converges to RHS\eqref{eq:tkdv} in the lower regularity norm $C_tH^{-1}$.
	
	We will extract the linear and quadratic terms of the tidal $H_\kappa$ flow to witness its convergence to tidal KdV.  Using the translation identity~\eqref{eq:g trans prop 2}, we write
	\begin{align}
	&\ddt q_\kappa
	\nonumber \\
	&= - 16\kappa^5 \langle \del_x , R_0 q'_\kappa R_0 \del_x \rangle + 4\kappa^2 q'_\kappa 
	\label{eq:qdot conv 1}\\
	&\phantom{={}}- 16\kappa^5 \langle \del_x , R_0 W' R_0 \del_x \rangle + 4\kappa^2 W'
	\label{eq:qdot conv 2}\\
	&\phantom{={}}+ 16\kappa^5 \langle \del_x , [\partial, R_0 q_\kappa R_0 q_\kappa R_0] \del_x \rangle
	\label{eq:qdot conv 3}\\
	&\phantom{={}}+ 16\kappa^5 \big\{ \langle \del_x , [\partial, R_0 W R_0 q_\kappa R_0] \del_x \rangle +  \langle \del_x , [\partial, R_0 q_\kappa R_0W R_0] \del_x \rangle \big\}
	\label{eq:qdot conv 4}\\
	&\phantom{={}}+ 16\kappa^5 \langle \del_x , [\partial, R_0 W R_0 W R_0] \del_x \rangle
	\label{eq:qdot conv 5}\\
	&\begin{aligned}
	\phantom{={}} + 16\kappa^5 \big\{ g(\kappa,q_\kappa+W) &+ \langle \del_x , R_0(q_\kappa+W)R_0 \del_x \rangle \\
	&- \langle \del_x , R_0(q_\kappa+W)R_0(q_\kappa+W)R_0 \del_x \rangle \big\}' .
	\end{aligned}
	\label{eq:qdot conv 6}
	\end{align}
	We will show that the first five terms \eqref{eq:qdot conv 1}--\eqref{eq:qdot conv 5} converge in $C_tH^{-1}$ to the terms of tidal KdV~\eqref{eq:tkdv}, and the tail~\eqref{eq:qdot conv 6} converges to zero as $\kappa\to\infty$.
	
	We begin with the linear contribution~\eqref{eq:qdot conv 1} from $q_\kappa$.  Using the operator identity~\eqref{eq:linear op id}, we write
	\begin{equation*}
	\eqref{eq:qdot conv 1}
	= -q'''_\kappa  - R_0(2\kappa)\partial^2 (q_\kappa-q)''' - R_0(2\kappa) \partial^2 q''' .
	\end{equation*}
	As $q_\kappa\to q$ in $C_tH^s$, the first term on the RHS converges to $-q'''$ in $C_tH^{s-3}$ and the second term converges to zero in $C_tH^{s-3}$ since $\snorm{ R_0(2\kappa)\partial^2 }_{\op} \lesssim 1$ uniformly in $\kappa$.  The last term converges to zero since the operator $R_0(2\kappa) \partial^2$ is readily seen via Fourier variables to converge strongly to zero as $\kappa\to\infty$.  As the regularity $s-3\geq 0$ is greater than $-1$, we conclude
	\begin{equation*}
	\eqref{eq:qdot conv 1} \to - q''' \quad\tx{in }C_tH^{-1}\tx{ as }\kappa\to\infty.
	\end{equation*}
	
	For the linear contribution~\eqref{eq:qdot conv 2} from $W$, we again use the operator identity~\eqref{eq:linear op id} we write
	\begin{equation*}
	\eqref{eq:qdot conv 2}
	= -W'''  - R_0(2\kappa)\partial^2 W''' .
	\end{equation*}
	As $W'$ is Schwartz and the operator $R_0(2\kappa) \partial^2$ converges strongly to zero as $\kappa\to\infty$, the second term converges to zero in $C_tH^s$ and hence in $C_tH^{-1}$.  Consequently,
	\begin{equation*}
	\eqref{eq:qdot conv 2} \to - W''' \quad\tx{in }C_tH^{-1}\tx{ as }\kappa\to\infty.
	\end{equation*}
	
	Next we turn to the first quadratic term~\eqref{eq:qdot conv 3}.  We write
	\begin{equation*}
	\eqref{eq:qdot conv 3}
	= 6q_\kappa q'_\kappa +  \big\{ 16\kappa^5 \langle \del_x , [\partial, R_0 q_\kappa R_0 q_\kappa R_0] \del_x \rangle - 6q_\kappa q'_\kappa \big\} .
	\end{equation*}
	As $q_\kappa\to q$ in $C_tH^s$, then the first term of the RHS above converges to $6qq'$ in $C_tH^{s-1}$ and hence in $C_tH^{-1}$ as well.  For the second term we estimate in $H^{-1}$ by duality.  For $\phi\in H^1$ we distribute the derivative $[\partial,\cdot]$ using the product rule and use the operator identity~\eqref{eq:quad op id} to obtain
	\begin{align*}
	&\int \big\{ 16\kappa^5 \langle \del_x , [\partial, R_0 q_\kappa R_0 q_\kappa R_0 ] \del_x \rangle - 6q_\kappa q'_\kappa \big\} \phi \dx \\
	&\begin{aligned} = \int \big\{ {-6} [R_0(2\kappa)q''_\kappa][R_0(2\kappa)q'''_\kappa] \phi+ 8\kappa^2 [R_0(2\kappa)q'_\kappa][R_0(2\kappa)q''_\kappa] ( {-5} \phi + R_0(2\kappa) \partial^2 \phi ) & \\
	{}+ 8\kappa^2 [R_0(2\kappa)q_\kappa][R_0(2\kappa)q'_\kappa] (5 \phi'' + 2 R_0(2\kappa) \partial^2 \phi'') \big\} \dx . &\end{aligned}
	\end{align*}
	For each term on the RHS, we put two terms in $L^2$ and the remaining term in $L^\infty$.  For those terms with $\phi''$ we integrate by parts once, we put all factors of $\phi'$ in $L^2$, and we put $\phi$ in $L^\infty \supset H^1$.  We put the highest order $q_\kappa$ term in $L^2$ and the lower order term in $L^2$ or $L^\infty$ as needed.  Using $\snorm{ R_{0}(2 \kappa) \partial^j }_\op \lesssim \kappa^{j-2}$ for $j=0,1,2$ (the estimate for $j = 0$ is also true as an operator on $L^\infty$ by the explicit kernel formula for $R_0$ and Young's inequality), we obtain
	\begin{equation*}
	\left| \int \big\{ 16\kappa^5 \langle \del_x , [\partial, R_0 q_\kappa R_0 q_\kappa R_0 \del_x \rangle - 6q_\kappa q_\kappa \big\} \phi \dx \right|
	\lesssim \kappa^{-2} \norm{\phi}_{H^1} \norm{q_\kappa}_{H^s}^2 .
	\end{equation*}
	Taking a supremum over $\norm{\phi}_{H^1}\leq 1$, we conclude
	\begin{equation*}
	\eqref{eq:qdot conv 3} \to 6qq' \quad\tx{in }C_tH^{-1}\tx{ as }\kappa\to\infty.
	\end{equation*}

	The second quadratic term~\eqref{eq:qdot conv 4} is similar, but now we must put $W$ in $L^\infty$.  First we write
	\begin{align*}
	\eqref{eq:qdot conv 4}
	= 6(W q_\kappa)' +  \big\{ &16\kappa^5 \langle \del_x , [\partial, R_0 W R_0 q_\kappa R_0] \del_x \rangle \\ 
	&+ 16\kappa^5 \langle \del_x , [\partial, R_0 q_\kappa R_0 W R_0] \del_x \rangle - 6(W q_\kappa)' \big\} .
	\end{align*}
	As $q_\kappa \to q$ in $C_tH^s$, the first term of the RHS above converges to $6(Wq)'$ in $C_tH^{s-1}$ and hence in $C_tH^{-1}$ as well.   For the second term we estimate in $H^{-1}$ by duality.  For $\phi\in H^1$ we distribute the derivatives $[\partial,\cdot]$ fusing the product rule and use the operator identity~\eqref{eq:quad op id}.  For the term $\langle \del_x , R_0 W R_0 q'_\kappa R_0 \del_x \rangle$ this yields
	\begin{align*}
	&\int \big\{ 16\kappa^5 \langle \del_x , R_0 W R_0 q'_\kappa R_0 \del_x \rangle - 3W q'_\kappa \big\} \phi \dx \\
	&\begin{aligned}= \int \big\{ {-3} [R_0(2\kappa)W''][R_0(2\kappa)q'''_\kappa] \phi + 4\kappa^2 [R_0(2\kappa)W'][R_0(2\kappa)q''_\kappa] ( {-5} \phi + R_0(2\kappa) \partial^2 \phi ) &\\
	{}+ 4\kappa^2 [R_0(2\kappa)W][R_0(2\kappa)q'_\kappa] (5 \phi'' + 2 R_0(2\kappa) \partial^2 \phi'') \big\} \dx .& \end{aligned}
	\end{align*}
	This equality also holds for the second term $\langle \del_x , R_0 q'_\kappa R_0 W R_0 \del_x \rangle$ because the identity~\eqref{eq:quad op id} is symmetric in $f$ and $h$.  For those terms with $\phi''$ we integrate by parts once to obtain $\phi'$ which we put in $L^2$, we put all factors of $W$ in $L^\infty$, and we put the remaining terms in $L^2$.  This yields
	\begin{equation*}
	\left| \int \big\{ 16\kappa^5 \langle \del_x , R_0 W R_0 q'_\kappa R_0 \del_x \rangle - 3W q'_\kappa \big\} \phi \dx \right|
	\lesssim \kappa^{-2} \norm{\phi}_{H^1} \norm{q_\kappa}_{H^s} ,
	\end{equation*}
	and similarly for the term $\langle \del_x , R_0 q'_\kappa R_0 W R_0 \del_x \rangle$.  The remaining two contributions from $\langle \del_x , R_0 W' R_0 q_\kappa R_0 \del_x \rangle$ and $\langle \del_x , R_0 q_\kappa R_0 W' R_0 \del_x \rangle$ are even easier, since $W'$ is Schwartz and $q_\kappa$ has one less derivative.  Taking a supremum over $\norm{\phi}_{H^1}\leq 1$, we conclude
	\begin{equation*}
	\eqref{eq:qdot conv 4} \to 6(Wq)' \quad\tx{in }C_tH^{-1}\tx{ as }\kappa\to\infty.
	\end{equation*}
	
	The third quadratic term~\eqref{eq:qdot conv 5} is similar.  We write
	\begin{equation*}
	\eqref{eq:qdot conv 5}
	= 6WW' + \big\{ 16\kappa^5 \langle \del_x , [\partial, R_0 W R_0 W R_0] \del_x \rangle - 6WW' \big\} .
	\end{equation*}
	We easily estimate the second term above using the operator identity~\eqref{eq:quad op id} and noting that $W\in L^\infty$ and $W'$ is Schwartz.  This yields
	\begin{equation*}
	\eqref{eq:qdot conv 5} \to 6WW' \quad\tx{in }C_tH^{-1}\tx{ as }\kappa\to\infty.
	\end{equation*}
	
	Lastly, we show that the tail~\eqref{eq:qdot conv 6} converges to zero in $C_tH^{-1}$.   We will estimate in $H^{-1}$ by duality.  For $\phi\in H^1$ we write
	\begin{align*}
	&\left| \int \phi \cdot \eqref{eq:qdot conv 6} \dx \right| \\
	&\leq 16\kappa^5 \sum_{\substack{ \ell\geq 0,\ m_0,\dots,m_\ell \geq 0 \\ \ell+m_0+\dots+m_\ell \geq 3}} \big| \tr \big\{ \phi [\partial, R_0(W R_0)^{m_0} q_\kappa R_0 \cdots q_\kappa R_0 (W R_0)^{m_\ell} ] \big\} \big| .
	\intertext{Recall that we first expanded $g(\kappa,q_\kappa+W)$ in powers of $q_\kappa$, the $\ell$th term having $\ell$-many factors of $q_\kappa R(\kappa,W)$, and then expanded each $R(\kappa,W)$ into a series in $W$ indexed by $m_i$.  The condition $\ell+m_0+\dots+m_\ell \geq 3$ reflects that we have already accounted for all of the summands with one and two $q_\kappa$ or $W$.  We distribute the derivative $[\partial,\cdot]$, use the estimate~\eqref{eq:hskappa identity 1} and the observation $\norm{ f }_{H^{-1}_\kappa} \lesssim \kappa^{-1} \norm{ f }_{L^2}$ to put $\phi$ and all copies of $q_\kappa$ in $L^2$, and then estimate $W$ in operator norm to obtain}
	&\lesssim \kappa^5 \sum_{\substack{ \ell\geq 0,\ m_0,\dots,m_\ell \geq 0 \\ \ell+m_0+\dots+m_\ell \geq 3}} \frac{\norm{\phi}_{L^2}}{\kappa^{3/2}} \left( \frac{\norm{q_\kappa}_{H^{1}}}{\kappa^{3/2}} \right)^\ell \left( \frac{\norm{W}_{W^{1,\infty}}}{\kappa^2} \right)^{m_0 + \dots + m_\ell} .
	\intertext{We first sum over the indices $m_0,\dots,m_\ell \geq 0$ as we did in~\eqref{eq:double sum inner} using that $W \in W^{1,\infty}$, and then we sum over $\ell\geq 1$ since $q_\kappa$ is bounded in $C_tH^s$ for all $\kappa$ large.  In doing so, the condition $\ell+m_0+\dots+m_\ell \geq 3$ guarantees that summing over the two pararenthetical terms yields a gain $\lesssim (\kappa^{-3/2})^3$, from which we conclude}
	&\lesssim \kappa^{-1} \norm{\phi}_{H^1} .
	\end{align*}
	Taking a supremum over $\norm{\phi}_{H^1} \leq 1$ we obtain
	\begin{equation*}
	\eqref{eq:qdot conv 6} \to 0 \quad\tx{in }C_tH^{-1}\tx{ as }\kappa\to\infty. \qedhere
	\end{equation*}
\end{proof}

We now use a classical $L^2$ energy argument to show that we have unconditional uniqueness for  initial data in $H^s$, $s\geq 3$:
\begin{lem}
	\label{thm:tkdv unique}
	Fix $T>0$.  Given an initial data $q(0) \in H^3$, there is at most one corresponding solution to tidal KdV~\eqref{eq:tkdv} in $(C_tH^3 \cap C^1_t L^2)([-T,T]\times\R)$.
\end{lem}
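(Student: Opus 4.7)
The plan is to run a classical $L^2$ energy estimate on the difference of two solutions.  Suppose $q_1,q_2 \in (C_tH^3 \cap C^1_tL^2)([-T,T]\times\R)$ are two solutions to tidal KdV~\eqref{eq:tkdv} with the same initial data, and set $r := q_1 - q_2$.  Subtracting the two equations and using the identity $q_1q_1' - q_2q_2' = \tfrac{1}{2}\big((q_1+q_2)r\big)'$, I would first derive
\begin{equation*}
\partial_t r = -r''' + 3\big((q_1+q_2)r\big)' + 6(Wr)', \qquad r(0) = 0.
\end{equation*}
Because $r\in C_tH^3$ and $\partial_t r \in C_tL^2$, the function $t\mapsto \tfrac{1}{2}\snorm{r(t)}_{L^2}^2$ is $C^1$ in $t$ with derivative $\int r(t,x)\,\partial_t r(t,x)\,dx$.

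Next, I would pair the equation above with $r$ in $L^2$ and integrate by parts term by term.  The dispersive contribution vanishes, $-\int r\,r''' \dx = 0$, after integrating by parts twice and using that $r\in H^3$.  The nonlinear and background terms become
\begin{equation*}
\int r \cdot 3\big((q_1+q_2)r\big)' \dx = \tfrac{3}{2}\int (q_1+q_2)'\, r^2 \dx, \qquad \int r\cdot 6(Wr)' \dx = 3\int W'\, r^2 \dx,
\end{equation*}
after a single integration by parts.  Since $q_1,q_2 \in C_tH^3$, the embedding $H^2\hookrightarrow L^\infty$ gives that $(q_1+q_2)'$ is bounded in $C_tL^\infty([-T,T]\times\R)$, and $W'$ is Schwartz and hence in $L^\infty(\R)$.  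Combining these estimates yields a Gr\"onwall-type bound
\begin{equation*}
\Big| \tfrac{d}{dt} \tfrac{1}{2}\snorm{r(t)}_{L^2}^2 \Big| \lesssim \big( \snorm{q_1(t)}_{H^3} + \snorm{q_2(t)}_{H^3} + \snorm{W'}_{L^\infty} \big)\, \snorm{r(t)}_{L^2}^2 .
\end{equation*}

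Since $r(0) = 0$ and the coefficient $\snorm{q_1(t)}_{H^3} + \snorm{q_2(t)}_{H^3}$ is uniformly bounded on $[-T,T]$, Gr\"onwall's inequality then forces $r(t) \equiv 0$, proving $q_1 \equiv q_2$.  The argument has no real analytic obstacle; the only subtlety is the bookkeeping needed to legitimize differentiating $\tfrac{1}{2}\int r^2 \dx$ in $t$ and justifying the integrations by parts at the assumed regularity.  Both are standard consequences of $r\in C_tH^3 \cap C^1_tL^2$: the time derivative pairs against $r$ in $L^2$ by duality, and the $H^3$ regularity together with $H^3 \hookrightarrow C^2_0$ kills all boundary terms at $\pm\infty$.
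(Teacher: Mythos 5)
Your argument is correct and is essentially identical to the paper's proof: both run the classical $L^2$ energy estimate on the difference of two solutions, integrate by parts to land on the factor $\tfrac{3}{2}(q_1+q_2)' + 3W'$, bound it in $L^\infty$ via Sobolev embedding and the smoothness of $W$, and close with Gr\"onwall. No gaps.
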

\begin{proof}
	Suppose $q(t)$ and $\tilde{q}(t)$ are both in $(C_tH^3 \cap C^1_tL^2)([-T,T]\times\R)$, solve tidal KdV, and have the same initial data $q(0) = \tilde{q}(0)$.  From the differential equation, we see that the difference obeys
	\begin{align*}
	\left| \ddt \int \tfrac{1}{2} (q - \tilde{q})^2\dx \right|
	&= \left| \int (q-\tilde{q}) \{ - (q-\tilde{q})''' + 3(q^2 - \tilde{q}^2)' + [6W(q-\tilde{q})]' \} \dx \right| .
	\intertext{The first term $(q-\tilde{q})'''$ contributes a total derivative and vanishes, while the remaining terms can be integrated by parts to obtain}
	&= \left| \int (q-\tilde{q})^2 \{ \tfrac{3}{2} (q + \tilde{q})' + 3W' \}(t,x) \dx \right| \\
	&\leq \big( \tfrac{3}{2} \norm{q'}_{L^\infty} + \tfrac{3}{2} \norm{\tilde{q}'}_{L^\infty} + 3 \norm{W'}_{L^\infty} \big) \norm{ q-\tilde{q} }_{L^2}^2 .
	\end{align*}
	Estimating $\norm{q'}_{L^\infty} \lesssim \norm{q}_{H^2}$, $\norm{\tilde{q}'}_{L^\infty} \lesssim \norm{\tilde{q}}_{H^2}$ and noting that $W'$ is Schwartz, we conclude that there exists a constant $C$ depending on $W$ and the norm of $q$ and $\tilde{q}$ in $C_tH^3([-T,T]\times\R)$ such that 
	\begin{equation*}
	\left| \ddt \norm{ q(t)-\tilde{q}(t) }_{L^2}^2 \right| \leq C \norm{ q(t) -\tilde{q}(t) }_{L^2}^2 .
	\end{equation*}
	Gr\"onwall's inequality then yields
	\begin{equation*}
	\norm{ q(t) -\tilde{q}(t) }_{L^2}^2 \leq \norm{ q(0) - \tilde{q}(0) }_{L^2}^2 e^{CT}
	\end{equation*}
	uniformly for $|t|\leq T$.  As the RHS vanishes by premise, we conclude that $\tilde{q}(t) = q(t)$ for all $|t|\leq T$.
\end{proof}

We are now ready to prove our two main results.  First, we complete the proof of continuous dependence upon initial data in $H^s$, $s\geq 3$:
\begin{proof}[Proof of \cref{thm:intro H3}]
	Fix an integer $s\geq 3$.  We want to show that tidal KdV~\eqref{eq:tkdv} is globally well-posed for initial data $q(0) \in H^s(\R)$.
	
	Fix $T>0$ and a convergent sequence $q_n(0)$ of initial data in $H^{s}(\R)$.  It suffices to show that the corresponding solutions $q_n(t)$ of tidal KdV~\eqref{eq:tkdv} constructed in \cref{thm:tkdv exist} are Cauchy in $C_tH^{s}([-T,T]\times\R)$ as $n\to\infty$.

	Consider the set $Q := \{ q_n(0) : n\in\N \}$ of initial data, which is bounded and equicontinuous in $H^s$ since it is convergent in $H^s$.  Let $\thk$ denote the Hamiltonian for the tidal $H_\kappa$ flow.  We estimate
	\begin{equation}
	\begin{aligned}
	\norm{ q_n(t) - q_m(t) }_{C_tH^{s}}
	&\leq \snorm{ e^{tJ\nabla\thk}q_n(0) - e^{tJ\nabla\thk}q_m(0) }_{C_tH^{s}} \\
	&\phantom{\leq{}} + 2\, \sup_{q\in Q}\, \sup_{\varkappa\geq \kappa}\ \snorm{ e^{tJ\nabla\thvk} q - e^{tJ\nabla\thk} q }_{C_tH^{s}} ,
	\end{aligned}
	\label{eq:wellposed 1}
	\end{equation}
	where the spacetime norms are over the slab $[-T,T]\times\R$.  By \cref{thm:conv}, the second term of RHS\eqref{eq:wellposed 1} can be made arbitrarily small uniformly in $n,m$ by picking $\kappa$ sufficiently large.  The first term of RHS\eqref{eq:wellposed 1} then converges to zero as $n,m\to\infty$ due to the well-posedness of the tidal $H_\kappa$ flow (cf. \cref{thm:thk gwp}).
\end{proof}

From our understanding of tidal KdV at high-regularity, we are now able to conclude that KdV is well-posed for $H^{-1}(\R)$ perturbations of step-like solutions:
\begin{proof}[Proof of \cref{thm:intro H-1}]
	Let $V(t) = W + q(t)$ be the solution to KdV~\eqref{eq:kdv} corresponding to the tidal KdV solution with initial data $q(0) \equiv 0$.  We want to show that KdV~\eqref{eq:kdv} is globally well-posed for initial data $u(0) \in V(0) + H^{-1}(\R)$.  By our general result~\cite{Laurens2021}, it suffices to show that for every $T>0$ the following conditions are satisfied:
	\begin{enumerate}[label=(\roman*)]
		\item $V$ solves KdV and is bounded in $W^{2, \infty}(\R_{x})$ uniformly for $|t| \leq T$,
		\item The solutions $V_\kappa(t)$ to the $H_{\kappa}$ flows with initial data $V(0)$ are bounded in $W^{4, \infty}(\R_{x})$ uniformly for $|t| \leq T$ and $\kappa$ sufficiently large,
		\item $V_\kappa - V\to 0$ in $W^{2,\infty}(\R_x)$ as $\kappa\to\infty$ uniformly for $|t|\leq T$ and initial data in the set $\{V_\varkappa(t) : |t|\leq T,\ \varkappa \geq \kappa \}$.
	\end{enumerate}

	Fix $T>0$.  As $q(0) \equiv 0$ is in $H^5$, the \ti{a priori} estimate of \cref{thm:a priori Hs} guarantees that the tidal $H_\kappa$ flows $q_\kappa(t)$ are bounded in $C_tH^5([-T,T]\times\R)$ uniformly for $\kappa$ large.  By definition of the tidal $H_\kappa$ flow we have that $V_\kappa(t) = W + q_\kappa(t)$ solves the $H_\kappa$ flow.  Combined with the embedding $H^1\hookrightarrow L^\infty$, this shows that (ii) is satisfied.
	
	By \cref{thm:equicty}, we know that the sets $Q(\kappa) := \{ q_\varkappa(t) : |t|\leq T,\ \varkappa\geq\kappa \}$ obey~\eqref{eq:equicty hyp}.  Therefore, by \cref{thm:conv} we know that $q_\kappa \to q$ in $C_tH^5([-T,T]\times\R)$ as $\kappa\to\infty$ uniformly for initial data in $Q(\kappa)$.  Consequently $V_\kappa(t)$ converges to $V(t) = W + q(t)$ in $C_tW^{4,\infty}([-T,T]\times\R)$, which shows that (iii) is satisfied.  
	
	Finally, by \cref{thm:tkdv exist} we know $q(t)$ is in $C_tH^5([-T,T]\times\R)$ and solves tidal KdV.  Therefore $V(t)$ solves KdV and is in $C_tW^{4,\infty}([-T,T]\times\R)$, which shows that (i) is satisfied.
\end{proof}

Lastly, we record the following reformulation of well-posedness for $H^{-1}(\R)$ perturbations of $W$:
\begin{cor}
	\label{thm:intro H-1 2}
	Fix a sequence of initial data $u_n(0) \in W + H^3(\R)$ with $u_n(0) - W$ convergent in $H^{-1}(\R)$ as $n\to\infty$, and let $u_n(t)$ denote the corresponding solutions to KdV~\eqref{eq:kdv} guaranteed by \cref{thm:intro H3}.  Then there exists a continuous function $u : \R_t \to  W + H^{-1}(\R)$ so that $u_n(t) - u(t) \to 0$ in $H^{-1}(\R)$ as $n\to\infty$ uniformly on bounded time intervals.
\end{cor}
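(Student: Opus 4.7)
The plan is to reduce the corollary to the two main theorems by decomposing each $u_n$ around the fixed background KdV solution $V$. Write $u_n(t) = V(t) + q_n(t)$, where $V(t)$ is the KdV solution with initial data $W$ furnished by \cref{thm:intro H-1}, and where $q_n(0) := u_n(0) - W \in H^3(\R) \subset H^{-1}(\R)$. By hypothesis, $q_n(0)$ converges in $H^{-1}(\R)$ to some limit $q_\infty(0) \in H^{-1}(\R)$.

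First I would check that $q_n(t)$ is the classical $H^3$ solution to the perturbation equation~\eqref{eq:kdv with potential}. By \cref{thm:intro H3}, both $u_n(t) - W$ and $V(t) - W$ lie in $C_t H^s([-T,T]\times\R)$ for every $s \geq 3$ and $T > 0$, and they solve tidal KdV~\eqref{eq:tkdv}. Subtracting the two tidal KdV equations and expanding the nonlinearity, a direct algebraic calculation yields $\ddt q_n = -q_n''' + 6q_n q_n' + 6(Vq_n)'$, i.e.\ \eqref{eq:kdv with potential}. Moreover $q_n \in C_t H^3 \cap C_t^1 L^2$, so by the \emph{a priori} regularity already established for tidal KdV it qualifies as a classical solution in the sense of \cref{thm:intro H-1}.

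Next I would invoke \cref{thm:intro H-1}: the data-to-solution map for~\eqref{eq:kdv with potential} is jointly continuous from $\R_t \times H^{-1}(\R)$ to $H^{-1}(\R)$, and coincides with the classical flow on the dense subset $H^3(\R)$. Let $q(t)$ denote the $H^{-1}(\R)$ solution corresponding to the initial data $q_\infty(0)$. Then for every $T > 0$, the sequence of classical solutions $q_n(t)$ converges to $q(t)$ in $C_t H^{-1}([-T,T]\times\R)$. Finally, define
\begin{equation*}
    u(t) := V(t) + q(t).
\end{equation*}
Since $V : \R \to W + H^\infty(\R)$ is continuous (by \cref{thm:intro H3}) and $q : \R \to H^{-1}(\R)$ is continuous, $u$ is a continuous function $\R \to W + H^{-1}(\R)$, and
\begin{equation*}
    \norm{ u_n(t) - u(t) }_{H^{-1}} = \norm{ q_n(t) - q(t) }_{H^{-1}} \longrightarrow 0
\end{equation*}
uniformly on $[-T,T]$ as $n \to \infty$, which is exactly the claim.

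The only genuine subtlety is the identification at step two: one must verify that the $q_n$ produced by subtracting two high-regularity tidal KdV solutions really is a \emph{classical} solution of~\eqref{eq:kdv with potential} in the precise sense demanded by \cref{thm:intro H-1} (so that uniqueness within the dense $H^3$ class forces agreement with the limit-of-approximations construction). This is a regularity bookkeeping matter rather than a deep issue, and it follows from the $H^s$ theory for tidal KdV established in \cref{sec:thk,sec:wp}.
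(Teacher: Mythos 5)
Your proposal is correct and follows exactly the route the paper intends: decompose $u_n(t) = V(t) + q_n(t)$ about the background solution $V$ with initial data $W$, identify $q_n$ as the classical solution of~\eqref{eq:kdv with potential} (the coincidence with the limit-of-approximations solution being supplied by the uniqueness statement in \cref{thm:intro H-1} via~\cite{Laurens2021}), and conclude by the continuity of the data-to-solution map from \cref{thm:intro H-1} together with the continuity of $t \mapsto V(t) - W$ from \cref{thm:intro H3}. No gaps.
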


\bibliography{kdv_gwp_steplike}

\end{document}